\documentclass{article}

\usepackage{amssymb,latexsym}
\usepackage{amsmath}
\usepackage{amsthm}
\usepackage{graphics}
\usepackage{dsfont}
\usepackage{multicol}
\usepackage{coordsys}
\usepackage{logsys}


\newtheorem{theorem}{Theorem}
\newtheorem{corollary}{Corollary}
\newtheorem{lemma}{Lemma}
\newtheorem{observation}{Observation}

\newtheorem{definition}{Definition}
\newtheorem{example}{Example}
\newtheorem{notation}{Notation}

\numberwithin{equation}{section}

\mathchardef\mhyphen="2D
\newcommand{\hy}{\mhyphen}

\newcommand{\cou}{C_{1}\left(\mathcal{U}\right)}
\newcommand{\coso}[1]{C_{1}\left(#1\right)}
\newcommand{\cnu}{C_{n}\left(\mathcal{U}\right)}
\newcommand{\csu}[1]{C_{{#1}}\left(\mathcal{U}\right)}

\renewcommand{\a}{\mathcal{A}}
\renewcommand{\b}{\mathcal{B}} 
\renewcommand{\k}{\mathcal{K}}
\newcommand{\p}{\mathcal{P}}
\newcommand{\R}{\mathbb{R}}
\newcommand{\Q}{\mathbb{Q}}

\renewcommand{\u}{\mathcal{U}}
\newcommand{\su}{\,$\mathcal{U}$\,\,}
\newcommand{\sa}{\,$\mathcal{A}$\,\,}
\newcommand{\saa}{\,$\mathcal{A}$}
\newcommand{\suu}{\,$\mathcal{U}$}
\newcommand{\uo}{\mathcal{U}_1}
\newcommand{\uoo}{\mathcal{U}_2}
\newcommand{\uooo}{\mathcal{U}_3}

\newcommand{\un}{\mathcal{U}_n}
\newcommand{\uno}{\mathcal{U}_{n+1}}
\newcommand{\ui}{\mathcal{U}_{\infty}}
\newcommand{\us}[1]{\mathcal{U}_{{#1}}}

\newcommand{\vv}{\mathcal{V}}
\newcommand{\sv}{\,$\mathcal{V}$\,}

\newcommand{\h}{\mathcal{H}}
\newcommand{\hn}{\mathcal{H}_n}

\newcommand{\ho}{\mathcal{H}_1}
\newcommand{\hi}{\mathcal{H}_{\infty}}
\newcommand{\hs}[1]{\mathcal{H}_{{#1}}}
\newcommand{\sh}{\,$\mathcal{H}$\,\,}
\newcommand{\shh}{\,$\mathcal{H}$}

\newcommand{\ec}{{E}^{\mathsf{c}}}
\renewcommand{\sc}[1]{{#1}^{\mathsf{c}}}

\newcommand{\ds}[1]{{\displaystyle #1}}
\newcommand{\at}[2][rrrrrrrrrrrrrr]{
                     \begin{array}{#1}
                     #2\\
                     \end{array}}
\newcommand{\defi}[2]{#1= \left\{\at[lllllllllllll]{#2}\right.} 
\newcommand{\define}[5]{\defi{\ds{#1}}{\ds{#2} & : \, \mathrm{\ if\ \,} \ds{#3},  \\[3mm]
                         \ds{#4} & : \, \mathrm{\ if\ \, } \ds{#5}.}}
\newcommand{\mini}[2]{\begin{minipage}{#1\textwidth}
            \begin{flushleft}#2\end{flushleft}\end{minipage}}

\newcommand{\sub}{\subseteq} 

\begin{document}
\title{Minimal Constructible Sets}
\author{Jorge Garcia, Tyler Bongers, Jonathan Detgen, Walter Morales}
\maketitle


\begin{abstract}
   Given an initial family of sets, we may take unions, intersections and complements of the sets contained in this family in order to form a new collection of sets; our construction process is done recursively until we obtain the last family. Problems encountered in this research include the minimum number of steps required to arrive to the last family as well as a characterization of that last family; we solve all those problems. We also define a class of simple families ($n$-minimal constructible) and we analyze the relationships between partitions and separability (our new concept) that lead to interesting results such as finding families based on partitions that generate finite algebras. We prove a number of new results about $n$-minimal constructible families; one major result is that every finite algebra of sets has a generating family which is $n$-minimal constructible for all $n \in \mathds{N}$ and we compute the minimum number of steps required to generate an algebra. Another interesting result is a connection between this construction process and Baire's Theorem. This work has a number of possible applications, particularly in the fields of economics and computer science.
\end{abstract}


\section{Introduction} 
   This paper examines the results of an iterative process of set construction using elementary set operations. Given a family \su of subsets of some universe $X$, we wish to characterize what can be built from \su by successive operations of union, intersection and complement. Interesting problems encountered in this work include finding methods of characterizing constructible sets and simple generating families for a given complicated family. \\
   \indent By examining the construction process and the forms of the families constructed, we have been able to prove several general theorems giving a class of simple generating families for any finite algebra. In particular, we have proven in this paper that every finite algebra has a generating family which is 1-minimal constructible; we have examined other properties of these families, and have determined how many steps are necessary to reach a last family (in which no new sets can be constructed) from our 1-minimal constructible generating families. Connections between this construction process and topology (Baire's Theorem) and measurable sets are presented through some theorems regarding non-constructible sets, specifically, we prove that the rationals can not be constructed from the usual topology for the real numbers.

  The concept of \textit{constructibility} has been studied in a topological sense: a set is constructible if it belongs to the smallest algebra that contains the open intervals, see for example Chevalley's Theorem: under certain conditions the image under a polynomial map of a constructible set is constructible ~\cite{Marker2002}. Equivalently, a set is constructible if and only if it is a finite union of locally closed sets, that is, the intersection of a closed set and an open set (see Lemma~\ref{lmmuinftopology}). Particularly when we start with a topology, in~\cite{Allouche1996}, Allouche defines an operator on a set and concludes that a set is constructible if and only iff there is $n$ such that the defined operator iterated $n$ times on this set produces an empty set, in fact, he provides a canonical decomposition of the set into $n$ locally closed sets, in other words, Allouche tells you how construct this set when the set is in indeed constructible. In a forthcoming paper~\cite{GaBaMoBoPoCe}, we also analyze the decomposition of constructible sets.   When our initial family \su consists of the open sets, this topological definition matches ours, hence our definition is a generalization of the existing one. In fact, we characterize the constructible sets when \su consists of the open sets, when \su consists of the open intervals and when \su consists of a finite partition. Therefore many of our results apply in the topological case.

The organization of the paper is as follows. In Section~\ref{SectionDefinitionsExamples}, we provide the basic definitions and some examples to illustrate future tools and in Section~\ref{SectionElementaryResults} we study elementary results about algebras, we define our construction and ``the last family". These results will be needed through the whole paper. It is in Section~\ref{SectionNonConstructibleAndCharacterization} where we provide examples of sets that are not constructible and we characterize which sets are constructible in different cases, these are when the initial family is the open intervals and when the initial family is the usual topology. In addition, we provide a theorem that contrasts our finding with the characterization of a sigma-algebra. In Section~\ref{SectionSeparabilityAndPartitons}, we state the main concepts that give structure to our main results; particularly, we develop the concept of separability and analyze in depth the relationship between this concept and partitions in our construction process. Many of these results hold even if our universe set is infinite. Here we also define the minimum number of steps required to generate an algebra and we prove an important result of this paper, namely, we compute this number given any finite algebra. Finally in Section~\ref{SectionMinimalConstructibility}, we analyze the concept of $n$-minimal constructibility and we prove two of the main results of this paper pertaining to the obtention of an $n$-minimal constructible family that generates a finite algebra in a minimum number of steps, we also compute this number. In Section~\ref{SectionFutureWork}, we propose future work that extends some results that have been done here including counting certain number of families.  

We would like to remark that often we provide some trivial examples because these examples inspire the definitions that follow them.


\section{Definitions and Examples} 
\label{SectionDefinitionsExamples}

Consider $X$ any nonempty set (our universe) and \,$\u$ a non-empty family of subsets of $X$. Throughout this paper, Script letters such as \suu,\,\sa or \sh will represent collections, or families, of subsets of the universe $X$. The number $|\u|$ will denote the cardinality of \su and the number $n$ will always denote a non-negative integer.

\begin{definition}\label{defnconstruct}
   A subset $A$ of $X$ is 1-constructible or 1-\,$\u$ constructible if and only if there are two elements $E_1, E_2$ in \,$\u$ such that either $A=E_1\cap E_2,\ A=E_1\cup E_2$ or $A=\ec_1.$
   We denote the family of 1-constructible sets by
   \begin{align*}
     \cou=\left\{A\subseteq X\ : \ A \mathrm{\ is \ 1\hy}\,\u \mathrm{\ constructible} \right\}.
   \end{align*}
\end{definition}

Intuitively, $\cou$ is the family of subsets of $X$ that can be constructed from \,$\u$ in one step using either the complement of an element in \,$\u$ or the union or intersection of two elements in \,$\u$.
We denote by $\csu{2}$ the family $\coso{\cou}$ and in general for $n\ge 1,$
    \begin{align*}
     \cnu=\coso{\csu{n-1}}.
   \end{align*}

To shorten notation, we will use indistinctively $\un$ instead of $\cnu$ with the clear understanding that $\us{0}=\csu{0}=\u.$

\begin{definition}\label{defnlastfamily}
  We define the last family $C_\infty(\u)$ or $\us{\infty}$ by
    \begin{align*}
     \us{\infty}=\bigcup_{n=1}^\infty \un.
   \end{align*}
\end{definition}

\begin{definition}\label{defnnsteps}
   We say that a subset $A$ of $X$ is $n$-constructible or $n\hy\u$ constructible if and only if $A \in  \un$.   We also say $A$ is constructible or \,$\u$ constructible if and only if $A \in  \us{\infty}$.
\end{definition}

\begin{example}
  Consider $X = \{1,2,3\}$ and $\mathcal{U} = \{\{1,2\},\{2,3\}\}$. We have that
  \begin{align*}
    \uo &= \{\{1,2\},\{2,3\},X,\{2\},\{1\},\{3\}\} = \u \cup \{\{1\},\{2\},\{3\},X\}, \\
    \uoo &= \{\{1,2\},\{2,3\},X,\{2\},\{1\},\{3\},\{1,3\},\emptyset\} = \uo \cup \{\{1,3\},\emptyset\}, \\
    \uooo &= \{\{1,2\},\{2,3\},X,\{2\},\{1\},\{3\},\{1,3\},\emptyset\} = \uoo, \\
    \ui &= \uooo  = \uoo.
  \end{align*}
\end{example}

Notice that $|\mathcal{U}|=2$ and $|\ui|=2^3.$ It is also important to note that if a set $A$ is in $\ui$, then there is $n\ge 1$ such that $A \in \un$. Thus, $\ui$ is the collection of all sets constructible from $\u $ in a finite sense, i.e., its elements can be constructed  in terms of only finite unions, finite intersections and complements.

\begin{definition}\label{defnAlgebra}
   A family $\mathcal{A}$ of subsets of $X$ is an algebra of sets  if and only if \,$\a$ is closed under complements, finite unions and finite intersections. If the family is also closed under countable unions, it is called a $\sigma-$algebra.
\end{definition}

The following lemma follows immediately from Definitions~\ref{defnlastfamily} and ~\ref{defnAlgebra}.

\begin{lemma}\label{LemmaUiIsAlgebra}
  $\ui$ is an algebra of sets, it contains $\u$ and hence $\ui$ is the algebra generated by $\u$ (the samlles algebra that contains $\u$).
\end{lemma}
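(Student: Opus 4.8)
The plan is to verify the three closure conditions of Definition~\ref{defnAlgebra} directly for $\ui$, to confirm that $\u \subseteq \ui$, and then to show that $\ui$ is contained in every algebra containing $\u$; the last two facts together identify $\ui$ as the generated algebra. The one preliminary fact I would record first is the monotonicity of the construction chain, $\us{0} \subseteq \us{1} \subseteq \us{2} \subseteq \cdots$: for any $E \in \us{n}$ we have $E = E \cap E = E \cup E$, so $E$ is $1\hy\us{n}$ constructible, i.e.\ $E \in \coso{\us{n}} = \uno$. Taking $n = 0$ gives in particular $\u = \us{0} \subseteq \us{1} \subseteq \ui$, so $\ui$ contains $\u$.

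For the closure properties, suppose $A \in \ui$, say $A \in \us{n}$ with $n \geq 1$; then $\sc{A} \in \coso{\us{n}} = \uno \subseteq \ui$. If $A, B \in \ui$, pick $m$ and $n$ with $A \in \us{m}$ and $B \in \us{n}$; by monotonicity both lie in $\us{k}$ for $k = \max\{m,n\}$, whence $A \cup B, A \cap B \in \coso{\us{k}} = \us{k+1} \subseteq \ui$. Finite unions and intersections of more than two members of $\ui$ then follow by induction on the number of sets; and $X, \emptyset \in \ui$ since, $\u$ being nonempty, any $E \in \u$ gives $X = E \cup \sc{E} \in \uoo$ and $\emptyset = \sc{X} \in \uooo$. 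Hence $\ui$ is an algebra of sets containing $\u$.

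It remains to prove minimality. Let $\a$ be any algebra of subsets of $X$ with $\u \subseteq \a$. I would show $\us{n} \subseteq \a$ for every $n \geq 0$ by induction: the base case is the hypothesis $\us{0} = \u \subseteq \a$, and if $\us{n} \subseteq \a$ then each element of $\uno = \coso{\us{n}}$ has the form $E_1 \cap E_2$, $E_1 \cup E_2$, or $\sc{E_1}$ with $E_1, E_2 \in \us{n} \subseteq \a$, hence lies in $\a$ by closure, so $\uno \subseteq \a$. Consequently $\ui = \bigcup_{n \geq 1} \us{n} \subseteq \a$, which together with the previous paragraph shows $\ui$ is the smallest algebra containing $\u$.

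The argument is essentially routine; the one point that must not be skipped is the chain inclusion $\us{n} \subseteq \uno$, since it is precisely what lets two sets produced at different stages be combined, and what makes the union indexed from $n = 1$ still capture $\u$. I do not anticipate any real difficulty beyond that.
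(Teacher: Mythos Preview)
Your proof is correct and is precisely the natural elaboration of what the paper does: the paper offers no detailed argument for this lemma, declaring only that it ``follows immediately from Definitions~\ref{defnlastfamily} and~\ref{defnAlgebra},'' and your verification of the closure properties together with the minimality induction is exactly how one unpacks that claim. The chain inclusion $\us{n}\subseteq\uno$ and the containment in any ambient algebra that you single out are in fact isolated later in the paper as Lemma~\ref{LemmaBasicTripple} and Lemma~\ref{LemmaAlgebraContains}, so your treatment is fully aligned with the paper's approach.
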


  Although we place emphasis on finite construction, we do not require finite families or finite universes. The following two examples will lead to interesting results later on in this paper. In the first example we consider \,$\u$ a partition of $X$ and in the second one we consider an infinite family \,$\u.$

\begin{example}
  Consider $X = \{1,2,3,4,5,6\}$ and $\u  = \{\{1\},\{2,5\},\{3,4\},\{6\}\}$. Then we have
  \begin{align*}
    \u &= \{\{1\},\{2,5\},\{3,4\},\{6\}\},   \\
    \uo  &= \u  \cup \{\{1,2,5\},\{1,3,4\},\{1,6\},\{2,3,4,5\},\{2,5,6\}, \\
    								& \{3,4,6\},\emptyset\,\{2,3,4,5,6\}, \{1,3,4,6\},\{1,2,5,6\}, \{1,2,3,4,5\}\}, \\
    \ui &=\uoo= \uo  \cup \{X\}.
  \end{align*}
  Notice that every set in $\mathcal{U}_{\infty}$ consists of unions of sets in the partition; In this paper, we will show that this holds in general when $\u$ forms a partition. Note also that, since $\mathcal{U}_{\infty}$ consists only of  unions of elements in $\u $, we have $\left|\mathcal{U}_{\infty}\right| = 2^{\left|\u \right|}$.
\end{example}
%


\begin{example}
   Consider the family of all closed intervals \ $\u = \left\{
   \left[a_{1}, b_{1} \right]: a_{1}, b_{1} \in  \R  \right\}$ and $X= \R $ as the
   universe. Here
   \begin{eqnarray*}
      \u_{1} & =  & \u \cup \left\{\emptyset, \left( -
        \infty, a_{1} \right) \cup \left( b_{1}, \infty \right), \left[ a_{2},
        b_{2} \right] \cup \left[ a_{3}, b_{3} \right] : a_{i}, b_{i} \in
         \R  \right\}\\
      \u_{2} & = & \u_{1} \cup \{\R, \, \left[a_{1}, b_{1}
        \right), \left( a_{2}, b_{2} \right], \left[ a_{3}, b_{3} \right) \cup
        \left( a_{4}, b_{4} \right], \left(a_{5}, b_{5} \right] \cup
        \left[a_{6}, b_{6} \right),\\
      & {} & \left[a_{7}, b_{7} \right] \cup
        \left[a_{8}, b_{8} \right), \left(a_{9}, b_{9} \right] \cup  \left[a_{10}, b_{10} \right],
        \left(a_{11}, b_{11} \right] \cup \left[ a_{12}, b_{12} \right] \cup
        \left[a_{13}, b_{13} \right), \\
      & {} &  \left( - \infty, a_{14} \right) \cup
        \left( a_{15}, b_{15} \right) \cup \left( b_{16}, \infty \right),
        \left( - \infty, b_{17} \right) \cup \left[ a_{18}, \infty
        \right), \\
      & {} & \left( - \infty, b_{19} \right] \cup \left( a_{20}, \infty
        \right), \left( - \infty, b_{21} \right) \cup \left[a_{22}, b_{22}
        \right] \cup \left( a_{23}, \infty \right) \ \, : a_{i}, b_{i} \in  \R  \}.
   \end{eqnarray*}
   It follows that $\u_{\infty}$ consists of all finite unions of all types of intervals (closed, open, half-open, half-closed even unbounded intervals).
\end{example}

Thus, the construction process can begin with relatively simple families and generate very complicated examples quickly. In the next section, we prove some elementary lemmas necessary to consider the more complicated cases encountered in this field.


\section{Elementary Results} 
\label{SectionElementaryResults}

In this section, we will present some basic results pertaining to set constructions; This will provide the necessary groundwork for the major results of this paper. For definitions of topology, closed sets, open sets, closure and interior refer to~\cite{RoydenFitzpatrick2010}.

\begin{lemma}\label{LemmaBasicTripple}
  If \su and \sh are two families of subsets of $X$ and $n\ge 0$, then
  \begin{itemize}
    \item $\un\subseteq\uno.$
    \item If  \,$\h \subseteq \u$\, then \,$\hn\sub\un$\, and hence \,$\hi\sub\ui$.
    \item $(\u\cup\h)_n\supseteq\un\cup\hn$\, and \,\,$(\u\cap\h)_n\subseteq\un\cap\hn$.
  \end{itemize}
\end{lemma}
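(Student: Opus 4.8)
The plan is to prove each of the three bullet points by induction on $n$, using the single-step operator $C_1(\cdot)$ as the base case and inductive engine. In each case the base case $n=0$ is either trivial or follows directly from the definitions, and the inductive step reduces to understanding how $C_1$ interacts with the relevant set-theoretic relation.

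For the first bullet, $\un \subseteq \uno$, I would first establish the one-step fact that $\mathcal{H} \subseteq C_1(\mathcal{H})$ for any family $\mathcal{H}$: given $A \in \mathcal{H}$, write $A = A \cap A$, so $A$ is $1$-$\mathcal{H}$ constructible. Applying this with $\mathcal{H} = \un$ gives $\un \subseteq C_1(\un) = \uno$ immediately, so in fact no induction is even needed here once the one-step observation is in hand. For the second bullet, I would show by induction on $n$ that $\mathcal{H} \subseteq \mathcal{U}$ implies $\hn \subseteq \un$. The case $n=0$ is the hypothesis itself. For the inductive step, assume $\hs{n-1} \subseteq \us{n-1}$; then any set constructed in one step from two elements of $\hs{n-1}$ (via union, intersection, or complement) is constructed in one step from two elements of $\us{n-1}$, so $C_1(\hs{n-1}) \subseteq C_1(\us{n-1})$, i.e. $\hn \subseteq \un$. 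The consequence $\hi \subseteq \ui$ then follows by taking unions over $n$ in Definition~\ref{defnlastfamily}. The third bullet splits into two halves. For $(\u \cup \h)_n \supseteq \un \cup \hn$: since $\u \subseteq \u \cup \h$ and $\h \subseteq \u \cup \h$, the second bullet gives $\un \subseteq (\u\cup\h)_n$ and $\hn \subseteq (\u\cup\h)_n$, hence their union is contained in $(\u\cup\h)_n$. For $(\u \cap \h)_n \subseteq \un \cap \hn$: since $\u \cap \h \subseteq \u$ and $\u \cap \h \subseteq \h$, the second bullet gives $(\u\cap\h)_n \subseteq \un$ and $(\u\cap\h)_n \subseteq \hn$, hence $(\u\cap\h)_n \subseteq \un \cap \hn$.

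So the main structural tool is the monotonicity of the $C_1$ operator with respect to inclusion of families, together with the reflexivity observation $\mathcal{H} \subseteq C_1(\mathcal{H})$; once these are recorded, all three bullets follow and the third is a pure formal consequence of the second. I do not anticipate a genuine obstacle here: the only point requiring the slightest care is making sure the induction for the second bullet is set up at the level of families (not individual sets), since a set $A \in \hn$ need not lie in $\hs{n-1}$, so one must argue that its two ``parents'' lie in $\hs{n-1} \subseteq \us{n-1}$ and then reconstruct $A$ by the same operation inside $\un$. Everything else is routine unwinding of Definitions~\ref{defnconstruct}, \ref{defnlastfamily}, and the recursive definition of $\cnu$.
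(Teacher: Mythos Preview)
Your proposal is correct and follows essentially the same approach as the paper: the first bullet via $A=A\cap A$ (the paper uses $A=A\cup A$, an immaterial difference), the second bullet via monotonicity of $C_1$ under inclusion (which the paper states more informally as ``obvious''), and the third bullet by applying the second to the inclusions $\u,\h\subseteq \u\cup\h$ and $\u\cap\h\subseteq \u,\h$. Your write-up is in fact slightly more explicit about the induction in the second bullet than the paper's own proof.
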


\begin{proof} 
  Clearly if $A\in \un$ then $A=A\cup A,$ hence $A\in \uno.$ The second part is obvious as anything that can be constructed from \sh in $n$ steps can also be constructed from \su in $n$ steps because \sh is part of \suu, hence \,$\hn\sub\un$. Taking unions in this collection of sets over all $n$ gives \,$\hi\sub\ui.$ Finally, applying the second part to the collections \,$\u\sub \u\cup\h,\ \h\sub \u\cup \h$ \,  \, \,$\h\cap\u\sub\h,\ \,\h\cap\u\sub\u$ and taking the corresponding operation gives the desired result.
\end{proof}

 We recall that every $\sigma$-algebra is an algebra (see \cite{RoydenFitzpatrick2010}, Section 4, Chapter 1).   The following two lemmas are immediate consequences of Lemma~\ref{LemmaBasicTripple} and Definition~\ref{defnAlgebra}.

\begin{lemma}\label{LemmaAlgebraPreserved}
  If $\a$ is an algebra of sets of $X$ then $\coso{\a}=\a.$
\end{lemma}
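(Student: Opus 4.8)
The plan is to show directly that $\coso{\a} = \a$ by establishing two inclusions. For the inclusion $\a \subseteq \coso{\a}$, I would invoke the first bullet of Lemma~\ref{LemmaBasicTripple} with $n = 0$: since $\a_0 = \a$ and $\a_0 \subseteq \a_1 = \coso{\a}$, every set in $\a$ is $1$-constructible from $\a$ (concretely, any $A \in \a$ equals $A \cup A$, which already appeared implicitly in the proof of that lemma). For the reverse inclusion $\coso{\a} \subseteq \a$, I would take an arbitrary $A \in \coso{\a}$ and unpack Definition~\ref{defnconstruct}: there exist $E_1, E_2 \in \a$ with $A = E_1 \cap E_2$, $A = E_1 \cup E_2$, or $A = \sc{E_1}$. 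In each of the three cases, $A \in \a$ because $\a$ is closed under finite intersections, finite unions, and complements respectively (Definition~\ref{defnAlgebra}).

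Combining the two inclusions yields $\coso{\a} = \a$. There is essentially no obstacle here; the lemma is a direct consequence of the definition of an algebra matching the three operations allowed in one construction step, and the only thing worth stating carefully is that the definition of $1$-constructibility uses exactly the three closure properties that define an algebra, with no additional operations. I would keep the write-up to a few lines, treating the three cases in the backward direction and citing Lemma~\ref{LemmaBasicTripple} for the forward direction.

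A remark worth including: this lemma, together with Lemma~\ref{LemmaUiIsAlgebra}, immediately gives that $\a_\infty = \a$ for any algebra $\a$, since iterating $\coso{\cdot}$ never leaves $\a$; this is presumably the point of stating it, so I would flag that consequence even though a formal proof of it may be deferred to the next lemma in the paper.
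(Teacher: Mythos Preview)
Your proposal is correct and matches the paper's approach exactly: the paper states that this lemma is an immediate consequence of Lemma~\ref{LemmaBasicTripple} and Definition~\ref{defnAlgebra}, which is precisely the forward inclusion via $\a_0 \subseteq \a_1$ and the reverse inclusion via the three closure properties of an algebra that you spelled out. Your added remark about $\a_\infty = \a$ is a reasonable observation but is not part of the paper's treatment of this lemma.
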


\begin{lemma}\label{LemmaAlgebraContains}
  If $\a$ is an algebra of sets with $\a\supseteq\u$ then $\a\supseteq\cnu.$
\end{lemma}

We will now prove two lemmas that will be useful.

\begin{lemma}\label{LemmaUnIsFinite}
  Suppose that \,$\u$ is finite. Then \,$\uo$ has at most $|\u|^2+|\u|$ elements, hence for any $n, \ \un$ is finite. If in addition, there is $n_0$ with $|\us{n_0}|=|\us{n_0+1}|$ then \,$\us{n_0}$ is an algebra of sets and \,$\ui=\us{n_0}$.
\end{lemma}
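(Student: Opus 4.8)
The plan is to prove the three assertions in order, each building on the previous. For the first bound, I would count: every element of $\uo$ is obtained from $\u$ by a single operation, and there are three kinds — complements $\ec$ for $E \in \u$ (at most $|\u|$ of these), unions $E_1 \cup E_2$ and intersections $E_1 \cap E_2$ for $E_1, E_2 \in \u$. Naively the pairs give $2|\u|^2$, but since $E_1 \cup E_1 = E_1 \cap E_1 = E_1$ and the operations are symmetric, a cleaner count is: unions contribute at most $\binom{|\u|}{2}$ new sets beyond $\u$ itself, intersections at most $\binom{|\u|}{2}$, complements at most $|\u|$, and $\u$ itself has $|\u|$ sets; adding these gives at most $|\u|^2 + |\u|$ (in fact the stated bound is generous, which is fine). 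Then $\un$ finite follows by induction on $n$: if $\us{n-1}$ is finite, the same bound with $|\us{n-1}|$ in place of $|\u|$ shows $\un = \coso{\us{n-1}}$ is finite.

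**Next I would handle the algebra claim.** Suppose $|\us{n_0}| = |\us{n_0+1}|$. By Lemma~\ref{LemmaBasicTripple} we always have $\us{n_0} \subseteq \us{n_0+1}$, and since both are finite with equal cardinality, equality holds: $\us{n_0} = \us{n_0+1} = \coso{\us{n_0}}$. I claim this forces $\us{n_0}$ to be an algebra. Indeed, for any $A \in \us{n_0}$ we have $\ac \in \coso{\us{n_0}} = \us{n_0}$, so $\us{n_0}$ is closed under complement; similarly for any $A, B \in \us{n_0}$, both $A \cup B$ and $A \cap B$ lie in $\coso{\us{n_0}} = \us{n_0}$, so $\us{n_0}$ is closed under pairwise (hence, by induction, finite) unions and intersections. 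That is exactly Definition~\ref{defnAlgebra}.

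**Finally, for $\ui = \us{n_0}$,** I would first show by induction that $\us{n_0} = \us{n_0 + k}$ for every $k \ge 0$: the base case is trivial, and if $\us{n_0} = \us{n_0+k}$ then $\us{n_0+k+1} = \coso{\us{n_0+k}} = \coso{\us{n_0}} = \us{n_0}$. Combined with the monotonicity $\us{m} \subseteq \us{m+1}$ from Lemma~\ref{LemmaBasicTripple}, this gives $\us{m} \subseteq \us{n_0}$ for all $m$ (for $m \le n_0$ by monotonicity up to $n_0$, for $m \ge n_0$ by the equality just proved). Hence $\ui = \bigcup_{n \ge 1} \un \subseteq \us{n_0}$, and the reverse inclusion $\us{n_0} \subseteq \ui$ is immediate from Definition~\ref{defnlastfamily}. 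Alternatively, and more slickly, once $\us{n_0}$ is known to be an algebra containing $\u$, Lemma~\ref{LemmaAlgebraContains} gives $\us{n_0} \supseteq \cnu$ for all $n$, hence $\us{n_0} \supseteq \ui$, while $\us{n_0} \subseteq \ui$ is definitional; I would probably use this route to avoid the extra induction.

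**The main obstacle** is not really a deep one — it is just being careful that the counting argument in the first part is airtight (making sure the $|\u|^2 + |\u|$ bound genuinely absorbs all cases including the degenerate pairs and the original sets), and that the "equal finite cardinality plus containment implies equality" step is explicitly invoked, since everything downstream hinges on $\coso{\us{n_0}} = \us{n_0}$. Once that fixed-point equality is in hand, the algebra property and the identification with $\ui$ are short formal consequences of the lemmas already available.
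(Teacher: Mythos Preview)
Your proposal is correct and matches the paper's proof in all essentials: the counting argument for the bound $|\u|^2+|\u|$, the induction for finiteness of $\un$, and the stabilization argument from $\us{n_0}\subseteq\us{n_0+1}$ plus equal finite cardinality. The only cosmetic difference is the order of the last two conclusions: the paper first deduces $\ui=\us{n_0}$ from stabilization and then invokes Lemma~\ref{LemmaUiIsAlgebra} to get that $\us{n_0}$ is an algebra, whereas you verify the algebra axioms directly from $\coso{\us{n_0}}=\us{n_0}$ and then establish $\ui=\us{n_0}$; both routes are equally short and valid.
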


\begin{proof} 
   If \,$\u$ has $N$ elements, to build $\uo$ we need $\binom{N}{2}+N$ unions (of two elements in \,$\u$), $\binom{N}{2}$ intersections and $N$ complements. This gives a total of at most $2N+2\binom{N}{2}=N^2+N$ elements in $\uo.$ An inductive argument shows that $\un$ is finite. Now, consider $n_0$ with $|\us{n_0}|=|\us{n_0+1}|$. By Lemma \ref{LemmaBasicTripple}, $\us{n_0}\sub\us{n_0+1}.$ Then, \ $\us{n_0}=\us{n_0+1}$ and hence for all $n\ge n_0, \ \un=\us{n_0},$ hence \,$\ui=\us{n_0}$. By Lemma~\ref{LemmaUiIsAlgebra}, \ $\ui$ is an algebra of sets.
\end{proof}

\begin{lemma}\label{LemmaUiPreserved}
  $\coso{\ui}=\ui$.  If \,$\u$ is finite, then $\ui$ is a finite topology, an algebra and also a $\sigma$-algebra.
\end{lemma}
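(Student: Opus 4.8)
The plan is to prove the two assertions of Lemma~\ref{LemmaUiPreserved} separately, each by reducing to a result already established in the excerpt.

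For the first assertion, $\coso{\ui}=\ui$, I would first note that by Lemma~\ref{LemmaBasicTripple} (first bullet, applied with the family $\ui$ in place of $\u$), we have $\ui\sub\coso{\ui}$, so only the reverse inclusion requires work. For that, I would invoke Lemma~\ref{LemmaUiIsAlgebra}, which tells us that $\ui$ is an algebra of sets; then Lemma~\ref{LemmaAlgebraPreserved} applied to $\a=\ui$ gives $\coso{\ui}=\ui$ directly, which in fact yields both inclusions at once. So the first statement is essentially an immediate corollary of the two cited lemmas, and no genuine obstacle arises.

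For the second assertion, assume $\u$ is finite. The key point is that the chain $\u=\us{0}\sub\us{1}\sub\us{2}\sub\cdots$ is a nondecreasing sequence of subsets of the (finite) power set $\p(X)$ — here I would remark that each $\un$ is finite by Lemma~\ref{LemmaUnIsFinite}, and more importantly each $\un$ is a family of subsets of a finite universe, since a finite family of subsets of $X$ can only involve finitely many points of $X$, so without loss of generality $X$ itself is finite and $\p(X)$ is finite. A nondecreasing sequence of subsets of a finite set must stabilize: there is $n_0$ with $|\us{n_0}|=|\us{n_0+1}|$. By Lemma~\ref{LemmaUnIsFinite}, this forces $\us{n_0}$ to be an algebra with $\ui=\us{n_0}$. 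Being a finite algebra of subsets of a finite set, $\ui$ is closed under all unions (every union being a finite one), hence it is a topology and a $\sigma$-algebra; and it is finite.

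I expect the only subtlety to be the justification that the sequence $(\un)$ stabilizes, i.e.\ that finiteness of $\u$ lets us assume $X$ is finite (or at least that all the $\un$ live inside a common finite Boolean algebra of sets); once that is in place, everything else is a direct appeal to Lemmas~\ref{LemmaUnIsFinite} and~\ref{LemmaUiIsAlgebra}. A clean way to phrase it is to let $Y=\bigcup_{U\in\u}U$ — or even $Y=X$ if one prefers to keep the ambient universe — and observe that every element of every $\un$ is either a subset of $Y$ or the complement of such a subset, so there are at most $2^{|Y|+1}$ possible members in total; hence the nondecreasing chain of the $|\un|$ cannot strictly increase forever, giving the desired $n_0$.
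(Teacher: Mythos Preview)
Your treatment of the first assertion is correct and matches the paper exactly: both simply invoke Lemma~\ref{LemmaUiIsAlgebra} together with Lemma~\ref{LemmaAlgebraPreserved}.

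For the second assertion there is a concrete slip in precisely the step you yourself flag as the only subtlety. You write that ``a finite family of subsets of $X$ can only involve finitely many points of $X$'', and you then bound the number of possible sets by $2^{|Y|+1}$ with $Y=\bigcup_{U\in\u}U$. But finiteness of \su says nothing about the cardinality of the \emph{sets} in \suu: with $X=\R$ and $\u=\{(0,1)\}$ your $Y$ is uncountable and the bound is vacuous. Your observation that every element of every $\un$ is a subset of $Y$ or the complement of one is perfectly true; it just does not give a finite count. The correct uniform bound is $2^{2^{|\u|}}$: if $\u=\{E_1,\dots,E_n\}$, the sets $\bigcap_{i\in I}E_i\cap\bigcap_{i\notin I}\sc{E_i}$ (over $I\sub\{1,\dots,n\}$) partition $X$ into at most $2^n$ blocks, and every member of every $\un$ is a union of such blocks. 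This is exactly the ``common finite Boolean algebra'' you mention parenthetically but do not actually construct.

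It is worth noting that the paper's own proof does not go through your stabilization route at all: it simply asserts that $\ui$ is finite ``with at most $2^{|X|}$ elements'' and then observes that any countable or arbitrary union in a finite family reduces to a finite union. That bound $2^{|X|}$ has the very same defect as yours when $X$ is infinite, so the paper is in effect tacitly treating $X$ as finite here. Your detour through Lemma~\ref{LemmaUnIsFinite} is a bit more elaborate but neither better nor worse on this point; once the $2^{2^{|\u|}}$ bound is inserted, either argument goes through for arbitrary $X$.
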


\begin{proof} 
   $\coso{\ui}=\ui$ comes from Lemma~\ref{LemmaAlgebraPreserved} and Lemma~\ref{LemmaUiIsAlgebra}.
   If \,$\u$ is finite, then $\ui$ is also finite with at most $2^{|X|}$ elements. Therefore any countable (or arbitrary) union of elements in $\ui$ really becomes a finite union of elements in $\ui$, therefore $\ui$ is also closed under countable (or arbitrary) unions, thus $\ui$ is also a sigma algebra and a topology for $X.$
\end{proof}

Armed with these results, we will now examine some more involved examples.


\section{Some Non-Constructible Sets and Their Characterizations} 
\label{SectionNonConstructibleAndCharacterization}

\indent Through the construction process, only certain sets are constructible. In this section, we present a proof that the set $ \Q $ of rational numbers is not finitely constructible from the collection of open intervals. This example does provide a case in which the work on constructible sets has applications to analysis; it allows us to characterize various subsets of $ \R $ as expressible in terms of finite unions, finite intersections and complements of open sets. In some sense, this proof shows that the topological properties of $ \Q $ forbid it to be written in a finite sense using intersections, unions or complements of open or closed sets. In order to facilitate this proof, we begin with a few definitions and lemmas.

\begin{notation}
  Throughout this section, we will consider $X=\R$ and \su the usual topology, that is, arbitrary unions of intervals of the form$(a,b)$ in $\R$. Hence when we say open set or closed set, we are referring to the usual topology. We will also use the letter $F$ to represent a closed set, and the letter $G$ to represent an open set.
\end{notation}

Even though the proof of the following lemma is somewhat simple, we will prove it as this technique will be used in two more results.
\begin{lemma}\label{lmmswitchoc}
   Let $H$ be a subset of $\R ,$ \ $G_1,...,G_n$ open subsets of $\R$ and $F_1,...,F_n$ closed subsets of $\R$. If $H = \bigcup_{k = 1}^{n}\left(G_{k} \cap F_{k}\right)$  then there exist $\hat{G}_{1},...,\hat{G}_{m}$ open sets and $\hat{F}_{1},...,\hat{F}_{m}$ closed sets such that
   \begin{align*}
      H = \bigcap\limits_{j = 1}^{m} \left(\hat{G}_{j} \cup \hat{F}_{j}\right).
   \end{align*}
   Conversely, if $H = \bigcap_{k = 1}^{n}\left(G_{k} \cup F_{k}\right)$, then there exist  $\hat{G}_{1},...,\hat{G}_{m}$ open sets and $\hat{F}_{1},...,\hat{F}_{m}$ closed sets such that
   \begin{align*}
      H = \bigcup\limits_{j = 1}^{m} \left(\hat{G}_{j} \cap \hat{F}_{j}\right).
   \end{align*}
\end{lemma}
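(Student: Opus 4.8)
The plan is to prove the first implication by pushing the finite union of $n$ intersections through distributivity of intersection over union; the converse follows by an entirely symmetric argument (or by a duality/complement trick), so I would write the first part carefully and then remark that the second part is obtained by swapping the roles of $G$'s and $F$'s and interchanging $\cup$ and $\cap$ throughout.

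First I would start from $H = \bigcup_{k=1}^{n}(G_k \cap F_k)$ and use the general distributive law: a finite union of sets, each of which is an intersection of two sets, equals the intersection, over all ways of choosing one of the two ``slots'' from each term, of the union of the chosen sets. Concretely, writing each $H_k := G_k \cap F_k$ and indexing the two pieces of $H_k$ by a function, we get
\begin{align*}
   \bigcup_{k=1}^{n}\left(G_k \cap F_k\right) = \bigcap_{\sigma \in \{G,F\}^{\,n}} \left(\bigcup_{k=1}^{n} S_k^{\sigma(k)}\right),
\end{align*}
where $S_k^{G} = G_k$ and $S_k^{F} = F_k$. So each of the $m = 2^n$ factors in the intersection is a set of the form $\bigcup_{k=1}^{n} S_k^{\sigma(k)}$, i.e. a union of some collection of the $G_k$'s together with some collection of the $F_k$'s. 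Next I would regroup each such factor: the union of the selected open sets is itself open, call it $\hat{G}_\sigma$, and the union of the selected closed sets is closed, call it $\hat{F}_\sigma$ (a finite union of closed sets is closed). Handling the degenerate cases where $\sigma$ selects all $G$'s (then $\hat{F}_\sigma = \emptyset$, which is closed) or all $F$'s (then $\hat{G}_\sigma = \emptyset$, which is open) keeps everything well-defined. Relabelling the $2^n$ factors as $j = 1, \dots, m$ then gives $H = \bigcap_{j=1}^{m}(\hat{G}_j \cup \hat{F}_j)$ as required.

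For the converse, I would run the mirror image: starting from $H = \bigcap_{k=1}^{n}(G_k \cup F_k)$, distribute intersection over union to write $H$ as a union over $\sigma \in \{G,F\}^n$ of $\bigcap_{k=1}^{n} S_k^{\sigma(k)}$, then in each term collect the chosen open sets into a single open set (finite intersection of open sets is open) and the chosen closed sets into a single closed set, again treating the all-$G$ and all-$F$ cases via $\emptyset$ being both open and closed, and relabel. I do not expect a serious obstacle here; the only things to be careful about are (i) correctly stating and invoking the finite distributive law in the ``choose one slot from each factor'' form, and (ii) the bookkeeping for the empty-selection cases so that every $\hat{G}_j$ is genuinely open and every $\hat{F}_j$ genuinely closed. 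The value $m = 2^n$ is not asserted by the lemma, so I would simply note that $m$ can be taken to be $2^n$ and move on.
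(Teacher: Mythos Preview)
Your argument is correct and is a genuinely different route from the paper's. The paper proves the first implication by induction on $n$: the base case $n=1$ writes $G_1\cap F_1=(G_1\cup\emptyset)\cap(\emptyset\cup F_1)$, and the inductive step peels off the $(n+1)$-st term and distributes $(G_{n+1}\cap F_{n+1})$ into the intersection already obtained for the first $n$ terms. You instead invoke the full finite distributive law in ``choice function'' form all at once, obtaining the $2^n$ factors directly and then regrouping each into one open and one closed piece. Your approach is shorter and more conceptual, and it makes the value $m=2^n$ visible immediately; the paper's inductive argument is more elementary in that it never needs to state the general distributive law, and it mirrors the inductive structure used in the next lemma of the paper.

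One small slip in your write-up of the converse: in the dual argument the degenerate cases produce an \emph{empty intersection}, which is $\mathbb{R}$, not $\emptyset$. So when $\sigma$ selects all $G$'s you should set $\hat{F}_\sigma=\mathbb{R}$ (closed), and when $\sigma$ selects all $F$'s you should set $\hat{G}_\sigma=\mathbb{R}$ (open). Since $\mathbb{R}$ is also both open and closed this does not affect the validity of the argument, but the phrase ``via $\emptyset$ being both open and closed'' should be adjusted there.
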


\begin{proof}
   We will only prove the first part by induction on $n$, the other part is analogous. \ When $n = 1$, it clear that
   \begin{align*}
      H &= G_{1} \cap F_{1}  = \bigcap\limits_{j = 1}^{2} \left(\hat{G}_{j} \cup \hat{F}_{j}\right)
   \end{align*}
   where $\hat{G}_{1} = G_{1},\ \hat{F}_{1} = \emptyset,\ \hat{G}_{2} = \emptyset$ and $\hat{F}_{2} = F_{1}$. Assume now that the results holds for $n=k$ and consider the case $n=k+1.$ Using our induction hypothesis, we conclude that there exists $\hat{G}_{1},...,\hat{G}_{m}$ open sets and $\hat{F}_{1},...,\hat{F}_{m}$ closed sets such that
   \begin{align*}
      H &= \bigcup\limits_{k = 1}^{n+1} \left(G_{k} \cap F_{k}\right)  = \left[\bigcap\limits_{j = 1}^{m}
          \left(\hat{G}_{j} \cup \hat{F}_{j}\right) \right] \cup \left(G_{n+1} \cap F_{n+1}\right) \\
        &= \left[\left(\bigcap\limits_{j = 1}^{m} \left(\hat{G}_{j} \cup \hat{F}_{j}\right)\right)
           \cup G_{n+1} \right] \cap \left[\left(\bigcap\limits_{j = 1}^{m} \left(\hat{G}_{j} \cup \hat{F}_{j}\right)\right) \cup F_{n+1} \right]  \\
        &=\left(\bigcap_{j = 1}^{m} \left[\left(\hat{G}_{j} \cup G_{n+1}\right)\cup \hat{F}_{j}
           \right] \right) \ \cap \ \left(\bigcap_{j = 1}^{m} \left[\hat{G}_{j} \cup \left(\hat{F}_{j}\cup F_{n+1}\right)\right]\right)
   \end{align*}
    which is overall an intersection of sets of the form $G\cup F$ where $G$ is an open set and $F$ is a closed set, as we wished.
\end{proof}

\begin{lemma}\label{lmmuinftopology}
   Let $n\ge 0$ be given and let $H \in \mathcal{U}_{n}.$  Then there exist $G_{1},...,G_{N}$ open sets and $F_{1},...,F_{N}$ closed sets such that
   \[H = \bigcup\limits_{j = 1}^{N} \left(G_{j} \cap F_{j}\right)\]
\end{lemma}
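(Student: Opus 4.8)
The plan is to prove the statement by induction on $n$, with the base case $n=0$ being almost immediate. When $n=0$, any $H \in \mathcal{U} = \mathcal{U}_0$ is an open set $G$, so we may write $H = G \cap \mathbb{R}$, which is of the desired form with $N=1$, $G_1 = G$ and $F_1 = \mathbb{R}$ (recall $\mathbb{R}$ is both open and closed). The inductive step is where the real work lies: assuming every element of $\mathcal{U}_n$ is a finite union of sets of the form $G \cap F$, we must show the same for every element of $\mathcal{U}_{n+1} = C_1(\mathcal{U}_n)$.

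So let $H \in \mathcal{U}_{n+1}$. By Definition~\ref{defnconstruct}, there are $A, B \in \mathcal{U}_n$ with $H = A \cup B$, $H = A \cap B$, or $H = A^{\mathsf{c}}$. By the induction hypothesis, write $A = \bigcup_{i=1}^{p}(G_i \cap F_i)$ and $B = \bigcup_{i=1}^{q}(G_i' \cap F_i')$ with the $G$'s open and the $F$'s closed. I would then handle the three cases in turn. The union case is trivial: $A \cup B$ is already a finite union of locally closed sets. For the intersection case, I would distribute: $A \cap B = \bigcup_{i,i'} (G_i \cap G_i') \cap (F_i \cap F_i')$, and since a finite intersection of open sets is open and a finite intersection of closed sets is closed, each term is again of the form (open) $\cap$ (closed). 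The complement case is the interesting one: $A^{\mathsf{c}} = \bigcap_{i=1}^{p}(G_i \cap F_i)^{\mathsf{c}} = \bigcap_{i=1}^{p}(G_i^{\mathsf{c}} \cup F_i^{\mathsf{c}})$, and here $G_i^{\mathsf{c}}$ is closed and $F_i^{\mathsf{c}}$ is open, so $A^{\mathsf{c}}$ is a finite \emph{intersection} of sets of the form (open) $\cup$ (closed). This is exactly the hypothesis of the converse direction of Lemma~\ref{lmmswitchoc}, which then lets me rewrite $A^{\mathsf{c}}$ as a finite union of sets of the form (open) $\cap$ (closed), completing the step.

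The main obstacle is precisely the complement case, and the point of having proved Lemma~\ref{lmmswitchoc} beforehand is to dispatch it cleanly; without that lemma one would have to perform the distribution of the big intersection $\bigcap_i (G_i^{\mathsf{c}} \cup F_i^{\mathsf{c}})$ by hand, which is the content of that lemma's inductive proof. I should be slightly careful about degenerate cases (empty unions, or $A = \emptyset$ or $A = \mathbb{R}$), but these are absorbed by allowing terms $\emptyset \cap \emptyset$ or $\mathbb{R} \cap \mathbb{R}$, so no separate argument is needed. Everything else is routine De Morgan bookkeeping together with the stability of the open sets under finite intersection and of the closed sets under finite intersection.
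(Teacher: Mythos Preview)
Your proposal is correct and follows exactly the approach the paper indicates: the paper's own proof is the single sentence ``The proof of this lemma is done by induction and it is an immediate consequence of the previous one,'' and your write-up simply fills in those details, handling the union and intersection cases directly and invoking the converse direction of Lemma~\ref{lmmswitchoc} for the complement case.
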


The proof of this lemma is done by induction and it is an immediate consequence of the previous one. We recall the following topological definition found in~\cite{RoydenFitzpatrick2010} where $\overline{A}$ represents the closure of the set $A$.

\begin{definition}
We define a set $A \sub  \R $ to be nowhere dense if $\overline{A}$ has empty interior. That is, $A$ is nowhere dense if its closure contains no nonempty open intervals.
\end{definition}

 These type of sets help us to define \textit{first category} and \textit{second category} sets which are important pieces of \textit{Baire's Theorem}. A full discussion and examples can be found in ~\cite{RoydenFitzpatrick2010}, Chapter 7, Section 8. Here we have a simple implication of such a theorem that will be useful to our results.

\begin{lemma}\label{lmmweakenedbaire}
   Let $A_{1},A_{2},...,A_{n}$ be nowhere dense sets in $ \R $. Then  \,$\bigcup_{k = 1}^{n} \overline{A_{k}} \neq \R $.
\end{lemma}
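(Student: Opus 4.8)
The plan is to prove this by induction on $n$, using the fact that each $\overline{A_k}$ is a closed set with empty interior, together with the Baire category theorem for the complete metric space $\mathbb{R}$. The base case $n=1$ is immediate: a single nowhere dense set has closure with empty interior, so $\overline{A_1} \neq \mathbb{R}$ since $\mathbb{R}$ is itself open with nonempty interior.

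For the inductive step, suppose the claim holds for $n-1$ and suppose, for contradiction, that $\bigcup_{k=1}^{n} \overline{A_k} = \mathbb{R}$. Each $\overline{A_k}$ is closed and nowhere dense, so its complement $G_k = \mathbb{R} \setminus \overline{A_k}$ is open and dense (dense precisely because $\overline{A_k}$ has empty interior). The assumption forces $\bigcap_{k=1}^{n} G_k = \emptyset$. The cleanest route is then to invoke Baire's theorem directly in one of its standard forms --- a countable (in particular, finite) intersection of dense open sets in $\mathbb{R}$ is dense, hence nonempty --- to get a contradiction. Alternatively, one can avoid citing Baire in full generality and argue elementarily: pick a nonempty open interval $I_0$; since $G_1$ is dense and open, $I_0 \cap G_1$ contains a nonempty open interval, inside which we can choose a closed interval $J_1$; since $G_2$ is dense and open, the interior of $J_1$ meets $G_2$ in a set containing a nonempty closed subinterval $J_2 \subseteq J_1$; continuing, we obtain nested nonempty closed intervals $J_n \subseteq \cdots \subseteq J_1$ with $J_k \subseteq G_k$, and by the nested interval property $\bigcap_k J_k \neq \emptyset$, yielding a point in $\bigcap_{k=1}^n G_k$ --- contradiction.

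I would present the elementary nested-interval version, since it keeps the paper self-contained and matches the flavor of the surrounding constructive arguments, while remarking that this is exactly the finite case of Baire's theorem. Concretely: choose any nonempty open bounded interval $I$; inductively build closed bounded intervals $J_1 \supseteq J_2 \supseteq \cdots \supseteq J_n$ with each $J_k$ of positive length, $J_k$ contained in the interior of $J_{k-1}$, and $J_k \subseteq G_k$, using at each stage that $G_k$ open and dense means it meets every nonempty open set in a set that contains a nonempty closed interval. Then any $x \in \bigcap_{k=1}^{n} J_k$ (nonempty by completeness of $\mathbb{R}$) lies in every $G_k$, so $x \notin \overline{A_k}$ for any $k$, contradicting $\bigcup_{k=1}^n \overline{A_k} = \mathbb{R}$.

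The main obstacle is essentially bookkeeping rather than a genuine difficulty: one must be careful that "nowhere dense" is being used in the form "$\overline{A_k}$ has empty interior" so that the complement is genuinely dense, and one must ensure the nested intervals are chosen with closures staying inside the previous interval's interior so the finite intersection argument goes through cleanly. There is no subtlety about infinitude here --- only finitely many sets are involved, so even the nested interval property of $\mathbb{R}$ (or just finite induction on "a dense open set cannot be disjoint from a given nonempty open set") suffices, and full strength Baire is not strictly needed. I expect the write-up to be short.
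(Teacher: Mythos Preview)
Your argument is correct, but it takes a different route from the paper. The paper's proof is a one-line citation of Baire's Theorem: since each $A_k$ is nowhere dense, so is $\overline{A_k}$, and if the union of the closures were all of $\mathbb{R}$ then $\mathbb{R}$ would be of first category, contradicting Baire's Theorem. You instead give a self-contained nested-interval construction, essentially reproving the finite case of Baire from scratch by passing to the dense open complements $G_k = \mathbb{R}\setminus\overline{A_k}$ and building a descending chain of closed intervals $J_1 \supseteq J_2 \supseteq \cdots \supseteq J_n$ with $J_k \subseteq G_k$. Your approach buys self-containment and avoids invoking a theorem the reader may not have at hand; the paper's approach buys brevity and emphasizes that this lemma is really just Baire in disguise. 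One small presentational remark: you open by announcing induction on $n$, but your actual argument is a direct contradiction for arbitrary $n$ rather than a reduction from $n$ to $n-1$; you may want to drop the induction framing, since it is not used.
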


\begin{proof}
  This is an immediate consequence of Baire's Theorem as $A_n$ being nowhere dense makes $\overline{A_n}$ nowhere dense too and hence $\R$ would be a set of the first category, a contradiction according to Baire's Theorem.
\end{proof}

\begin{theorem}\label{thmqnotconstructible}
  The set of rational numbers is not constructible from $\u $. That is, $ \Q  \notin \mathcal{U}_{\infty}$.
\end{theorem}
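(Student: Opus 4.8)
The plan is to combine the structural characterization of constructible sets from Lemma~\ref{lmmuinftopology} with the Baire-type obstruction in Lemma~\ref{lmmweakenedbaire}. Suppose for contradiction that $\Q \in \ui$, so $\Q \in \us{n}$ for some $n$. By Lemma~\ref{lmmuinftopology}, there are open sets $G_1,\dots,G_N$ and closed sets $F_1,\dots,F_N$ with $\Q = \bigcup_{j=1}^{N}(G_j \cap F_j)$. The idea is that each piece $G_j \cap F_j$ must be a ``small'' set, and a finite union of small sets cannot be all of $\Q$ in the relevant sense; we then derive a contradiction by looking at closures and filling up $\R$.

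First I would analyze a single locally closed piece $G_j \cap F_j$ contained in $\Q$. Since $G_j \cap F_j \subseteq \Q$ and $\Q$ has empty interior in $\R$, the set $G_j \cap F_j$ has empty interior, so $G_j \cap F_j$ is nowhere dense provided its closure also has empty interior --- but in fact $\overline{G_j \cap F_j} \subseteq \overline{\Q}\cap\,\text{(something)}$ is not automatically nowhere dense, so I need to be a little careful: the cleaner route is to observe $G_j \cap F_j \subseteq \overline{G_j} \cap F_j$, and more usefully that $\Q \cap G_j$ is a countable set, hence a countable union of singletons, each nowhere dense. Actually the slickest argument: $\Q$ itself is a countable union of singletons $\{q_k\}$, each nowhere dense, so by Baire's theorem (or directly Lemma~\ref{lmmweakenedbaire} applied after truncation) $\Q$ is of the first category and cannot equal $\R$ --- but that alone does not contradict $\Q \in \ui$, since $\Q \neq \R$ is fine. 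The real contradiction must come from the \emph{finiteness} $N < \infty$ of the decomposition.

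So the key step, and the main obstacle, is to show that a finite union $\bigcup_{j=1}^N (G_j \cap F_j)$ equal to $\Q$ is impossible. Here is the mechanism I would use: since $\Q$ is dense in $\R$, each $G_j \cap F_j$ that is nonempty has $F_j \supseteq \overline{G_j \cap F_j}$, and on any open interval where $G_j$ meets the union, $F_j$ must contain a dense subset of that interval, hence $F_j$ contains that whole interval (closed sets containing a dense subset of an open interval contain its closure). Thus on such an interval $G_j \cap F_j \supseteq G_j \cap I$ would contain an open subinterval of $\Q$ --- impossible since $\Q$ has empty interior --- unless $G_j$ does not actually meet that interval. Pushing this: if $\bigcup (G_j \cap F_j) = \Q$, then $\bigcup G_j \supseteq \Q$ so $\bigcup G_j$ is a dense open set, hence by Baire its complement $\bigcap G_j^c$ is nowhere dense; meanwhile each $F_j^c$ is open, and one shows $\R = \left(\bigcup_{j} F_j^c\right) \cup \left(\bigcup_j G_j^c\right) \cup \Q$ fails to be a finite union of nowhere dense closures. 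Concretely: from $\Q = \bigcup(G_j\cap F_j)$, taking complements gives the irrationals $= \bigcap(G_j^c \cup F_j^c)$, and by Lemma~\ref{lmmswitchoc} this rearranges to a \emph{finite union} of locally closed sets $\bigcup_{i=1}^M (\hat G_i \cap \hat F_i)$ equal to the irrationals. Now apply the interval argument to the irrationals: each $\hat G_i \cap \hat F_i \subseteq \R\setminus\Q$ has empty interior, so $\hat F_i$ contains no open interval meeting $\hat G_i$ nontrivially in a dense way, forcing $\hat F_i \cap \hat G_i$ to be nowhere dense (its closure lies in $\hat F_i$, and if $\overline{\hat G_i \cap \hat F_i}$ contained an interval $I$, then $I \subseteq \hat F_i$ and $I \cap \hat G_i$ is a nonempty open subset of $I$ lying in the irrationals, contradiction). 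Hence the irrationals $= \bigcup_{i=1}^M (\hat G_i \cap \hat F_i)$ is a finite union of nowhere dense sets, so $\R = \Q \cup \bigcup_i (\hat G_i \cap \hat F_i) \subseteq \bigcup_k \overline{\{q_k'\}} \cup \bigcup_i \overline{\hat G_i \cap \hat F_i}$ for finitely many rationals... this still has the countability issue.

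To avoid the countability snag cleanly, I would localize: intersect everything with a fixed closed bounded interval, or better, argue that both $\Q$ and its complement being \emph{simultaneously} expressible as finite unions of locally closed sets is the contradiction. If $\Q = \bigcup_{j=1}^N(G_j\cap F_j)$ and $\R\setminus\Q = \bigcup_{i=1}^M(\hat G_i\cap \hat F_i)$ (the latter from Lemma~\ref{lmmswitchoc}), then as just argued every $\hat G_i \cap \hat F_i$ is nowhere dense, so $\R\setminus\Q$ is a finite union of nowhere dense sets; but $\R\setminus\Q$ is dense with dense complement, and more to the point $\R = \Q \cup (\R\setminus\Q)$ where now I also write $\Q$ using its own decomposition: each $G_j \cap F_j \subseteq \Q$ is likewise nowhere dense by the identical argument (swap the roles of $\Q$ and the irrationals). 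Therefore $\R = \bigcup_{j=1}^N (G_j\cap F_j) \cup \bigcup_{i=1}^M(\hat G_i \cap \hat F_i)$ is a finite union of $N+M$ nowhere dense sets, so $\R \subseteq \bigcup_{j}\overline{G_j\cap F_j} \cup \bigcup_i \overline{\hat G_i \cap \hat F_i}$, a finite union of closures of nowhere dense sets, directly contradicting Lemma~\ref{lmmweakenedbaire}. This is the argument I would write up; the main obstacle is the lemma-that-each-locally-closed-subset-of-$\Q$-is-nowhere-dense step, which needs the observation that a closed set containing a topologically dense subset of an open interval contains that interval, so the ``dense in an interval'' case is excluded and only the ``empty interior, closure also empty interior'' case survives.
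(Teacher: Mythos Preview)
Your final argument is correct, and the core mechanism --- showing that a locally closed set $G\cap F$ contained in a set with empty interior must be nowhere dense, then invoking Baire --- is exactly what the paper uses. However, your route is more circuitous than the paper's. You decompose \emph{both} $\Q$ and $\R\setminus\Q$ as finite unions of locally closed sets, prove every piece on both sides is nowhere dense, and then cover $\R$ by all $N+M$ pieces to contradict Lemma~\ref{lmmweakenedbaire}. The paper avoids the detour through the complement entirely: since $\Q$ is dense, taking closures in $\Q=\bigcup_{k=1}^N(G_k\cap F_k)$ already gives $\R=\bigcup_{k=1}^N\overline{G_k\cap F_k}$, so Lemma~\ref{lmmweakenedbaire} immediately forces some $\overline{G_k\cap F_k}$ to contain an interval $(a,b)$; then $(a,b)\subseteq F_k$ and $(a,b)\cap G_k$ is a nonempty open subset of $\Q$, the contradiction. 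Your approach buys a slightly more symmetric statement (it really proves Theorem~\ref{thmdensenotconstructible} along the way), at the cost of invoking Lemma~\ref{lmmswitchoc} and doubling the casework. One small gap to patch in your write-up: you assert ``$I\cap\hat G_i$ is a nonempty open subset'' without justification; the nonemptiness needs the observation that $I\subseteq\overline{\hat G_i\cap\hat F_i}$ forces $I$ to meet $\hat G_i\cap\hat F_i$, hence $\hat G_i$ --- which is precisely the step the paper spells out.
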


\begin{proof}
   Assume otherwise. Since $\Q \in \ui,$  there is $n\ge 1$ such that $\Q\in\un.$ By Lemma \ref{lmmuinftopology}, there exists $G_{1},...,G_{N}$ open sets and $F_{1},...,F_{N}$ closed sets such that $\Q  = \bigcup_{k = 1}^{N} \left(G_{k} \cap F_{k}\right)$.
   Since $\Q$ is a dense subset of $\R$ we must have
   \begin{align*}
      \overline{\Q} &= \bigcup_{k = 1}^{N} \overline{\left(G_{k} \cap F_{k}\right)}  = \R.
   \end{align*}
   From Lemma \ref{lmmweakenedbaire}, it follows that one of the members of this union must not be nowhere dense. Therefore, there exists $k\ge 1$ such that $\overline{G_{k} \cap F_{k}}$ has nonempty interior; hence there is a non-empty open interval $(a,b)$ such that $\left(a,b\right) \sub \overline{G_{k} \cap F_{k}}$. This implies that $(a,b)\sub \overline{F_{k}} = F_{k}$ and also $\left(a,b\right) \sub \overline{G_{k}}$. Since $G$ is open in the usual topology, there are $(a_n,b_n)$ such that $G_k=\bigcup\limits_{n = 1}^{\infty} (a_n,b_n)$. Since $(a,b)\sub \overline{\bigcup\limits_{n = 1}^{\infty} (a_n,b_n)},$  there is $n$ with $(a,b)\cap(a_n,b_n)\neq\emptyset.$  Therefore \ $G_{k} \cap (a,b)$ is non-empty. Hence $G_{k} \cap (a,b) \sub G_{k} \cap F_{k} \sub \bigcup\limits_{k = 1}^{n} \left(G_{k} \cap F_{k}\right) =  \Q $. But $\Q$ does not have non-empty open subsets, a contradiction. We therefore conclude that $\Q\notin\ui$.
\end{proof}

We can conclude that there is a Borel set~\cite{Billingsley1995}, namely $\Q$, that is not constructible from the usual topology. We might think that there is a direct relationship between sets of first category (see~\cite{RoydenFitzpatrick2010}) and constructible sets, but this is false as $\Q$ is of first category and its complement, the irrational numbers, is of second category and both can not be constructed from \,$\u$. Lemma~\ref{lmmuinftopology} characterizes which sets are constructible.

\begin{theorem}\label{thmdensenotconstructible}
  Let $A\sub\R$ be a set of the second category. If $E$ is a dense subset of $A$ with empty interior then $E \notin\ui,$ and $\sc{E}\notin\ui.$
\end{theorem}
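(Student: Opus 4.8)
The plan is to mimic the structure of the proof of Theorem~\ref{thmqnotconstructible}, replacing $\Q$ with the set $E$ and using the second-category hypothesis on $A$ to derive the contradiction. Suppose toward a contradiction that $E \in \ui$. Then there is some $n \ge 1$ with $E \in \un$, and by Lemma~\ref{lmmuinftopology} we may write $E = \bigcup_{k=1}^{N}(G_k \cap F_k)$ for open sets $G_k$ and closed sets $F_k$. The key observation is that each piece $G_k \cap F_k$ is contained in $E$, which has empty interior, so each $G_k \cap F_k$ has empty interior; since $\overline{G_k \cap F_k} \subseteq \overline{G_k} \cap F_k$ and the closure of a set with empty interior that is the intersection of an open and a closed set is still nowhere dense (this is the same computation as in Theorem~\ref{thmqnotconstructible}: if $(a,b) \subseteq \overline{G_k \cap F_k}$ then $(a,b) \subseteq F_k$ and $(a,b)$ meets $G_k$, forcing a nonempty open subset of $E$), each $G_k \cap F_k$ is nowhere dense. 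Hence $E$ is a finite union of nowhere dense sets, i.e.\ $E$ is of the first category.

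Now I would bring in the density hypothesis and the second-category hypothesis on $A$. Since $E$ is dense in $A$, we have $A \subseteq \overline{E}$; but $\overline{E} = \bigcup_{k=1}^{N} \overline{G_k \cap F_k}$ is a finite union of nowhere dense (hence first category) sets, so $\overline{E}$ is of the first category, and therefore so is its subset $A$. This contradicts the assumption that $A$ is of the second category. Thus $E \notin \ui$.

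For the complement, $\sc{E} \notin \ui$: since $\ui$ is an algebra (Lemma~\ref{LemmaUiIsAlgebra}), if $\sc{E}$ were in $\ui$ then $E = \sc{(\sc{E})}$ would be in $\ui$ as well, contradicting what we just proved. So $\sc{E} \notin \ui$ either.

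The only delicate point — and the step I would write out most carefully — is the argument that $G_k \cap F_k$ is nowhere dense, i.e.\ that $\overline{G_k \cap F_k}$ has empty interior. This does not follow merely from $G_k \cap F_k$ having empty interior (the closure could be larger), so one must repeat the interior-chasing argument from the proof of Theorem~\ref{thmqnotconstructible}: assume $(a,b) \subseteq \overline{G_k \cap F_k}$, deduce $(a,b) \subseteq \overline{F_k} = F_k$ and $(a,b) \subseteq \overline{G_k}$, write $G_k = \bigcup_m (a_m,b_m)$, find an $m$ with $(a,b) \cap (a_m,b_m) \ne \emptyset$, and conclude that the nonempty open set $G_k \cap (a,b) \subseteq G_k \cap F_k \subseteq E$ — contradicting that $E$ has empty interior. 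Everything else is bookkeeping with finite unions of first-category sets and the fact that $\ui$ is an algebra.
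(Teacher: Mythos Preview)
The paper states Theorem~\ref{thmdensenotconstructible} without proof, presenting it as the evident generalization of Theorem~\ref{thmqnotconstructible}. Your argument is correct and is precisely the adaptation the paper implicitly intends: the same interior-chasing computation shows each $G_k \cap F_k$ is nowhere dense (using that $E$ has empty interior in place of the specific fact that $\Q$ contains no nonempty open interval), and then density of $E$ in $A$ yields $A \subseteq \overline{E} = \bigcup_{k=1}^{N} \overline{G_k \cap F_k}$, forcing $A$ to be of the first category, a contradiction. Handling $\sc{E}$ via closure of $\ui$ under complements (Lemma~\ref{LemmaUiIsAlgebra}) is the right move. You were also right to flag that ``$G_k \cap F_k$ has empty interior'' alone does not give nowhere density, and to repeat the argument from Theorem~\ref{thmqnotconstructible} explicitly; that is the only substantive step.
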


Let's now investigate another initial family of subsets \sv consisting of the family of open intervals. In this case the Cantor set is not constructible, any infinite set with Lebesgue measure zero~\cite{Billingsley1995} is not constructible, also any infinite set with empty interior is not constructible either. In~\cite{Grinblat2010}, Grinblat proves that if there exists a non-measurable set to a given measure, then the a certain algebra does not coincide with the algebra of all subsets of $X.$

\begin{theorem}\label{thmOpenIntervalsConstructibleCharact}
  If \,$\vv=\big\{ (a,b) \ :\ a,b\in\R \cup \{-\infty,+\infty\}\big\}$ then a set  $A\sub\R$ is \sv constructible if and only if $A$ can be written as
    \begin{align}\label{EquationSpecialForm}
       P \cup \bigcup_{i=1}^n (a_i,b_i)
    \end{align}
  where $n\ge 0, \ -\infty\le a_1<b_1<a_2<\cdots a_n<b_n\le\infty$ and $P$ is a finite subset of $\R$ disjoint from $\bigcup_{i=1}^n (a_i,b_i).$
\end{theorem}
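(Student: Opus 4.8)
The plan is to prove both implications of the characterization separately. For the ``if'' direction, I would show that any set of the form \eqref{EquationSpecialForm} is \sv constructible, i.e.\ lies in $\vi$. The key observation is that a singleton $\{p\}$ can be written as $\R \setminus \big((-\infty,p)\cup(p,\infty)\big)$, so $\{p\}$ is 2-constructible (one step for the union, one step for the complement, since $\R = (-\infty,\infty) \in \vv$). More generally a finite set $P$ with $|P|=k$ is obtainable in finitely many steps by taking successive unions of the singletons; and any finite union $\bigcup_{i=1}^n (a_i,b_i)$ of the basic open intervals is obtainable in finitely many steps of union. Taking one more union of these two pieces shows the set in \eqref{EquationSpecialForm} lies in some $\vs{m}$, hence in $\vi$. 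So this direction is a routine bookkeeping argument using Lemma~\ref{LemmaBasicTripple}.

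For the ``only if'' direction, let $\mathcal{S}$ denote the collection of all subsets of $\R$ expressible in the form \eqref{EquationSpecialForm} (with the stated disjointness and ordering conventions, which really just amount to: a finite set of points union finitely many disjoint open intervals, with the points lying outside the intervals). I would show that $\mathcal{S}$ is an algebra of sets containing $\vv$; then by Lemma~\ref{LemmaAlgebraContains} (or rather the fact that $\vi$ is the \emph{smallest} algebra containing $\vv$, Lemma~\ref{LemmaUiIsAlgebra}) we get $\vi \subseteq \mathcal{S}$, which is exactly the claim. That $\vv \subseteq \mathcal{S}$ is immediate (take $P=\emptyset$, $n=1$, or $n=0$ for the empty interval). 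Closure under finite unions: given two sets in the form \eqref{EquationSpecialForm}, their union is a finite set of points together with a finite union of open intervals; merging overlapping or abutting intervals and then discarding from the point-set any point that has fallen inside an interval puts the result back in canonical form. Closure under intersection then follows from closure under union and complement by De Morgan, so the crux is \textbf{closure under complement}.

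The complement computation is where the real work is, and I expect it to be the main obstacle — though it is more careful case-analysis than deep. Write $A = P \cup \bigcup_{i=1}^n (a_i,b_i)$ in canonical form. Its complement is $\sc{A} = \big(\R \setminus P\big) \cap \big(\R \setminus \bigcup_{i=1}^n (a_i,b_i)\big)$. Now $\R \setminus \bigcup_{i=1}^n (a_i,b_i)$ is a union of finitely many closed intervals and rays $[b_i, a_{i+1}]$, $(-\infty,a_1]$, $[b_n,\infty)$ (some possibly degenerate to points, with the endpoint conventions $-\infty, +\infty$ handled by omitting the corresponding ray); intersecting with $\R \setminus P$ deletes the finitely many points of $P$ from this set. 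Each closed interval $[c,d]$ minus finitely many interior points becomes a finite union of open subintervals together with the two endpoints $c,d$ — that is, something in canonical form — and similarly for the rays. Taking the union over the finitely many pieces, and again merging and cleaning up to canonical form, shows $\sc{A} \in \mathcal{S}$. The only subtlety is keeping track of which endpoints survive as isolated points of the complement (an endpoint $b_i$ is isolated in $\sc{A}$ precisely when $b_i \notin P$ and it is not absorbed by an adjacent gap), but all of this is finite and mechanical. Once closure under complement and union is established, the theorem follows by the algebra argument above, together with the ``if'' direction for the converse inclusion.
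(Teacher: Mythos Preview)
Your proposal is correct. The overall strategy differs from the paper's only in packaging: the paper proves the ``only if'' direction by explicit induction on the construction level $m$, showing directly that each $\vs{m}$ is contained in the class $\mathcal{S}$ of special-form sets, whereas you show once and for all that $\mathcal{S}$ is an algebra containing $\vv$ and then invoke Lemma~\ref{LemmaUiIsAlgebra}. These are equivalent moves, but your framing is a bit cleaner. Two further organizational differences are worth noting. First, the paper treats intersection as an independent case (distributing $(P\cup\bigcup_i(a_i,b_i))\cap(P'\cup\bigcup_j(a'_j,b'_j))$ explicitly), while you dispense with it via De~Morgan once union and complement are handled; this genuinely shortens the argument. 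Second, for the complement the paper writes $\sc{A}=\sc{P}\cap\bigcap_i\sc{(a_i,b_i)}$, observes that each factor is already in special form, and then \emph{reuses} its intersection case; your route instead computes $\R\setminus\bigcup_i(a_i,b_i)$ as a finite union of closed intervals and rays and then deletes the points of $P$. Both are fine, but note that your phrase ``minus finitely many interior points'' is slightly loose: since $P$ is only required to be disjoint from the open intervals, a point of $P$ may coincide with an endpoint $a_i$ or $b_i$ of one of the closed gap-intervals, not just an interior point---you do catch this in your final sentence about which endpoints survive, so the argument goes through, but it would be worth saying so explicitly at that step.
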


\begin{proof} 
   Clearly for any real number $x$ we have that
   $\{x\}=(-\infty,x)^c\cap(x,\infty)^c$, hence $\{x\}$ is constructible (in fact it is 2-constructible). Now if $A$ has the form \ref{EquationSpecialForm}, it can be constructed in at most $n\vee (|P|+2)$ steps. We will prove the converse by induction, more precisely, we will prove that if $A\in\us{m}$ then $A$ has the special form \ref{EquationSpecialForm}.
   \begin{enumerate}
     \item When $m=1$ we have two cases. If $A=(a,b),\, (a,b)\cup (c,d)\, $ or $\, (a,b)\cap (c,d),\ \, A$ can be expressed as the union of disjoint open intervals (including $(a,\infty)$ or $(-\infty,b)$), in which case $A$ has the form \ref{EquationSpecialForm} with $P=\emptyset$.  If $A=\sc{(a,b)}$ with $a,b\in\R$, then $A=\{a,b\}\cup (-\infty,a)\cup(b,\infty)$; if $a=-\infty$, then $A=\sc{(a,b)}=\{b\}\cup (b,\infty)$, and if $b=+\infty$ then $A=\sc{(a,b)}=\{a\}\cup (-\infty,a)$. The case $a=-\infty$ and $b=\infty$ reduces to $A=\R$, hence $A^c=\emptyset$, which certainly has the form \ref{EquationSpecialForm} with $P=\emptyset$ and $n=0$. In all the cases, $A$ has the form \ref{EquationSpecialForm}.
     \item Suppose that statement holds for $m=k.$
     \item Consider $A\in\us{m+1}.$ By induction hypotheses we can assume that $A$ is the union of two elements of the form \ref{EquationSpecialForm} (case 1), the intersection of two elements of such form (case 2) or the complement of an element of such form (case 3). In case 1, it is clear the union of two elements of the form \ref{EquationSpecialForm} has the same form, we just need to rewrite the union of the two disjoint unions as a huge disjoint union and adjust the singletons. In the second case, assume that
         \begin{align}\label{EquationSpecialFormTwo}
             A= \left(P \cup \bigcup_{i=1}^n (a_i,b_i)\right)\cap \left(P' \cup \bigcup_{i=1}^r (a'_i,b'_i)\right),
         \end{align}
         which is the following union of singletons
         \[ \left[ P \cap \left(P' \cup \bigcup_{i=1}^r (a'_i,b'_i)\right)\right]\cup \left[\left(\bigcup_{i=1}^n (a_i,b_i)\right)\cap  P'\right]
         \]
         and
         \[  \left(\bigcup_{i=1}^n (a_i,b_i)\right)\cap \left(\bigcup_{i=1}^r (a'_i,b'_i)\right).
         \]
     Here we use the fact that the intersection of two finite unions of open intervals is again a disjoint union of open intervals. Making the correct singletons-adjustment we conclude that the set in Equation \ref{EquationSpecialFormTwo} has the form \ref{EquationSpecialForm}. Finally, in case 3
     \begin{align}\label{EquationSpecialFormThree}
             A= \sc{\left(P \cup \bigcup_{i=1}^n (a_i,b_i)\right)}=\sc{P}\cap\bigcap_{i=1}^n \sc{(a_i,b_i)}.
         \end{align}
   \end{enumerate}
   Here $\sc{P}$ is clearly a union of open intervals (hence it is of the form \ref{EquationSpecialForm}). Let's assume without lost of generality that $-\infty<a_1<b_1<a_2<b_2<\cdots<a_n<b_n<\infty$. Hence
    \[\bigcap_{i=1}^n \sc{(a_i,b_i)}= (-\infty,a_1)\cup (b_1,a_2)\cup (b_2,a_3) \cdots(b_n,\infty)\cup \{a_1,b_1,...,a_n,b_n\},
    \]
    which is of the form \ref{EquationSpecialForm}. Therefore $A$ is the intersection of two sets of the form \ref{EquationSpecialForm} and by case 2, $A$ has the form \ref{EquationSpecialForm}.
\end{proof}

Theorem~\ref{thmOpenIntervalsConstructibleCharact} characterizes the elements of the algebra generated by \,$\vv$. The following theorem, found in~\cite{Folland1999} (Prop. 1.19, p.36), characterizes the elements of the sigma algebra generated by \sv. The similarities are remarkable.

\begin{theorem}\label{thmOpenSigmaAlgebra}
  If \,$\vv=\big\{ (a,b) \ :\ a,b\in\R \cup \{-\infty,+\infty\}\big\}$ then a set  $A\sub\R$ is in the sigma algebra generated by \sv  if and only if $A$ can be written as
    \begin{align*}
       P \cup \bigcup_{i=1}^\infty F_i
    \end{align*}
  where $F_i$ are closed sets and $P$ is a set of Lebesgue measure zero.
\end{theorem}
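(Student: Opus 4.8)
Although this characterization is classical (it is the content of the result of Folland cited just above), for completeness I would include a short proof. The plan is to establish the two implications separately: the reverse implication is essentially immediate, while the forward implication rests entirely on the inner regularity of Lebesgue measure. One preliminary remark is in order: the sigma-algebra generated by $\vv$ is the Borel sigma-algebra, but the sets $P$ permitted in the statement are arbitrary Lebesgue-null sets, which need not be Borel; so the ``sigma-algebra generated by $\vv$'' must be read here as its completion with respect to Lebesgue measure, i.e. the class of Lebesgue measurable sets --- the setting in which Folland states the result. I would make this clarification explicit before starting.

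For the reverse implication I would argue as follows. Suppose $A = P \cup \bigcup_{i=1}^\infty F_i$ with each $F_i$ closed and $m(P) = 0$. Every open subset of $\R$ is a countable union of members of $\vv$ (intervals with rational endpoints already suffice), so each open set, hence each closed set $F_i$, lies in the sigma-algebra; therefore so does $\bigcup_i F_i$. A set of Lebesgue measure zero is Lebesgue measurable, hence lies in the completion, so $P$ is in the sigma-algebra as well. Being a union of two such sets, $A$ is in the sigma-algebra.

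For the forward implication, given a Lebesgue measurable set $A$, I would first reduce to the finite-measure case by writing $\R = \bigsqcup_{k\in\mathds{Z}} [k,k+1)$ and setting $A_k = A \cap [k,k+1)$, so that $m(A_k) < \infty$ for every $k$. By inner regularity of Lebesgue measure, for each $k$ and each $n \ge 1$ there is a closed set $F_{k,n} \sub A_k$ with $m(A_k \setminus F_{k,n}) < 2^{-n}$. I would then put $F = \bigcup_{k\in\mathds{Z}} \bigcup_{n\ge 1} F_{k,n}$, a countable union of closed sets with $F \sub A$. Since $A_k \setminus F \sub A_k \setminus F_{k,n}$ for every $n$, we get $m(A_k \setminus F) = 0$; as the $A_k$ are disjoint and $A \setminus F = \bigsqcup_k (A_k \setminus F)$, countable additivity gives $m(A \setminus F) = 0$. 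Setting $P = A \setminus F$ yields $A = F \cup P$, which, after re-indexing the countably many closed sets making up $F$, is exactly the asserted form.

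The only substantive ingredient is the inner regularity of Lebesgue measure invoked in the forward direction --- the fact that every measurable set of finite measure contains closed subsets of almost full measure; this is where the real content lies, and everything else is bookkeeping. The two points that warrant a little care are the preliminary decomposition of $A$ into pieces of finite measure (needed because inner regularity in this quantitative form is a statement about finite measure) and the observation, noted above, that allowing $P$ to be an arbitrary null set forces the statement to be about the Lebesgue-completed sigma-algebra rather than the Borel one.
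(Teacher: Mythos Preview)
The paper does not actually supply a proof of this theorem: it is stated only for contrast with Theorem~\ref{thmOpenIntervalsConstructibleCharact} and attributed to Folland (Proposition~1.19). So there is no ``paper's own proof'' against which to compare yours.

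That said, your argument is correct and is precisely the standard one that appears in Folland: inner regularity of Lebesgue measure furnishes the closed sets, a reduction to finite measure handles the $\sigma$-finiteness, and the reverse direction is immediate. Your preliminary remark is also well taken and worth keeping: as literally written, ``the sigma algebra generated by $\vv$'' is the Borel $\sigma$-algebra, whereas allowing $P$ to be an \emph{arbitrary} Lebesgue-null set (not necessarily Borel) forces the intended class to be the Lebesgue measurable sets, which is exactly how Folland states the result. If you include a proof, that clarification belongs up front.
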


If we do not allow infinite intervals in our original family we could prove a similar version of Theorem~\ref{thmOpenIntervalsConstructibleCharact} in the same manner.

\begin{theorem}\label{thmBoundedOpenIntervalsConstructibleCharact}
  If \,$\vv=\{ (a,b) \ :\ a,b\in\R \}$ then a set  $A\sub\R$ is \sv constructible if and only if $A$ can be written as
    \begin{align*}
       P \cup \bigcup_{i=1}^n (a_i,b_i)
    \end{align*}
  where $a_1,b_n$ are both finite or both infinite, \  $a_1<b_1<a_2<\cdots<b_n$ and $P$ is a finite set disjoint from $\bigcup_{i=1}^n (a_i,b_i).$
\end{theorem}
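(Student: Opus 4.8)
The plan is to mimic the proof of Theorem~\ref{thmOpenIntervalsConstructibleCharact} almost verbatim, with the only extra bookkeeping being the tracking of the ``both finite or both infinite'' endpoint condition that replaces the freer condition $-\infty\le a_1<b_1<\cdots<b_n\le\infty$ allowed there. First I would handle the easy direction: if $A$ has the displayed form, then each bounded open interval $(a_i,b_i)$ is already in $\vv$, and if the extreme endpoints are infinite we must produce $(-\infty,a_1)$ and $(b_n,\infty)$ and an unbounded middle piece as needed --- but note that with only bounded intervals in $\vv$, a set like $(-\infty,c)$ is \emph{not} directly available; however $\R=\bigl((0,1)\bigr)\cup\sc{(0,1)}$-type constructions give $\R$ in a bounded number of steps (indeed $\R=(a,b)\cup\sc{(a,b)}\cup\{a,b\}$ assembled with complements and unions), and from $\R$ together with a bounded interval one gets $(-\infty,c)=\R\cap\sc{[c,\infty)}$; more carefully, $\sc{(a,b)}=(-\infty,a]\cup[b,\infty)$, so half-lines (as closed rays) appear at step~$1$ and finite unions of them at later steps. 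So the forward direction is: show each piece of \eqref{EquationSpecialForm}-type expression, under the stated endpoint constraint, lies in some $\vs{m}$, and bound $m$ as in the earlier proof. Singletons $\{x\}$ are obtained as $\sc{(a,x)}\cap\sc{(x,b)}$ intersected appropriately --- wait, with bounded intervals $\{x\} = \sc{(x-1,x)}\cap\sc{(x,x+1)} \cap (x-1,x+1)$-style; I would just note $\{x\}$ is constructible in boundedly many steps and move on.

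For the converse, I would run the same induction on $m$: if $A\in\vs{m+1}$ then $A$ is a union, intersection, or complement of two sets already known (by induction) to have the special form, and I would show the special form is preserved by each operation. Unions and intersections are handled exactly as in Theorem~\ref{thmOpenIntervalsConstructibleCharact} --- a finite union of bounded open intervals plus a finite set, intersected or unioned with another such, is again of that shape, and the key closure fact ``intersection of two finite unions of bounded open intervals is a finite disjoint union of bounded open intervals'' holds verbatim since boundedness is preserved under intersection and subintervals. The only genuinely new case is the complement: $\sc{\bigl(P\cup\bigcup_{i=1}^n(a_i,b_i)\bigr)}=\sc P\cap\bigcap_{i=1}^n\sc{(a_i,b_i)}$, and here I must verify that taking complements keeps us inside the ``both finite or both infinite'' regime. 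If the original set had $a_1,b_n$ both finite, the complement contains the two unbounded rays $(-\infty,a_1)$ and $(b_n,\infty)$, so the complement has both extreme endpoints infinite --- consistent with the dichotomy. If the original had $a_1=-\infty$ and $b_n=+\infty$, then the complement is a finite union of bounded intervals together with the finite set $P$, so both extreme endpoints of the complement are finite. In the degenerate sub-case $n=0$ (so $A=P$ finite), $\sc P$ is a finite union of bounded open intervals plus infinite rays, i.e.\ the ``both infinite'' case with $P'=\emptyset$.

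The main obstacle, and the step I would write most carefully, is precisely this verification that the complement operation cannot produce a ``mixed'' set --- one bounded extreme endpoint and one infinite extreme endpoint --- which is exactly why the statement of the theorem can afford the restrictive hypothesis ``$a_1,b_n$ both finite or both infinite.'' The point is structural: the complement of a union of disjoint open intervals in $\R$ always either (i) contains two unbounded rays (when the union omits a neighbourhood of $\pm\infty$) or (ii) is itself a union of bounded intervals (when the union already covers neighbourhoods of both $+\infty$ and $-\infty$, which under our constraint forces $a_1=-\infty,b_n=+\infty$). A one-sided cover $(-\infty,b_n)$ with $b_n$ finite is never achievable as a piece of an already-constructed set, by the inductive hypothesis itself --- so the dichotomy is self-propagating. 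Once this is pinned down, I would close the induction exactly as in the earlier proof, concluding that $A$, being an intersection of two sets of the required special form, again has that form by the (already verified) intersection case.
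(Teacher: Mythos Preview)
Your approach matches the paper's, which does not give a separate proof but simply states that the result is obtained ``in the same manner'' as Theorem~\ref{thmOpenIntervalsConstructibleCharact}; your identification of the endpoint-dichotomy bookkeeping under complementation as the sole new ingredient is exactly right, and your inductive argument for the converse direction is sound.

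There is one slip in your forward direction worth flagging: you propose to ``show each piece \ldots\ lies in some $\vs{m}$'', and you spend several lines trying to build a single ray $(-\infty,c)$. But in the both-infinite case the individual rays $(-\infty,b_1)$ and $(a_n,\infty)$ are \emph{not} $\vv$-constructible --- they violate the very dichotomy you are establishing, having one finite and one infinite extreme endpoint. You must instead construct the entire union $(-\infty,b_1)\cup(a_2,b_2)\cup\cdots\cup(a_n,\infty)$ in one stroke, as the complement of the bounded set $[b_1,a_2]\cup\cdots\cup[b_{n-1},a_n]$ (each closed bounded interval being readily built from $\vv$). With that adjustment your sketch goes through.
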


We finalize this section with an observation on specific constructible sets.

\begin{observation}
  If \,$\vv=\{ (-\infty,a),(b,\infty) \, :\, a,b\in\R \}\}$ then any type of interval  is 2-constructible. If \sv consists of the open sets then the Cantor set is 1-constructible. If \sv consists of the open intervals then for any $n\ge 1$ the Cantor set is not $n-$constructible.
\end{observation}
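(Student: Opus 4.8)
The observation bundles three separate claims, so I would prove each in turn; none requires induction, and the main work is bookkeeping with the interval forms already developed in this section.

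\textbf{Claim 1.} Let $\vv=\{(-\infty,a),(b,\infty):a,b\in\R\}$. First I would observe that for any $a<b$ the open interval $(a,b)$ equals $(-\infty,b)\cap(a,\infty)$, an intersection of two members of $\vv$, hence $(a,b)\in\csv{1}$; also $(-\infty,a)$ and $(b,\infty)$ are already in $\vv\subseteq\csv{1}$ and $\emptyset=(-\infty,a)\cap(a,\infty)\in\csv{1}$ while $\R=(-\infty,b)\cup(b,\infty)\in\csv{1}$. It then remains to produce the half-open and half-closed intervals, the singletons, and the unbounded closed rays in two steps. For a singleton, $\{x\}=(-\infty,x)^{\mathsf c}\cap(x,\infty)^{\mathsf c}$; since each complement is in $\csv{1}$ (it is the complement of a member of $\vv$), the singleton is in $\csv{2}$. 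A half-open interval $[a,b)=\{a\}\cup(a,b)$ and similarly $(a,b]$, $[a,b]$, and the closed rays $[a,\infty)=\{a\}\cup(a,\infty)$, $(-\infty,a]=\{a\}\cup(-\infty,a)$; I would check that each of these is a union of a set from $\csv{1}$ with a set from $\csv{1}$, hence lies in $\csv{2}$. Thus every type of interval is $2$-constructible.

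\textbf{Claim 2.} If $\vv$ is the usual topology on $\R$, then the Cantor set $\mathcal C$ is $1$-constructible: $\mathcal C$ is closed, so $\R\setminus\mathcal C$ is open, i.e.\ $\R\setminus\mathcal C\in\vv$; writing $\mathcal C=(\R\setminus\mathcal C)^{\mathsf c}$ exhibits it as the complement of a single element of $\vv$, so $\mathcal C\in\csv{1}$.

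\textbf{Claim 3.} If $\vv=\{(a,b):a,b\in\R\}$ is the family of open intervals, then $\mathcal C$ is not $n$-constructible for any $n$. Here I would invoke Theorem~\ref{thmOpenIntervalsConstructibleCharact}: every $\vv$-constructible set has the form $P\cup\bigcup_{i=1}^n(a_i,b_i)$ with $P$ finite. The Cantor set is uncountable, hence cannot equal such a set unless it contains a nonempty open interval $(a_i,b_i)$; but $\mathcal C$ has empty interior, so it contains no such interval. Therefore $\mathcal C\notin\csv{n}$ for every $n$, i.e.\ $\mathcal C$ is not $n$-constructible for any $n$. (The same argument kills any infinite set with empty interior, which is the remark made just before the observation.)

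The only mild obstacle is Claim 1: one must be careful that $\vv$ itself contains no bounded open intervals, so the case analysis for the various interval types has to be organized so that each target set is genuinely realized by \emph{one} application of $\coso{\cdot}$ to sets already known to be in $\csv{1}$; all the needed membership facts, however, are the elementary identities listed above, so there is no real difficulty once the list is written out.
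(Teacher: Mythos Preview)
Your arguments for Claims~2 and~3 are fine (for Claim~3 either Theorem~\ref{thmOpenIntervalsConstructibleCharact} or Theorem~\ref{thmBoundedOpenIntervalsConstructibleCharact} suffices, since the bounded family is contained in the unbounded one and the Cantor set has empty interior and is uncountable). The paper offers no proof for this observation, so the only issue is correctness---and there is a genuine gap in your Claim~1.

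You correctly show that $\{a\}=(-\infty,a)^{\mathsf c}\cap(a,\infty)^{\mathsf c}$ lies in $\csv{2}$, \emph{not} in $\csv{1}$. But immediately afterward you write $[a,b)=\{a\}\cup(a,b)$ (and similarly for $(a,b]$, $[a,b]$, and the closed rays) and assert that each is ``a union of a set from $\csv{1}$ with a set from $\csv{1}$.'' This is false: every member of $\csv{1}$ is an open ray, a closed ray (as a complement), a bounded open interval, a union of two open rays, $\emptyset$, or $\R$; no singleton appears. Your decompositions therefore place $[a,b)$, $(a,b]$, $[a,b]$ only in $\csv{3}$, not $\csv{2}$, and the claim is not established. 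The repair is immediate once you exploit the closed rays you already have in $\csv{1}$, namely $[a,\infty)=(-\infty,a)^{\mathsf c}$ and $(-\infty,b]=(b,\infty)^{\mathsf c}$:
\[
[a,b)=[a,\infty)\cap(-\infty,b),\qquad (a,b]=(a,\infty)\cap(-\infty,b],\qquad [a,b]=[a,\infty)\cap(-\infty,b],
\]
each an intersection of two members of $\csv{1}$, hence in $\csv{2}$. (A small separate slip: $(-\infty,b)\cup(b,\infty)=\R\setminus\{b\}$, not $\R$; take overlapping rays, e.g.\ $(-\infty,1)\cup(0,\infty)$.)
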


We will next focus on finite $X$ in order to find ways to characterize sets that are in fact constructible.


\section{Separability and Partitions} 
\label{SectionSeparabilityAndPartitons}

\indent Through this work, it has become apparent that an examination of the forms of the sets in an initial family can give information regarding the sets in the last family. In the examples presented from finite initial families, it is worth to note that when constructing $\uo,\uoo,..\un,..$ some elements of $X$ are always grouped together; that is, there are groups of elements in $X$ that either appear all together in each element of $\un$, or none appear.

\begin{example}
   Consider  $X = \{1,2,3,4\}$ and the initial family $\u  = \{\{1,2\},\{3\}\}$. We then have
   \begin{align}
      \u  &= \{\{1,2\},\{3\}\}, \nonumber \\
      \uo  &= \u  \cup \{\{1,2,3\},\emptyset,\{3,4\},\{1,2,4\}\}, \nonumber \\
      \uoo  &= \uo  \cup \{X, \{4\}\}, \nonumber \\
      \mathcal{U}_{\infty} = \uooo  &= \uoo.  \nonumber
   \end{align}
   Thus, it is clear that the elements of the set $\{1,2\}$ are always paired; they never appear separately.
\end{example}

Examples such as this have motivated the following definition.

\begin{definition}\label{defnseparable}
  Let \su be a family of subsets of  $X$. Two elements $a, b \in X$ are \emph{separable} in \su if and only if there exists a set $A \in \mathcal{U}$ such that either $a \in A, \,b \notin A$ or $a \notin A, \,b \in A$. If any two different elements are separable in \suu, we say that $X$ is separable in \suu. If two elements are not separable in $\mathcal{U}$, we say that they are unseparable in $\mathcal{U}$.
\end{definition}

When the family \su is understood and $X$ is separable in \suu, we will simply say that $X$ is \textit{separable}. This definition is very useful through the whole paper. If our initial family is a topology $\tau$ and $X$ is separable in $\tau$ then $X$ satisfies the first axiom of separability ($T_0$) in the topological sense~\cite{RoydenFitzpatrick2010}. If the initial family $\u$ is separable, then $\mathcal{U}_{\infty}$ is precisely the full power set, as noted in the following theorem:

\begin{theorem}\label{TheoremXSeparableUiPower}
  If \su is given and $X$ is finite and separable then \,$\ui=2^X$, that is,  \,$\ui$ consists of all possible subsets of $X.$
\end{theorem}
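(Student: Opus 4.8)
The plan is to show that every singleton $\{a\}$ for $a \in X$ belongs to $\ui$; since $\ui$ is an algebra (Lemma~\ref{LemmaUiIsAlgebra}) and $X$ is finite, closure under finite unions then immediately gives every subset of $X$, so $\ui = 2^X$. The inclusion $\ui \subseteq 2^X$ is trivial, so the whole content is the reverse inclusion, and that reduces to producing singletons.

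First I would fix $a \in X$ and, for each $b \in X$ with $b \neq a$, use separability to pick a set $A_b \in \u$ that ``cuts'' $a$ from $b$; by replacing $A_b$ with its complement if necessary (complements are available after one construction step, so $A_b$ or $\sc{A_b}$ lies in $\u_1$), I may assume $a \in A_b$ and $b \notin A_b$. Then
\[
  \{a\} = \bigcap_{b \neq a} A_b,
\]
because the right-hand side contains $a$ and excludes every other point. Since $X$ is finite this is a finite intersection of sets each lying in $\u_1$, so by iterating Definition~\ref{defnconstruct} (an $(|X|-1)$-fold intersection can be carried out in $\lceil \log_2(|X|-1)\rceil$ further construction steps, or more crudely in $|X|-2$ steps) we get $\{a\} \in \u_m$ for some finite $m$, hence $\{a\} \in \ui$.

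Next, every subset $S \subseteq X$ can be written as $S = \bigcup_{a \in S} \{a\}$ (with $S = \emptyset$ handled separately, e.g.\ $\emptyset = A \cap \sc{A}$ for any $A \in \u$, which is in $\u_2$). This is a finite union of elements of $\ui$, and since $\ui$ is an algebra it is closed under finite unions, so $S \in \ui$. Therefore $2^X \subseteq \ui$, and combined with the trivial reverse inclusion we conclude $\ui = 2^X$.

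The argument is essentially routine; the only point requiring a little care is the bookkeeping showing that an intersection of $k$ sets, each already constructible, is itself constructible in finitely many steps — this follows by an easy induction on $k$ using that $C_1$ is closed under pairwise intersection, so it is not a real obstacle. The key idea, and the only place the hypothesis is used, is the passage from ``$X$ separable'' to ``each singleton is an intersection of members of $\u$ (up to complementation)''.
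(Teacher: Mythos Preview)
Your proof is correct and follows the same overall strategy as the paper: show that every singleton $\{a\}$ lies in $\ui$, then use closure under finite unions to obtain all of $2^X$. The tactical difference is in how the singletons are produced. You build $\{a\}$ explicitly as the finite intersection $\bigcap_{b\neq a} A_b$ of separating sets (after possibly complementing each $A_b$), which is direct and constructive. The paper instead argues by minimality: it sets $f(a)=\min\{|A|:A\in\ui,\ a\in A\}$ and shows $f(a)=1$ by contradiction --- if some minimizing $A$ had $|A|>1$, pick $b\in A\setminus\{a\}$, use separability to get $B\in\u$ that distinguishes $a$ from $b$, and then $A\cap B$ or $A\cap\sc{B}$ is a strictly smaller set in $\ui$ still containing $a$.

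Both arguments use the hypothesis in exactly the same place (separability supplies, for each $b\neq a$, a set in $\u$ cutting $a$ from $b$). Your version has the advantage of giving an explicit construction of $\{a\}$ and a concrete bound on the number of steps; the paper's minimality argument is slightly slicker in that it never needs to name all the $A_b$'s at once or track the bookkeeping of an iterated intersection. Neither approach is deeper than the other --- they are two standard ways of packaging the same idea.
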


\begin{proof}
   Clearly, \,$\ui$\, is finite. Let $a\in X$ and consider $f(a)=\min\{|A| \, :\, A\in\ui, \, a\in A\}$. We claim that $f(a)=1.$  Suppose that $f(a)>1$ and take $A$ a set where this minimum is achieved, that is,  $A\in\ui$ with $a\in A$ and $f(a)=|A|.$ Since $|A|>1$, there exist $b\in A$ such that $b\neq a.$ Since $X$ is separable in \,$\u$\, there exists $B\in\u$ such that $a\in B, \, b\in \sc{B}$ or $a\in \sc{B}, \, b\in B.$  Define
    \[
      \define{C}
                 { A\cap B }
                 { a\in B, \, b\in \sc{B} }
                 { A\cap \sc{B} }
                 { a\in \sc{B}, \, b\in B }
    \]
   Then clearly $C\in\ui, \, a\in C$ and $|C|<|A|$ as $b\in A-C,$ contradicting the selection  of $A.$ Since $f(a)=1$ and $a$ was arbitrary, we conclude that \,$\ui$\, contains all singletons, hence it contains all possible unions of such and therefore  \,$\ui$ consists of all possible subsets of $X.$
\end{proof}

We immediately state the  following lemma which will help us to prove some future theorems.

\begin{lemma}\label{LemmaEquivalenceRelationship}
  If \su is given and we define $a\sim b$ in $X$ if and only if $a$ and $b$ are unseparable, then the relationship $\sim$ is an equivalence relationship.
\end{lemma}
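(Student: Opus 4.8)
The plan is to verify the three defining properties of an equivalence relation for $\sim$: reflexivity, symmetry, and transitivity. Reflexivity is immediate from the definition of separability: an element $a$ can never be separated from itself, since there is no set $A$ with $a \in A$ and $a \notin A$ simultaneously, so $a \sim a$ always holds. Symmetry is also essentially built into Definition~\ref{defnseparable}: the condition ``$a \in A, b \notin A$ or $a \notin A, b \in A$'' is visibly symmetric in $a$ and $b$, so $a$ and $b$ are separable if and only if $b$ and $a$ are, and hence $a \sim b$ if and only if $b \sim a$.

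The only property requiring an argument is transitivity, and I would prove its contrapositive. Suppose $a \sim b$ and $b \sim c$; I want to show $a \sim c$, i.e. that $a$ and $c$ are unseparable. Suppose for contradiction that $a$ and $c$ \emph{are} separable, so there is a set $A \in \u$ with exactly one of $a, c$ in $A$. Without loss of generality say $a \in A$ and $c \notin A$ (the other case is symmetric, or one can replace $A$ by $\sc{A}$ — though note $\sc{A}$ need not be in $\u$, so it is cleaner to just handle both cases by symmetry). Now consider where $b$ lies relative to $A$: either $b \in A$ or $b \notin A$. If $b \notin A$, then $A$ separates $a$ from $b$ (since $a \in A$, $b \notin A$), contradicting $a \sim b$. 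If $b \in A$, then $A$ separates $b$ from $c$ (since $b \in A$, $c \notin A$), contradicting $b \sim c$. Either way we reach a contradiction, so $a$ and $c$ must be unseparable, i.e. $a \sim c$.

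I do not anticipate any real obstacle here; the main point is simply to be careful with the ``without loss of generality'' reduction, since complements of sets in $\u$ need not belong to $\u$ — but this causes no problem because separability itself is symmetric in the two elements, so the two sub-cases $a \in A, c \notin A$ and $a \notin A, c \in A$ are handled identically after swapping roles. The entire proof is a short case analysis, and the conclusion is that $\sim$ partitions $X$ into equivalence classes of mutually unseparable elements.
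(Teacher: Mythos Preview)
Your proof is correct and follows essentially the same approach as the paper: reflexivity and symmetry are dispatched immediately from the definition, and transitivity is handled by assuming $a\sim b$, $b\sim c$, $a\not\sim c$, picking a separating set $A\in\u$, and deriving a contradiction from the position of $b$ relative to $A$. The paper phrases the last step as ``$a\in A$ and $a\sim b$ force $b\in A$, then $b\in A$ and $b\sim c$ force $c\in A$,'' while you phrase it as a two-case split on whether $b\in A$; these are the same argument, and your care about not invoking $\sc{A}\in\u$ for the ``without loss of generality'' step is well placed.
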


\begin{proof}
  Clearly $a\sim a$ as there is not $A\in\u$ such that $a\in A$ and $a\notin A.$ From the definition we can see that if $a\sim b$ then $b\sim a$. Now suppose that $a\sim b, \, b\sim c$ and suppose that $a\not\sim c$ hence $a,c$ are separable. Then there is $A\in\u$ such that either $a\in A, \, c\notin A$ or $a\notin A, \, c\in A$. If $a\in A, \, c\notin A$, since $a$ and $b$ are unseparable,   $a,b\in A$; since $b$ and $c$ are also unseparable,  $b,c\in A$; Therefore $a, b, c\in A,$ a contradiction. The other case $a\notin A, \, c\in A$ is treated in the same manner.
\end{proof}

From now on, we will denote by $[a]_\u$ (or simply $[a]$) the equivalence class of $a$ under this relationship. It is also possible, given an algebra, to look backwards at the generating families based on separable elements. We will in fact be able to correspond every algebra with a particular classification of elements based on separability.

\begin{example}\label{ExampleAlgebraPartition}
Consider the universe $X = \{1,2,3,4,5\}$ and the algebra $\mathcal{A} = \{\emptyset,\{1,2\},\{3,4\},\{5\},\{1,2,3,4\},\{1,2,5\}, \{3,4,5\},X\}$. It follows that there are three different groupings of elements: $1$ and $2$ are unseparable, $3$ and $4$ are unseparable, but $5$ is separable from all other elements. The sets in \sa consist of all possible combinations of the sets $\{1,2\}$, $\{3,4\}$, and $\{5\}$. Note further that the sets $\{1,2\},\{3,4\},\{5\}$ form a partition of the universe $X$.
\end{example}

Examples such as this have led to much more general results on the relationship between partitions and finite algebras. As this example shows, it is possible to work backwards from an algebra and derive a small initial family which generates the algebra. These ideas have led to the following three results.

\begin{lemma}\label{LemmaCharactizeElementsAlgebraPartition}
   Let  $\u  = \{A_{1},\mathellipsis, A_{n}\}$ be a finite partition of $X$. Then $\left|\mathcal{U}_{\infty}\right| = 2^{n}$. Further, $A \in \mathcal{U}_{\infty}$ if and only if there exists a set $\Lambda \sub \{1,\mathellipsis,n\}$ such that
   \begin{align}\label{EquationUnionPartition}
      A = \bigcup\limits_{\lambda \in \Lambda} A_{\lambda}.
   \end{align}
   Also  $a,b$ are unseparable in \,$\ui$ if and only if $a,b$ are unseparable in \,$\u.$
\end{lemma}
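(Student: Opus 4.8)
The plan is to prove the three assertions of Lemma~\ref{LemmaCharactizeElementsAlgebraPartition} in sequence, deriving each from the previous one. First I would show that every $A \in \ui$ has the form \eqref{EquationUnionPartition}. The key observation is that the collection $\mathcal{P} = \big\{ \bigcup_{\lambda \in \Lambda} A_\lambda \,:\, \Lambda \subseteq \{1,\dots,n\}\big\}$ of all unions of partition blocks is already an algebra of sets: it is closed under complements (since $\sc{\big(\bigcup_{\lambda \in \Lambda} A_\lambda\big)} = \bigcup_{\lambda \notin \Lambda} A_\lambda$, using that $\u$ is a partition), under finite unions (take $\Lambda_1 \cup \Lambda_2$), and under finite intersections (take $\Lambda_1 \cap \Lambda_2$, again using disjointness of distinct blocks). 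Since $\mathcal{P} \supseteq \u$ and $\mathcal{P}$ is an algebra, Lemma~\ref{LemmaUiIsAlgebra} (or directly Lemma~\ref{LemmaAlgebraContains}) gives $\ui \subseteq \mathcal{P}$. The reverse inclusion $\mathcal{P} \subseteq \ui$ is immediate since each $A_\lambda \in \u \subseteq \ui$ and $\ui$ is an algebra, hence closed under finite unions. This proves the ``if and only if'' characterization.

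For the cardinality claim, once we know $\ui = \mathcal{P}$, it suffices to show the map $\Lambda \mapsto \bigcup_{\lambda \in \Lambda} A_\lambda$ from $2^{\{1,\dots,n\}}$ to $\ui$ is a bijection. Surjectivity is the characterization just proved; injectivity follows because each block $A_i$ is nonempty (a partition has nonempty parts), so if $\Lambda \neq \Lambda'$, picking $i$ in the symmetric difference and any point $x \in A_i$ shows the two unions differ at $x$ (here we use disjointness to know $x$ lies in no other block). Hence $|\ui| = |2^{\{1,\dots,n\}}| = 2^n$.

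For the final assertion about unseparability, one direction is trivial: $\u \subseteq \ui$, so if $a,b$ are separable in $\u$ then the separating set already lies in $\ui$, hence they are separable in $\ui$; contrapositively, unseparable in $\ui$ implies unseparable in $\u$. For the converse, suppose $a,b$ are unseparable in $\u$. Since $\u$ is a partition covering $X$, there is a unique block $A_i$ with $a \in A_i$; if $b \notin A_i$ then $A_i$ itself separates $a$ from $b$ in $\u$, a contradiction, so $a,b$ lie in the same block $A_i$. Now take any $A \in \ui$; by the characterization $A = \bigcup_{\lambda \in \Lambda} A_\lambda$, so $a \in A$ iff $i \in \Lambda$ iff $b \in A$. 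Thus no element of $\ui$ separates $a$ from $b$, i.e.\ they are unseparable in $\ui$.

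The only real content is the first step --- verifying that $\mathcal{P}$ is an algebra --- and even that is routine given that $\u$ is a partition; the main obstacle, if any, is simply being careful that the complement and intersection computations genuinely use disjointness and the covering property of the partition, rather than slipping in an unjustified claim. Everything after that is bookkeeping with the explicit description $\ui = \mathcal{P}$.
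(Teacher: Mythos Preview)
Your proof is correct and follows essentially the same route as the paper: establish that $\ui$ coincides with the collection of all unions of partition blocks, then read off the cardinality and the unseparability claim from that description. The only noteworthy differences are presentational: the paper defers the ``only if'' direction of the characterization to a citation (Billingsley), whereas you verify directly that $\mathcal{P}$ is an algebra and invoke Lemma~\ref{LemmaAlgebraContains}; and you are more careful than the paper about the injectivity of $\Lambda \mapsto \bigcup_{\lambda\in\Lambda} A_\lambda$ in the cardinality count (the paper's binomial-sum argument tacitly assumes distinct index sets give distinct unions). Your argument is therefore slightly more self-contained, but not a genuinely different approach.
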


\begin{proof}
   Clearly, if  $A$  has the form as in Equation~\ref{EquationUnionPartition}, then it can be constructed by unions and is thus an element of \,$\ui$.
   %
   By Lemma~\ref{LemmaUiPreserved}, $\ui$ is an algebra and a sigma algebra. It is well known (\cite{Billingsley1995}, P.2.21) that if $A\in\ui$ then $A$ is a  union of some elements in the partition, hence $A$ satisfies Equation~\ref{EquationUnionPartition} for some $\Lambda$ subset of $\{1,2,...,n\}.$
   To find $|\ui|$, note that every set $A \in \mathcal{U}_{\infty}$ can be written as unions of $k$ sets in $\u $ and $k \in \{0,1,...,n\}$. The number of ways to select such possible sets is given by $ \binom{n}{0} + \binom{n}{1} + \mathellipsis \binom{n}{n} = 2^{n}.$

   Finally, suppose that $a,b$ are unseparable in \suu. If $a,b$ were separable in \,$\ui$, without loss of generality, there would exists a set $A$ in \,$\ui$ such that $a\in A$ and $b\notin A.$ Since $A$ satisfies Equation~\ref{EquationUnionPartition} for some $\Lambda$ subset of $\{1,2,...,n\},$  there would exist only one $k\in\Lambda$ such that $a\in A_k, b\notin A_k,$ contradicting the fact that $a,b$ are unseparable in \,$\u.$ Conversely, if $a,b$ are separable in \suu, since \,$\u\sub\ui$, by definition of separability, $a,b$ are necessarily separable in \,$\ui,$ the converse of the last statement finishes the proof of this lemma.
\end{proof}

When $X=\{1,2,...,n\}$ one might think that in Lemma~\ref{LemmaCharactizeElementsAlgebraPartition} the only choice for \su that provides \,$\ui=2^X$ is the family of all singletons; however, this is only one case since the family \su in Theorem~\ref{TheoremXSeparableUiPower} does not need to be a partition of $X,$ as in the example $X=\{1,2,3\}, \,\u=\{\{1\},\{2\}\}.$ Note also that in Lemma~\ref{LemmaCharactizeElementsAlgebraPartition}, $X$ might be infinite.

\begin{lemma}\label{LemmaAlgebraClassEquivalence}
   Let $X$ be an arbitrary set and \sa an  algebra of subsets of $X.$  Consider $\mathcal{P}$ the partition induced by the equivalence relationship given in Lemma~\ref{LemmaEquivalenceRelationship}.  Then
   \begin{align}\label{EquationFormClassEquivalence}
       [a]= \bigcap_{\substack{A\in\a\\ a\in A}} A.
   \end{align}
   Equation~\ref{EquationFormClassEquivalence} does not necessary hold if $\a$ is not an algebra.
\end{lemma}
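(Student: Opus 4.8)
The plan is to prove the equality $[a] = \bigcap_{A \in \a,\, a\in A} A$ by double inclusion, then give a small counterexample for the case when $\a$ is not an algebra. Throughout, write $I_a = \bigcap_{A\in\a,\, a\in A} A$ for the intersection on the right-hand side; note $I_a$ is well-defined and nonempty since $X\in\a$ and $a\in X$, so the index set is nonempty and $a\in I_a$.

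For the inclusion $[a]\subseteq I_a$: let $b\in[a]$, so $a$ and $b$ are unseparable in $\a$. Take any $A\in\a$ with $a\in A$. Since $a,b$ are unseparable, we cannot have $a\in A,\, b\notin A$; hence $b\in A$. As $A$ was an arbitrary member of $\a$ containing $a$, we get $b\in I_a$. This direction does not use that $\a$ is an algebra — only the definition of unseparability.

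For the reverse inclusion $I_a\subseteq[a]$: let $b\in I_a$; I must show $a$ and $b$ are unseparable in $\a$, i.e. that no $A\in\a$ separates them. Suppose some $A\in\a$ has $a\in A$ and $b\notin A$; then $A$ is one of the sets in the intersection defining $I_a$, so $b\in I_a\subseteq A$, a contradiction. The remaining case is $a\notin A$ and $b\in A$ for some $A\in\a$. \textbf{Here is where the algebra hypothesis is essential}: since $\a$ is closed under complements, $\sc{A}\in\a$, and $\sc{A}$ satisfies $a\in\sc{A}$ while $b\notin\sc{A}$. Thus $\sc{A}$ is one of the sets in the intersection defining $I_a$, forcing $b\in I_a\subseteq\sc{A}$, again a contradiction. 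So no member of $\a$ separates $a$ from $b$, i.e. $b\in[a]$. Combining both inclusions gives $[a]=I_a$.

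Finally, for the remark that the equality can fail when $\a$ is not an algebra: I would exhibit a small family where the right-hand intersection is strictly smaller than the equivalence class. Take $X=\{1,2,3\}$ and $\u=\{\{1,2\}\}$ (not an algebra, since it is not closed under complements). Here $2$ and $3$ are unseparable in $\u$ (no member of $\u$ distinguishes them: the only set $\{1,2\}$ contains $2$ but also... actually one must check carefully — $\{1,2\}$ contains $2$ and not $3$, so $2$ and $3$ \emph{are} separable). So instead take $\u=\{\{1\}\}$ on $X=\{1,2,3\}$: then $2\sim 3$ since no member of $\u$ contains either, giving $[2]=\{2,3\}$; but the intersection $\bigcap_{A\in\u,\, 2\in A} A$ is an empty intersection, conventionally $X=\{1,2,3\}\neq\{2,3\}$. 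The main obstacle is choosing a clean counterexample that unambiguously illustrates the failure — one should pick $\u$ so that the relevant intersection is over an empty or small index set while the equivalence class is genuinely larger, and state the empty-intersection convention explicitly.
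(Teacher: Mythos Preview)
Your double-inclusion argument for $[a]=\bigcap_{A\in\a,\,a\in A}A$ is correct and essentially identical to the paper's: the forward inclusion uses only the definition of unseparability, and the reverse inclusion uses closure under complements to reduce the case $a\notin A,\,b\in A$ to the case $a\in\sc{A},\,b\notin\sc{A}$.

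The difference is in the counterexample. The paper takes $X=\{1,2,3,4\}$ and $\u=\{\{1\},\{1,2\},\{1,2,3,4\}\}$, computes $[3]=\{3,4\}$, and observes that $\bigcap_{A\in\u,\,3\in A}A=\{1,2,3,4\}$; here the index set of the intersection is nonempty, so no convention is needed. Your example $\u=\{\{1\}\}$ on $X=\{1,2,3\}$ with the element $2$ does work, but it leans entirely on the empty-intersection convention, which you yourself flag as needing explicit justification; a reader who adopts a different convention (or leaves the empty intersection undefined) would not be persuaded. Note also a small slip: you announce you will show the intersection is \emph{strictly smaller} than the equivalence class, but in your example (and in the paper's) it is strictly \emph{larger}. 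Your false start with $\u=\{\{1,2\}\}$ should be removed in a clean write-up. The paper's example is preferable because it sidesteps the convention issue altogether.
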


\begin{proof}
    Take $b\in [a]$ and $A\in\a$  such that $a\in A.$ If $b\notin A$, then $a$ and $b$ are separable, contradicting the fact that $a\sim b,$ therefore $b\in A$ and hence necessarily $b$ belongs to the right hand side of Equation~\ref{EquationFormClassEquivalence}. Conversely, take an element $b$ in the right hand side of Equation~\ref{EquationFormClassEquivalence}. If $b$ and $a$ were separable then there would exist a set $H\in\a$ such that either $a\in H, \, b\notin H$ or $a\notin H, \, b\in H.$  In either case, by considering the possibility $\sc{H}\in\a$, we conclude that there exists a set $A\in\a$ such that $a\in A, \, b\notin A,$ contradicting the fact that $b\in\bigcap_{A\in\a, a\in A} A.$ Therefore necessarily $b$ and $a$ are unseparable, hence $b\in [a].$ \, For an example where Equation~\ref{EquationFormClassEquivalence} does not necessary hold, consider $X=\{1,2,3,4\}, \,\u=\{\{1\},\{1,2\},\{1,2,3,4\}\}$. The equivalence classes are $[1]=\{1\}, \, [2]=\{2\}$ and $ [3]=[4]=\{3,4\}.$ In this example $\bigcap_{A\in\u\, :\, 3\in A} A =\{1,2,3,4\}\neq[3].$
\end{proof}

\begin{lemma}\label{LemmaAlgebraPartitionAlgebra}
   Let  \,$\a$ be an algebra of subsets of $X.$  Consider \,$\p$ the partition induced by the equivalence relationship given in Lemma~\ref{LemmaEquivalenceRelationship}. If \,$\p$ is finite then \sa is finite and $\p_\infty=\a.$
\end{lemma}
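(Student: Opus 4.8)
The plan is to show two things: first that $\a$ being an algebra whose induced partition $\p$ is finite forces $\a$ itself to be finite, and second that $\p_\infty$ (the algebra generated by the partition $\p$) equals $\a$. Throughout I would write $\p=\{P_1,\dots,P_k\}$ for the (finitely many) equivalence classes $[a]_\a$ under the unseparability relation of Lemma~\ref{LemmaEquivalenceRelationship}.

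First I would prove the containment $\p_\infty\sub\a$. By Lemma~\ref{LemmaAlgebraClassEquivalence}, each class $P_i=[a_i]$ equals $\bigcap_{A\in\a,\, a_i\in A} A$; since $\a$ is an algebra with finitely many classes, I want to argue this intersection can be taken to be \emph{finite}, so that $P_i\in\a$. Indeed, for each class $P_j$ with $j\neq i$ pick (by separability of distinct classes) a set $A_{ij}\in\a$ with $a_i\in A_{ij}$ but $a_j\notin A_{ij}$; replacing $A_{ij}$ by its complement if necessary, we may assume $a_i\in A_{ij}$. Then $\bigcap_{j\neq i}A_{ij}$ is a finite intersection of members of $\a$, hence lies in $\a$, contains $a_i$, and meets no other class, so by Lemma~\ref{LemmaAlgebraClassEquivalence} it equals $P_i$. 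Thus every $P_i\in\a$, so $\p\sub\a$; since $\a$ is an algebra containing $\p$, Lemma~\ref{LemmaAlgebraContains} (applied to $\cnu$ with $\u=\p$) and Definition~\ref{defnlastfamily} give $\p_\infty\sub\a$.

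Next I would prove the reverse containment $\a\sub\p_\infty$, which simultaneously yields finiteness of $\a$. The partition $\p$ is finite, so by Lemma~\ref{LemmaCharactizeElementsAlgebraPartition} we have $\p_\infty=\{\bigcup_{\lambda\in\Lambda}P_\lambda : \Lambda\sub\{1,\dots,k\}\}$, a family of size $2^k$. It remains to check that an arbitrary $A\in\a$ is a union of classes. If $a\in A$ and $b\sim a$ (i.e.\ $b\in[a]$), then $a,b$ are unseparable in $\a$, so $b\notin A$ would separate them — contradiction; hence $[a]\sub A$. Therefore $A=\bigcup_{a\in A}[a]$ is a union of equivalence classes, so $A\in\p_\infty$. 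This gives $\a\sub\p_\infty$, and combined with the previous paragraph, $\a=\p_\infty$; in particular $|\a|=2^k<\infty$, so $\a$ is finite.

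The main obstacle is the first paragraph: Lemma~\ref{LemmaAlgebraClassEquivalence} only gives each class as a possibly \emph{infinite} intersection of members of $\a$, and an algebra need not be closed under infinite intersections, so one cannot conclude directly that $P_i\in\a$. The key idea is to exploit that there are only finitely many \emph{other} classes to separate $a_i$ from, so finitely many sets from $\a$ already suffice to carve out $P_i$ — this is where the hypothesis that $\p$ is finite does the real work. The rest is routine bookkeeping with the definition of separability and the earlier lemmas.
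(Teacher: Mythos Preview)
Your proof is correct. The paper establishes the same two containments but in the opposite order: it first shows $\a\sub\p_\infty$ (by writing each $E\in\a$ as $\bigcup_k(E\cap A_k)$ and using Lemma~\ref{LemmaAlgebraClassEquivalence} to argue each nonempty $E\cap A_k$ equals $A_k$), deduces from this that $\a$ is finite since $|\p_\infty|=2^k$, and only \emph{then} concludes $\p\sub\a$ by noting that the intersection in Lemma~\ref{LemmaAlgebraClassEquivalence} is now automatically finite. Your route handles $\p\sub\a$ directly, without first establishing that $\a$ is finite: you separate $a_i$ from one representative of each of the finitely many other classes, so only $k-1$ sets are needed. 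This makes the role of the hypothesis ``$\p$ is finite'' more transparent, and your argument for $\a\sub\p_\infty$ (any $A\in\a$ containing $a$ must swallow $[a]$, else it separates) is more elementary than the paper's detour through Lemma~\ref{LemmaAlgebraClassEquivalence}. Both approaches are short; yours is slightly more self-contained.
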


\begin{proof}
   Consider $\p=\{A_1,...,A_n\}$ be the partition induced by the relationship in Lemma~\ref{LemmaEquivalenceRelationship} and take $E\in\a.$  Clearly
   \[E=\bigcup_{k=1}^n (E \cap A_k) = \bigcup_{\substack{k\le n\\ E \cap A_k\neq\emptyset}} (E \cap A_k).
   \]\
   Take now $k\le n$ such that $E \cap A_k\neq\emptyset.$ Consider any $a\in A_k.$ Since $E \cap A_k\neq\emptyset$, there exists $b\in E \cap A_k.$ Since $b\in A_k, \, a\in A_k$ and $A_k$ is an equivalence class, we conclude that $a\sim b.$ By Lemma~\ref{LemmaAlgebraClassEquivalence} we have
   \begin{align}\label{EquationEqualClassesEquivalence}
       [a]= \bigcap_{\substack{A\in\a\\ a\in A}} A \ \, = \ \, [b] = \bigcap_{\substack{B\in\a\\ b\in B}} B.
   \end{align}
   Since $a\in[a],$  Equation~\ref{EquationEqualClassesEquivalence} implies that $a\in \bigcap_{B\in\a, b\in B} B.$  Therefore $a\in E$ as $E\in \a$ and $b\in E.$ \, Since $a$ was arbitrary in $A_k$ we have that $A_k\sub E\cap A_k.$ Since the other containment is always true, we conclude that $A_k= E\cap A_k$, therefore
   \[ E= \bigcup_{\substack{k\le n\\ E \cap A_k\neq\emptyset}} (E \cap A_k) = \bigcup_{\substack{k\le n\\ E \cap A_k\neq\emptyset}} A_k.
   \]
   Given that each $A_k\in\p$, we conclude that $E\in\p_\infty,$ hence \,$\a\sub\p_\infty.$ By Lemma~\ref{LemmaCharactizeElementsAlgebraPartition}, \,$\p_\infty$ is finite, therefore \sa is finite.
   Conversely, take $A_k\in\p.$ Since \sa is finite and $A_k$ is an equivalence class, Lemma~\ref{LemmaAlgebraClassEquivalence} implies that $A_k$ is a finite union of elements of \,$\a$, thus $A_k\in\a$, therefore \,$\p\sub\a.$ By Lemma~\ref{LemmaAlgebraContains}, $\p_\infty\sub\a.$   Hence $\p_\infty=\a.$
\end{proof}

\begin{example}
From Example~\ref{ExampleAlgebraPartition}, when $X = \{1,2,3,4,5\}$ and $\mathcal{A}$ is the algebra $\{\emptyset,\{1,2\},\{3,4\},\{5\},\{1,2,3,4\},\{1,2,5\}, \{3,4,5\},X\},\ \ 1$ and $2$ are unseparable, $3$ and $4$ are unseparable, but $5$ is separable from all other elements, hence the equivalence classes are \, $[1]=\{1,2\}$, \, $[3]=\{3,4\}$ and $[5]=\{5\}.$ Hence the partition induced by the equivalence classes is $\p=\Big\{\{1,2\},\{3,4\},\{5\}\Big\}.$ By Lemma~\ref{LemmaAlgebraPartitionAlgebra}, \, $\p_\infty=\mathcal{A}$. Indeed,\\
   \begin{align}
      \p  &= \Big\{\{1,2\},\{3,4\},\{5\}\Big\}, \nonumber \\
      \p_1  &= \p  \cup \Big\{\{1,2,3,4\},\,\{1,2,5\},\emptyset,\{3,4,5\}\Big\}, \nonumber \\
      \p_2  &= \p_1  \cup \{X\}, \nonumber \\
      \p_\infty &= \{\emptyset,\{1,2\},\{3,4\},\{5\},\{1,2,3,4\},\{1,2,5\}, \{3,4,5\},X\}  \nonumber \\
      \p_\infty &= \mathcal{A}.  \nonumber
   \end{align}

\end{example}

\begin{theorem}\label{TheoremPartitionsAlgebras}
   Let $X$ be a finite universe. Let $\mathbf{A}$ be the collection of nonempty algebras of subsets of $X$, and $\mathbf{P}$ the collection of partitions of the set $X$. Then each element in $\mathbf{A}$ corresponds to a unique element in $\mathbf{P}$, i.e.  there exists a bijective mapping $f: \mathbf{A} \longrightarrow \mathbf{P}$.
\end{theorem}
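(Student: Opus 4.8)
The plan is to construct the bijection $f$ explicitly using the equivalence relation from Lemma~\ref{LemmaEquivalenceRelationship}, and then verify it is well-defined, injective, and surjective. Given an algebra $\a \in \mathbf{A}$, define $f(\a) = \p$, where $\p$ is the partition of $X$ into the equivalence classes $[a]_\a$ induced by the unseparability relation of $\a$. Since $X$ is finite, this partition is finite, so Lemma~\ref{LemmaAlgebraPartitionAlgebra} applies and tells us $\p_\infty = \a$; this is the crucial fact that makes everything work. (One small preliminary point: we should observe that for finite $X$ every algebra is nonempty and in fact contains $\emptyset$ and $X$, and that the unseparability classes genuinely partition $X$, which is immediate from Lemma~\ref{LemmaEquivalenceRelationship}.)

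For injectivity, suppose $f(\a) = f(\b) = \p$ for algebras $\a, \b$. Then by Lemma~\ref{LemmaAlgebraPartitionAlgebra} we get $\a = \p_\infty = \b$, so $f$ is injective with essentially no extra work. For surjectivity, let $\p = \{A_1, \dots, A_n\} \in \mathbf{P}$ be an arbitrary partition. Set $\a = \p_\infty$; by Lemma~\ref{LemmaUiPreserved} (or Lemma~\ref{LemmaUiIsAlgebra}) this is an algebra. It remains to check that $f(\a) = \p$, i.e. that the unseparability classes of $\p_\infty$ are exactly the blocks $A_i$. But this is precisely the last assertion of Lemma~\ref{LemmaCharactizeElementsAlgebraPartition}: $a, b$ are unseparable in $\p_\infty$ if and only if they are unseparable in $\p$, and two elements are unseparable in a partition exactly when they lie in the same block. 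Hence $f(\p_\infty) = \p$, giving surjectivity.

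I expect the main obstacle to be essentially bookkeeping rather than a genuine difficulty: the real content has already been isolated in Lemmas~\ref{LemmaEquivalenceRelationship}, \ref{LemmaCharactizeElementsAlgebraPartition}, and \ref{LemmaAlgebraPartitionAlgebra}, so the theorem is a matter of assembling them correctly and being careful that the map is defined on all of $\mathbf{A}$ (needing finiteness of $X$ so that the induced partition is finite and Lemma~\ref{LemmaAlgebraPartitionAlgebra} is applicable). If I wanted a cleaner write-up, I would phrase it as exhibiting a two-sided inverse: the map $g: \mathbf{P} \to \mathbf{A}$ given by $g(\p) = \p_\infty$ satisfies $g \circ f = \mathrm{id}_{\mathbf{A}}$ by Lemma~\ref{LemmaAlgebraPartitionAlgebra} and $f \circ g = \mathrm{id}_{\mathbf{P}}$ by the unseparability clause of Lemma~\ref{LemmaCharactizeElementsAlgebraPartition}, which dispatches both injectivity and surjectivity at once.
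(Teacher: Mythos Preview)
Your proposal is correct and follows essentially the same route as the paper: define $f(\a)$ as the partition into unseparability classes, use Lemma~\ref{LemmaAlgebraPartitionAlgebra} for injectivity via $\a = \p_\infty = \b$, and use the last clause of Lemma~\ref{LemmaCharactizeElementsAlgebraPartition} for surjectivity via $f(\p_\infty)=\p$. Your added remarks on finiteness and the two-sided inverse $g(\p)=\p_\infty$ are nice clarifications but do not depart from the paper's argument.
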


\begin{proof}
   Let $\a \in \mathbf{A}$ be an arbitrary non-empty algebra; as previously, we define the relationship on $\a$ by $a \sim b$ if and only if $a$ and $b$ are unseparable in \,$\a$. By Lemma~\ref{LemmaEquivalenceRelationship}, this becomes an equivalence relationship in $X$, hence it induces a partition in $X$. Denote this partition by $\p_{\a}$. Define the function $f: \mathbf{A} \longrightarrow \mathbf{P}$ by
   \[
   f\left(\a\right) = \p_{\a}
   \]
   \indent Consider an arbitrary partition \,$\p = \{A_{1}, \mathellipsis, A_{n}\}$ of the universe $X$. Form the family \,$\u = \{A_{1},\mathellipsis,A_{n}\}$ and consider the algebra \,$\ui$.  By Lemma~\ref{LemmaCharactizeElementsAlgebraPartition}, if a pair of elements $a,b$ are unseparable in \,$\ui$ then they are unseparable in \,$\u.$ Therefore the classes of equivalence given by this relationship are precisely the elements of the partition \,$\p.$ The way we are defining our function, we conclude that  $f\left(\u_{\infty}\right) = \p.$ Therefore $f$ is surjective.  Take \,$\a, \,\b\in\mathbf{A}$ with \,$f(\a)=\p_\a, \ \,f(\b)=\p_\b$ and \,$\p_\a=\p_\b.$  By using  Lemma~\ref{LemmaAlgebraPartitionAlgebra}, we conclude that
   \begin{align}
     \a = C_{\infty}\left(\p_\a\right) = C_{\infty}\left(\p_\b\right) = \b.\nonumber
   \end{align}
   Hence, $f$ is injective.
\end{proof}

For another proof of this theorem and a further discussion of these connections, see the upcoming paper~\cite{GaBaMoBoPoCe}.
This theorem has also led to a simple method of determining how many distinct algebras exist for a given universe. If we recall that the $n^{th}$ Bell number represents the total number of partitions of the set $\{1,2,...,n\}$ ~\cite{Comtet1994}, the following corollary is an immediate consequences of Theorem~\ref{TheoremPartitionsAlgebras}.

\begin{corollary}\label{corcountalg}
Let $X$ be a universe of order $n \in \mathds{N}$. Then there exist exactly $B_{n}$ distinct non-empty algebras of subsets of $X$, where $B_{n}$ is the $n^{th}$ Bell number.
\end{corollary}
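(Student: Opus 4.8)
The plan is to invoke Theorem~\ref{TheoremPartitionsAlgebras} directly, since Corollary~\ref{corcountalg} is advertised as an ``immediate consequence.'' The key observation is that Theorem~\ref{TheoremPartitionsAlgebras} furnishes a bijection $f: \mathbf{A} \longrightarrow \mathbf{P}$ between the collection $\mathbf{A}$ of nonempty algebras of subsets of $X$ and the collection $\mathbf{P}$ of partitions of $X$. Consequently $|\mathbf{A}| = |\mathbf{P}|$, so it only remains to identify $|\mathbf{P}|$.

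First I would note that $X$ has order $n$, so without loss of generality $X = \{1, 2, \ldots, n\}$ (the notions of algebra and partition are invariant under relabeling the universe, so the count depends only on $|X|$). By the definition of the Bell number $B_n$ recalled just before the corollary — namely, $B_n$ is the total number of partitions of the set $\{1, 2, \ldots, n\}$, with the citation to~\cite{Comtet1994} — we have $|\mathbf{P}| = B_n$. Chaining the two equalities gives $|\mathbf{A}| = |\mathbf{P}| = B_n$, which is exactly the claim.

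There is essentially no obstacle here; the only point deserving a sentence of care is that Theorem~\ref{TheoremPartitionsAlgebras} is stated for \emph{finite} $X$, which is precisely the hypothesis of the corollary ($n \in \mathds{N}$ forces $|X| = n < \infty$), so the bijection applies. One might also remark explicitly that the empty algebra is excluded on both sides consistently — Theorem~\ref{TheoremPartitionsAlgebras} speaks of \emph{nonempty} algebras, and every partition of a nonempty finite set is itself nonempty, so the count matches with no off-by-one correction. (In fact, since $\emptyset \in \a$ and $X = \sc{\emptyset} \in \a$ for any algebra $\a$ of subsets of $X$, no algebra of subsets of a nonempty $X$ is empty, so the ``nonempty'' qualifier is automatic.) Thus the entire proof is the two-line composition of the bijection with the combinatorial definition of $B_n$.
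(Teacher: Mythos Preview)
Your proposal is correct and matches the paper's intent exactly: the paper states that the corollary is an immediate consequence of Theorem~\ref{TheoremPartitionsAlgebras} together with the definition of $B_n$, and does not supply a separate proof. Your added remarks about finiteness and the ``nonempty'' qualifier are sound and more careful than the paper itself.
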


This corollary shows that, although there can be an extremely large number of families of subset of $X$, they only generate a small collection of algebras. Of the $65,536=2^{2^{4}}$ families of subsets of $\{1,2,3,4\}$, there are only $B_4=53$ distinct algebras generated. In general, the connections between partitions of the universe and the algebras have led to several of the major results of this paper.

If we start with an algebra \saa, Lemma~\ref{LemmaAlgebraPartitionAlgebra} provides a partition that generates the original algebra \saa. Suppose now that we start with a general family \su that is not necessarily an algebra. The following lemma provides a partition, based on the initial family \suu, that generates an algebra that contains \,$\u,$ in other words, to achieve a partition that generates an algebra of sets, it is not necessary to look at the algebra of sets but to look at a smaller family that generates such algebra.

 Let \,$\u=\{E_1,E_2,...,E_n\}$ be a family of subsets of $X$.  To shorten notation, let  $I_n=\{1,2,...n\}.$ \, For $I\sub I_n$ define
   \begin{align}\label{EquationAIndices}
       A_I:=\big\{ x\in X \, : \, I=\{i\in I_n\, :\, x\in E_i\}\big\}.
   \end{align}
 $A_I$ consists of the elements $x$ of $X$ such that $I$ is exactly the set of indices that correspond to the  members of \su where $x$ belongs. Note that
 \begin{align}\label{EquationAIUnionMinusIntersections}
    \defi{A_I}
         {\ds{\bigcap_{i\in I} E_i-\bigcup_{i\notin I} E_i} & : \, \mathrm{\ if\ } I\neq\emptyset,\, I\neq I_n  \\[6mm]
         \ds{\bigcap_{i=1}^n E_i} & : \, \mathrm{\ if\ } I=I_n,  \\[3mm]
         \ds{X-\bigcup_{i=1}^n E_i} & : \, \mathrm{\ if\ } I=\emptyset.}
 \end{align}

\begin{theorem}\label{TheoremAnyFamilyPartitionAlgebra}
   Let \,$\u=\{E_1,E_2,...,E_n\}$ be a family of subsets of $X$ and define $A_I$ by Equation~\ref{EquationAIndices}. If $\h=\left\{ A_I\,:\, I\sub I_n\right\}$ then \sh is a partition of $X$ and $\hs{2^{n}}$ is an algebra that contains \suu.
\end{theorem}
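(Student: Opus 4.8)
The plan is to verify the two assertions separately: first that $\h = \{A_I : I \sub I_n\}$ is a partition of $X$, then that $\hs{2^n}$ is an algebra containing $\u$. For the partition claim, I would argue directly from the defining property in Equation~\ref{EquationAIndices}: every $x \in X$ determines exactly one index set, namely $I(x) = \{i \in I_n : x \in E_i\}$, so $x \in A_{I(x)}$ and $x \notin A_J$ for any $J \neq I(x)$. This simultaneously shows the $A_I$ cover $X$ and are pairwise disjoint. (Sets $A_I$ that happen to be empty may simply be discarded, or one notes the definition of partition in use tolerates them; I would remark on this briefly.)

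For the second claim, the key observation is that each $E_j$ is itself a union of blocks of the partition: precisely $E_j = \bigcup_{I \ni j} A_I$, since $x \in E_j$ iff $j \in I(x)$. This realizes every member of $\u$ as a union of elements of $\h$, hence $\u \sub \hi$ by Lemma~\ref{LemmaCharactizeElementsAlgebraPartition} (or just by direct construction via unions). Since $\h$ is a partition with $|\h| \le 2^n$ blocks, Lemma~\ref{LemmaCharactizeElementsAlgebraPartition} tells us $\hi$ is an algebra and $|\hi| = 2^{|\h|} \le 2^{2^n}$. It then remains to show the construction stabilizes by step $2^n$, i.e. $\hs{2^n} = \hi$.

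The main work — and the main obstacle — is this stabilization bound: showing that every union $\bigcup_{\lambda \in \Lambda} A_\lambda$ of partition blocks is reached within $2^n$ construction steps. The crude bound from Lemma~\ref{LemmaUnIsFinite} only guarantees stabilization after $|\hi| \le 2^{2^n}$ steps, which is far too weak. I would instead argue that union-length doubles each step: starting from the blocks $A_I$ themselves (length-$1$ unions, present in $\hs{0} = \h$), one step of taking pairwise unions produces every union of at most $2$ blocks, a second step every union of at most $4$ blocks, and after $k$ steps every union of at most $2^k$ blocks. Since $\h$ has at most $2^n$ blocks, any element of $\hi$ is a union of at most $2^n$ blocks, hence lies in $\hs{k}$ once $2^k \ge 2^n$, i.e. $k = n$. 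Actually this gives the stronger bound $\hs{n}$; since $n \le 2^n$ we certainly get $\hs{2^n} = \hi$, so $\hs{2^n}$ is an algebra. One subtlety to address carefully: the doubling argument needs $\emptyset \in \hs{1}$ (so that "union of at most $2^k$ blocks" can be padded to exactly a union without changing it), which holds since $\emptyset = A_I \cap A_J$ for distinct $I, J$, or $\emptyset = A_{I(x)} \cap \sc{E_i}$ appropriately; I would insert this observation before running the induction. Finally, combining $\u \sub \hi = \hs{2^n}$ with the fact that $\hs{2^n}$ is an algebra completes the proof.
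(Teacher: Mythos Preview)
Your proposal is correct and follows essentially the same architecture as the paper: establish that $\h$ is a partition via the map $x \mapsto I(x) = \{i : x \in E_i\}$, realize each $E_k$ as the union $\bigcup_{I \ni k} A_I$ of partition blocks, and then argue stabilization by step $2^n$.

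The only real divergence is in the stabilization step, where you work harder than necessary. You invoke a doubling argument (unions of at most $2^k$ blocks lie in $\hs{k}$), which yields the sharper conclusion $\hi = \hs{n}$ and then observe $n \le 2^n$. The paper instead uses the simpler linear observation: any element of $\hi$ is (by Lemma~\ref{LemmaCharactizeElementsAlgebraPartition}) a union of at most $2^n$ blocks, and a union of $m$ sets can be built one set at a time in at most $m$ steps, so $\hi \sub \hs{2^n}$ directly. This avoids the $\emptyset \in \hs{1}$ bookkeeping you flagged. Your stronger bound is not wasted, however: it is exactly what the paper establishes afterward as Corollary~\ref{CorollaryAnyFamilyPartitionAlgebra}, there derived from Theorem~\ref{TheoremMinimumNumberOfSteps}. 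So your argument effectively folds that corollary into the theorem, whereas the paper separates the two and keeps the proof of Theorem~\ref{TheoremAnyFamilyPartitionAlgebra} minimal.
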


\begin{proof}
   Take $k\le n$. We will prove that
   \begin{align}\label{EquationSetEIndices}
      E_k = \bigcup_{\substack{I\sub I_n \\ k\in I\,\,}} A_I.
   \end{align}
   Take $x_0\in E_k.$  If we define $I_0=\{i\in I_n\,:\, x_0\in E_i\},$ then $k\in I_0$ and $x_0\in A_{I_0}.$ Hence $x_0\in \bigcup_{I\sub I_n\,:\, k\in I} A_I.$ and since $x_0$ was arbitrary in $E_k$, we conclude that  $E_k\sub \bigcup_{I\sub I_n\,:\, k\in I} A_I.$
   Conversely, take $x_0\in \bigcup_{I\sub I_n\,:\, k\in I} A_I.$ Then there is $I_0\sub I_n$ with $k\in I_0$ and such that $x_0\in A_{I_0}.$ Therefore $x_0\in E_k,$ this implies that $\bigcup_{I\sub I_n\,:\, k\in I} A_I \sub E_k.$  Therefore Equation~\ref{EquationSetEIndices} holds.\\[1mm]
   \indent Equation~\ref{EquationSetEIndices} tells us that $E_k$ can be constructed from \,$\h$ in at most $m$ steps where $m$ is the number of subsets of $I_n$ that contain $k$; this number does not exceed $2^{n}.$ Hence \,$\u\sub\hs{2^{n}}.$ \, Note that \sh has at most $2^n$ elements.

   It is clear that if $x_0\in A_I\cap A_J,$ then $I=\{i\in I_n\, :\, x_0\in E_i\}$ and $J=\{i\in I_n\, :\, x_0\in E_i\},$ hence $I=J.$ Therefore \,$\h$ is a  family of disjoint sets that actually form a partition of $X$, since if $x\in X$ and $I_x=\{k\in I_n\,:\, x\in E_k\}$ then $x\in A_{I_x}.$ By Lemma~\ref{LemmaCharactizeElementsAlgebraPartition}, any element in \,$\hi$ can be expressed as a union of at most $2^n$ elements in \shh. Therefore  \,$\hi\sub\h_{2^{n}}\sub\hi$,  and thus $\hs{2^{n}}$ is an algebra.
\end{proof}

We will see later that for the family \sh that appears in Theorem~\ref{TheoremAnyFamilyPartitionAlgebra}, in fact $\hn$ is an algebra.
In addition to having deep connections with algebras, there are a number of desirable properties that partitions posses. These include easy characterizations of the sets in the \textit{last family}, as well as a determination of the number of steps required to construct a given family. We will analyze these two points.

\begin{definition}
   For any two families \,$\u,\h$ of subsets of $X$, we define
   \[
      S_{\u}\left(\h\right) = \inf\left\{n \ge 0 \, : \,  \h \sub \un\right\},
   \]
   understanding that if for any $n\ge 0, \,\h \not\sub \un$\, then $S_{\u}\left(\h\right) =\infty.$
\end{definition}

Thus, $S_{\u}\left(\h\right)$ represents the number of steps required to construct the family \sh from the family \suu. If $S_{\u}\left(\h\right)=k$, this means that $ \h \sub \u_k$ but $ \h \not\sub \us{k-1}.$ Typically, we would like to evaluate $S_\u(\ui).$

\begin{definition}
   Let \,$\u = \{A_{1},A_{2},...,A_{n}\}$ be a finite family of subsets of a universe $X$. For any $k$ and $m$ with $0\le k, m\le n,$ define
   \begin{align}\label{kUnions}
      U_k\left(\u\right) = \left\{ \bigcup_{i\in I} A_i \, : \, I\sub I_n, \,\, |I|=k\right\}
   \end{align}
   \begin{align}\label{EquationSetBm}
      B_m = \big\{ i\in \{0,1...,n\} \, : \, U_i\left(\u\right) \sub \us{m} \big\}.
   \end{align}
   In other words, $U_k\left(\u\right)$ represents all possible unions of $k$ elements in \suu. Any element in $U_k\left(\u\right)$ will be called a \textit{k-union}.
\end{definition}
Note that $k\in B_{m}$ if and only if $\us{m}$ contains all possible $k-$unions. For the particular case when $m=0, \ k\in B_0$ if and only if \,$\u$ contains all possible $k-$unions. Note also that $U_0\left(\u\right) = \left\{\emptyset\right\}$,  thus, $0 \in B_{m}$ if and only if $\emptyset \in \mathcal{U}_{m}$.

\begin{example}
   Consider $\u  = \{A_{1},A_{2},\mathellipsis,A_{10}\}$ a partition of $X$.  Then
   \begin{align*}
                      B_0 &= \{1\}                   & &\mini{.65}{as \,$\u_0$ contains all 1-unions (singles),}
       \\ \cline{2-4} B_1 &= \{0, 1, 2, 9\}          & &\mini{.65}{as \,$\uo$ contains $\emptyset$, all 1-unions, 2-unions and all 9-unions (complements of 1-unions),}
       \\ \cline{2-4} B_2 &= \{0, 1, 2, 3, 4, 8, 9, 10\} & &\mini{.65}{as \,$\uoo$ contains $\emptyset$, all 1-unions, 2-unions, 3-unions, 4-unions, , 8-unions (complements of 2-unions), and alll 9-unions as well as the 10-union.}
       \\ \cline{2-4} B_3 &= \{0, 1, ..., 10\}        & &\mini{.65}{ as \,$\uooo$ contains all possible $k$-unions, $k=0,1,2,...,10$,}
       \\ \cline{2-4} B_4 &= B_3.  & &
   \end{align*}
\end{example}

\begin{theorem}\label{TheoremMinimumNumberOfSteps}
   Consider $\u = \{A_{1},A_{2},...,A_{n}\}$ a partition of $X$. If $n \ge 4$, then
   \begin{align}\label{FormulaMinimumNumberOfSteps}
      \defi{S_{\u}(\ui)}{\ds{\left\lfloor\log_2 n \right\rfloor } & : \, \mathrm{\ if\ \,} {n \leq 1 + 3\cdot2^{\left\lfloor \log_2 n \right\rfloor - 1}},  \\[3mm]
                         \ds{\left\lceil\log_2 n\right\rceil } & : \, \mathrm{\ if\ \, } {n > 1 + 3\cdot2^{\left\lfloor \log_2 n \right\rfloor - 1}}.}
   \end{align}
\end{theorem}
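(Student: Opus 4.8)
The plan is to reduce the statement to an explicit description of the sets $B_m$ from Equation~\ref{EquationSetBm}. By Lemma~\ref{LemmaCharactizeElementsAlgebraPartition}, every element of $\ui$ is a union of blocks of the partition, i.e.\ a $k$-union (Equation~\ref{kUnions}) for some $0\le k\le n$. Hence $\us{m}=\ui$ if and only if $\us{m}$ contains \emph{all} $k$-unions for every $k$, that is, if and only if $B_m=\{0,1,\dots,n\}$; and since the $\us{m}$ increase, $S_{\u}(\ui)=\min\{m\ge 0: B_m=\{0,1,\dots,n\}\}$. So the whole theorem rests on computing the $B_m$.

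The core lemma I would prove, by induction on $m$, is the exact identity
\[
\us{m}=\bigcup_{k\in B_m}U_k(\u),
\]
with $B_0=\{1\}$, $B_1=\{0,1,2,n-1\}$, and, for $m\ge 2$,
\[
B_m=\{0,1,\dots,\min(2^m,n)\}\;\cup\;\{\,n-\min(2^{m-1},n),\dots,n\,\}.
\]
The induction uses three elementary facts about one construction step applied to $k$-unions, all true because $\u$ is a partition: the union of a $j$-union and a $k$-union is an $\ell$-union with $\max(j,k)\le\ell\le\min(j+k,n)$; the complement of a $k$-union is an $(n-k)$-union; and the intersection of a $j$-union and a $k$-union is an $\ell$-union with $\max(0,j+k-n)\le\ell\le\min(j,k)$. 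Moreover each rule is tight in the strong form that, given $j,k$ and an admissible $\ell$, \emph{every} set of $\ell$ blocks is realized — one just splits the target set into a $j$-part and a $k$-part — so if $j,k\in B_{m-1}$ then all of $U_\ell(\u)$ lies in $\us{m}$ for each admissible $\ell$. Feeding the two-interval form of $B_{m-1}$ through these rules, and checking that the union and the intersection of two ``high'' counts either exceed $n$ or fall back into the part of the range already covered, yields precisely the claimed $B_m$. I expect this book-keeping — carried out by splitting each pair $(j,k)$ into the cases low/low, low/high, high/high, plus a direct check of the base steps $m=0,1,2$ — to be the main obstacle; the only real subtlety is the asymmetry at $m=1$: a partition with $n\ge 3$ blocks cannot produce $X$ (the $n$-union) in a single step, so $n\notin B_1$ even though $n-1\in B_1$.

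Given the formula for $B_m$, the rest is arithmetic. For $n\ge 4$ the sets $B_0$ and $B_1$ have at most four elements, so neither equals $\{0,\dots,n\}$ and $S_{\u}(\ui)\ge 2$. For $m\ge 2$, $B_m=\{0,\dots,n\}$ precisely when the intervals $[0,\min(2^m,n)]$ and $[n-\min(2^{m-1},n),n]$ jointly cover $\{0,\dots,n\}$; splitting on whether $2^m\ge n$ shows this is equivalent to the single inequality $n\le 1+3\cdot 2^{m-1}$. Therefore
\[
S_{\u}(\ui)=\min\{m\ge 2:\ n\le 1+3\cdot 2^{m-1}\}.
\]

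Finally I would turn this minimum into the stated closed form, writing $L:=\lfloor\log_2 n\rfloor$, so $L\ge 2$ and $2^L\le n\le 2^{L+1}-1$. If $n\le 1+3\cdot 2^{L-1}$, then $m=L$ satisfies the inequality while $m=L-1$ does not — for $L\ge 3$ because $n\ge 2^L=4\cdot 2^{L-2}>1+3\cdot 2^{L-2}$, and for $L=2$ because then $L-1<2$ — so $S_{\u}(\ui)=L=\lfloor\log_2 n\rfloor$. If instead $n>1+3\cdot 2^{L-1}$, then $m=L$ fails but $m=L+1$ works, since $n\le 2^{L+1}-1\le 1+3\cdot 2^{L}$; and $n$ cannot be a power of $2$ in this case (a power of $2$ would satisfy $n=2^L\le 1+3\cdot 2^{L-1}$), hence $\lceil\log_2 n\rceil=L+1$ and $S_{\u}(\ui)=\lceil\log_2 n\rceil$. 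This is exactly \eqref{FormulaMinimumNumberOfSteps}.
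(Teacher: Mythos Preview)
Your proposal is correct and follows essentially the same route as the paper: both reduce the problem to determining the sets $B_m$, obtain (for $m\ge 2$) the two-interval description $B_m=\{0,\dots,2^m\}\cup\{n-2^{m-1},\dots,n\}$, and then read off $S_\u(\ui)$ from the gap condition $n\le 1+3\cdot 2^{m-1}$. The paper simply asserts the formula for $B_k$ (``An inductive argument allows us to conclude the following''), whereas you actually supply that inductive argument via the tightness of the union/intersection/complement rules on $k$-unions and the stronger identity $\us{m}=\bigcup_{k\in B_m}U_k(\u)$; this is a genuine improvement in rigor, but not a different method.
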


\begin{proof}
   An inductive argument allows us to conclude the following,
   \begin{align}
      B_{0} &= \{1\}  \nonumber \\
      B_{1} &= \{0,1,2,n-1\}  \nonumber \\
      B_{2} &= \{0,1,2,3,4,n-2,n-1,n\}  \nonumber \\
      &\vdots \nonumber \\
      B_{k} &= \{0,1,2,...,2^{k}, n-2^{k-1},\mathellipsis,n\}.  \nonumber
   \end{align}
   To determine if $\{0,1,2,...,n\}\sub B_k,$ we define the gap function
   \begin{align}
      g_{n}\left(k\right) &= \left(n - 2^{k-1}\right) - 2^{k}  = n - 3 \cdot 2^{k-1}. \nonumber
   \end{align}
   Note that when \,$\u=\{A_1,A_2,A_3,A_4\}$ and $k=1$, even though $g_4(1)=n - 3\le 1,$ this does not imply that \,$S_{\u}(\ui)\le 1$ as in that particular case $B_1=\{0,1,2,3\}$. In fact in this case $S_{\u}(\ui)=2,$ the problem is that $n=4\notin B_1$.

   When $k\ge 2$, the gap function helps us establishing the following criteria,
   \begin{align}\label{SLessThanKIfandonlyif}
      \u_{k} = \ui \Leftrightarrow \{0,1,2,\mathellipsis,n\} \sub B_k \Leftrightarrow g_n(k)\le 1 \Leftrightarrow S_{\u}(\ui)\le k.
   \end{align}
   Regardless of the value of $k$, if $g_n(k)>1$ then $S_{\u}(\ui)>k.$ We now establish two important bounds on  $S_{\u }\left(\mathcal{U}_{\infty}\right)$. Assume that $n\ge 5.$ Since
   \begin{align}
      g_{n}\left(\left\lfloor \log_{2} n\right\rfloor - 1\right)  =
       n - 3\cdot 2^{\left\lfloor \log_{2} n \right\rfloor - 2}
      \geq n - \frac{3}{4} \cdot 2^{\log_{2} n}  = \frac{n}{4}
      > 1, \nonumber
   \end{align}
   we conclude that $S_{\u }\left(\mathcal{U}_{\infty}\right) > \left\lfloor \log_{2} n\right\rfloor - 1$. Thus,  $S_{\u }\left(\mathcal{U}_{\infty}\right) \geq \left\lfloor \log_{2} n \right\rfloor$.
   Since $n\ge 5$, the value $k_0:=\left\lceil \log_{2} n \right\rceil$ satisfies $k_0\ge 2$ and also
   \begin{align*}
      g_{n}(k_0)  = n - \frac{3}{2} \cdot 2^{\left\lceil \log_{2} n\right\rceil} \leq n - \frac{3}{2} \cdot 2^{\log_{2} n}  = n - \frac{3n}{2} \le 1.
   \end{align*}
   The criterion given in~\ref{SLessThanKIfandonlyif} implies that $S_{\u }\left(\mathcal{U}_{\infty}\right) \leq k_0=\left\lceil \log_{2} n \right\rceil$. We have therefore established
   \begin{align}\label{TwoBoundsForS}
      \left\lfloor \log_{2} n \right\rfloor \leq S_{\u }\left(\mathcal{U}_{\infty}\right) \leq \left\lceil \log_{2} n \right\rceil.
   \end{align}
   Combining~\ref{TwoBoundsForS} and ~\ref{SLessThanKIfandonlyif} and using the fact that $k_1:=\left\lfloor \log_{2} n \right\rfloor\ge 2$, gives us
   \begin{align}
      S_{\u }\left(\mathcal{U}_{\infty}\right) = k_1  \Leftrightarrow g_{n}\left(k_1\right) \leq 1  \Leftrightarrow n \leq 1 + 3\cdot 2^{\left\lfloor \log_{2} n \right\rfloor - 1}. \nonumber
   \end{align}
   This proves Formula~\ref{FormulaMinimumNumberOfSteps} for $n\ge 5.$  As observed at the beginning of this proof, for the case $n=4,\ S_{\u}(\ui)=2=\left\lfloor \log_{2} n \right\rfloor$, and clearly $4 \leq 1 + 3\cdot 2^{\left\lfloor \log_{2} 4 \right\rfloor - 1},$ this verifies Formula~\ref{FormulaMinimumNumberOfSteps} when $n=4.$
\end{proof}

Computing the values of $S_{\u}(\ui)$ for smaller numbers gives us the formula
\begin{align}\label{FormulaMinimumNumberOfStepsGeneral}
   \defi{S_{\u}(\ui)}{1 & : \, \mathrm{\ if\ \,} n=1,2,\\
                      2 & : \, \mathrm{\ if\ \,} n=3,\\[0mm]
                      \ds{\left\lfloor\log_2 n \right\rfloor } & : \, \mathrm{\ if\ \,} {n\ge 4, \ n \leq 1 + 3\cdot2^{\left\lfloor \log_2 n \right\rfloor - 1}},  \\[1mm]
                      \ds{\left\lceil\log_2 n\right\rceil } & : \, \mathrm{\ if\ \, } {n\ge 4, \ n > 1 + 3\cdot2^{\left\lfloor \log_2 n \right\rfloor - 1}}.}
\end{align}

The first $n\ge 4$ such that $S_{\u}(\ui)=\ds{\left\lceil\log_2 n\right\rceil }$ is $n=14.$  In the upcoming paper~\cite{GaBaMoBoPoCe}, we work with families of disjoint intervals and we identify a corresponding number $S_{\u}(\ui)$ which leads to a slightly different formula than the one in~\ref{FormulaMinimumNumberOfStepsGeneral}. In that paper, we also build minimal families that generate the power set and we also provide an algorithm to do this in the finite case.

\begin{corollary}\label{CorollaryAnyFamilyPartitionAlgebra}
   Let \,$\u=\{E_1,E_2,...,E_n\}$ be a family of subsets of $X$ and define $A_I$ by Equation~\ref{EquationAIndices}. If $\h=\left\{ A_I\,:\, I\sub I_n\right\}$ then \sh is a partition of $X$ and $\hs{n}=\ui$.
\end{corollary}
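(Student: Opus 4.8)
The plan is to deduce this from Theorem~\ref{TheoremAnyFamilyPartitionAlgebra} and the step-count Theorem~\ref{TheoremMinimumNumberOfSteps}. Theorem~\ref{TheoremAnyFamilyPartitionAlgebra} already supplies that $\h$ is a partition of $X$ and that the algebra it generates is reached by step $2^{n}$; moreover that algebra is $\ui$ itself, since $\u\sub\hs{2^{n}}\sub\hi$ gives $\ui\sub\hi$, while each $A_I$ is, by Equation~\ref{EquationAIUnionMinusIntersections}, a finite boolean combination of the $E_i$'s and hence lies in the algebra $\ui$, so $\h\sub\ui$ and thus $\hi\sub\ui$, i.e. $\hi=\ui$. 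Consequently, everything left to prove is the sharper statement $\hs{n}=\hi$, which is a pure statement about how many construction steps a partition needs in order to generate its algebra.

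First I would record that $\h$, after deleting the empty set if it occurs, is a partition of $X$ into $m$ nonempty parts for some $m\le 2^{n}$: distinct index sets $I,J\sub I_n$ yield disjoint $A_I,A_J$ (an element lying in both would force $I=J$), the $A_I$ cover $X$, and $I_n$ has only $2^{n}$ subsets. Removing the possibly empty piece $A_\emptyset=X\setminus\bigcup_i E_i$ from $\h$ changes neither $\hi$ nor any $\hs{k}$, so this reduction is harmless.

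Next I would invoke the step count. By Theorem~\ref{TheoremMinimumNumberOfSteps} together with Formula~\ref{FormulaMinimumNumberOfStepsGeneral}, a partition of $X$ into $m$ nonempty parts satisfies $S_{\h}(\hi)\le\lceil\log_2 m\rceil$ when $m\ge 2$, and $S_{\h}(\hi)=1$ when $m=1$. Since $m\le 2^{n}$ we get $\lceil\log_2 m\rceil\le\lceil\log_2 2^{n}\rceil=n$, and since $\u\neq\emptyset$ we have $n\ge 1$, which also disposes of the case $m=1$. Hence $S_{\h}(\hi)\le n$, i.e. $\hi\sub\hs{n}$; the reverse inclusion $\hs{n}\sub\hi$ holds for any family by Definition~\ref{defnlastfamily} and Lemma~\ref{LemmaBasicTripple}. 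Therefore $\hs{n}=\hi=\ui$, and the fact that $\h$ is a partition of $X$ was already part of Theorem~\ref{TheoremAnyFamilyPartitionAlgebra}; this completes the argument.

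There is essentially no obstacle once the two cited theorems are in hand; the only thing to watch is the small-$m$ bookkeeping ($m\le 3$ and the possibly empty piece $A_\emptyset$), which is why I would phrase the bound as $\max\{1,\lceil\log_2 m\rceil\}$. If one prefers an argument that does not quote the exact formula, one can instead reuse the inductive description $B_k=\{0,1,\dots,2^{k}\}\cup\{m-2^{k-1},\dots,m\}$ (valid for $k\ge 2$) from the proof of Theorem~\ref{TheoremMinimumNumberOfSteps}: taking $k=n$, the block $\{0,1,\dots,2^{n}\}$ already contains $\{0,1,\dots,m\}$ because $m\le 2^{n}$, so $\{0,1,\dots,m\}\sub B_n$, meaning $\hs{n}$ contains every union of parts of $\h$ together with $\emptyset$, which by Lemma~\ref{LemmaCharactizeElementsAlgebraPartition} is all of $\hi$; the degenerate case $n=1$ (where $m\le 2$) is checked by hand. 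That is the version I would actually write out.
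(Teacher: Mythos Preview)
Your proof is correct and follows essentially the same route as the paper: invoke Theorem~\ref{TheoremAnyFamilyPartitionAlgebra} for the partition structure and the containment $\u\sub\hs{2^n}$, apply Theorem~\ref{TheoremMinimumNumberOfSteps} to the partition $\h$ (with at most $2^n$ parts) to get $S_\h(\hi)\le n$, and use Equation~\ref{EquationAIUnionMinusIntersections} to see $\h\sub\ui$ and hence $\hi\sub\ui$. You are in fact a bit more careful than the paper about the small-$m$ cases and about possibly empty pieces $A_I$ (note that any $A_I$, not only $A_\emptyset$, may be empty), which the paper glosses over.
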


\begin{proof}
   By Theorem~\ref{TheoremAnyFamilyPartitionAlgebra}, we have that \, $\hs{2^n}$ is an algebra that contains \suu. \, If we apply Theorem~\ref{TheoremMinimumNumberOfSteps} to the partition $\h$ (which has at most $2^n$ elements) we conclude that $S_{\h}(\hi)\le \left\lceil\log_2 2^n\right\rceil = n,$ which means that \,$\hi\sub \hn,$\, hence \,$\hi=\hn=\hs{2^n},$\, therefore \,$\u\sub\hn.$ Since $\hn$ is an algebra, we conclude  \,$\ui\sub\hn.$ From Equation~\ref{EquationAIUnionMinusIntersections}, we conclude that $\,\h\sub\ui,$ therefore $\hn=\ui.$
\end{proof}

The use of partitions to characterize algebras and vice-versa leads to some very general results. In the next sections, we will connect these ideas with a class of relatively simple families in order to expand the theory.


\section{Minimal Constructibility} 
\label{SectionMinimalConstructibility}

As noted before, it is frequently of use to be able to move backwards from complicated algebras to relatively simple generating families. The following example motives the definition that follows it.

\begin{example}
   Consider  $X = \{1,2,3\}$ and the family
   \begin{displaymath}
      \mathcal{U} = \{\{1,2\},\{3\},\emptyset,X\}
   \end{displaymath}
   The elements $\emptyset$ and $X$ are constructible in one step from the other two elements in the family; thus, in some sense, they are unnecessary. The family $\h = \{\{1,2\},\{3\}\}$ is of smaller size, and $\mathcal{U}_{\infty} = \h_{\infty}$. Thus, this allows for the construction of a simpler family that still generates the same algebra.
\end{example}

\begin{definition}\label{defn1mc}
   We say \su is \emph{1-minimal constructible} if for any $\h \subsetneq \mathcal{U}$ we have \,$\u \nsubseteq \h_{1}$.
\end{definition}

In other words, a family is 1-minimal constructible if it cannot be built in one step from any of its proper subsets. Thus, these families contain simple structure and lack much of the duplication and unnecessary elements encountered in algebras. It is possible to place a stronger condition on the families, and to require that there are no elements that can be constructed in any given finite number of steps. This desire leads to the following definition.

\begin{definition}\label{defnnmc}
   Let $n \ge 1$. We say that \su is $n$-minimal constructible if for any $\h \subsetneq \mathcal{U}$ we have \,$\u \nsubseteq \h_{n}$.
\end{definition}

\begin{example}
   \label{ex1}
   Consider $X = \left\{ 1, 2, 3, 4 \right\}$, and \,$\u = \left\{ \left\{1 \right\}, \left\{ 2 \right\}, \left\{ 3, 4 \right\} \right\}$. Then
   \begin{displaymath}
      \u_{1} = \left\{ \left\{1 \right\}, \left\{ 2 \right\}, \left\{ 3, 4 \right\}, \left\{1, 2 \right\}, \left\{ 1, 3, 4 \right\}, \left\{ 2, 3, 4 \right\}, \emptyset \right\}.
   \end{displaymath}
   \indent We see that \,$\u$ is 1-minimal constructible because none of $\left\{1 \right\}$, $\left\{ 2 \right\}$ or $\left\{ 3, 4 \right\}$ can be constructed from the remaining two in one step.  Also note that \,$\u$ is not 2-minimal constructible because there is \,$\vv,$\, a proper subfamily of \,$\u,$ such that \,$\u\sub\vv_2$, namely, $\vv=\left\{ \left\{1 \right\}, \left\{ 2 \right\} \right\},$ as
   \begin{displaymath}
      \vv_{2} =  \left\{\emptyset,  \left\{1 \right\}, \left\{ 2 \right\}, \left\{ 1, 2 \right\}, \left\{ 3, 4 \right\}, \left\{ 1, 3, 4 \right\} , \left\{ 2, 3, 4 \right\}, \left\{ 1, 2, 3, 4 \right\} \right\}.
   \end{displaymath}
   \indent On the other hand, $\u_{1}$ is not 1-minimal constructible because $\left\{1, 2 \right\}$ can be constructed in one step through the union of $\left\{1 \right\}$ and $\left\{ 2 \right\}$.
\end{example}

\begin{example}
   Some examples of 1-minimal constructible families with universe $\mathcal{X} = \left\{ 1, 2, 3 \right\}$ are
   \begin{align}
      \u^{1} &=  \left\{   \right\}\nonumber \\
      \u^{2} &=  \left\{ \left\{ 2  \right\} \right\}\nonumber \\
      \u^{3} &=  \left\{  \left\{ 1 \right\}, \left\{ 1, 3 \right\}  \right\}\nonumber \\
      \u^{4} &= \left\{ \left\{ 3 \right\}, \left\{ 2, 3 \right\}, \left\{ 1, 2, 3 \right\} \right\}\nonumber \\
      \u^{5} &=  \left\{ \left\{ 1 \right\}, \left\{ 2 \right\}, \left\{ 3 \right\},  \left\{ 1, 2, 3 \right\} \right\}\nonumber
   \end{align}
   Note that the families $\u^{1}, \u^{2}, \u^{3}$ are $n$-minimal constructible for every natural number $n$. The families $\u^{4}$ and \,$\u^{5}$ are not 2-minimal constructible.
\end{example}

An alternative characterization of this idea can be formulated. If no set in the family can be constructed in one step from the remaining sets, then no proper subset can construct the entire family. This is formalized in the next lemma.

\begin{lemma}\label{lmmremove1element}
   \su is $n$-minimal constructible if and only if  for any $A\in\u$ we have \,$\u\not\sub C_n(\u-\{A\}).$
\end{lemma}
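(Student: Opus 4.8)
The plan is to prove both directions of the biconditional, exploiting the fact that the condition in Definition~\ref{defnnmc} quantifies over all proper subfamilies $\h \subsetneq \u$, whereas the condition in the lemma quantifies only over the particular proper subfamilies of the form $\u - \{A\}$ with $A \in \u$. The easy direction is from Definition~\ref{defnnmc} to the lemma: if $\u$ is $n$-minimal constructible, then since each $\u - \{A\}$ is a proper subfamily of $\u$ (as $A \in \u$), the defining condition immediately gives $\u \nsubseteq C_n(\u - \{A\})$. So I would dispose of that implication in one sentence.

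For the converse, I would argue by contrapositive. Suppose $\u$ is \emph{not} $n$-minimal constructible; then there exists a proper subfamily $\h \subsetneq \u$ with $\u \subseteq \hn$. Since $\h$ is proper, pick some $A \in \u \setminus \h$, so that $\h \subseteq \u - \{A\}$. By the second bullet of Lemma~\ref{LemmaBasicTripple} (monotonicity of the construction in the initial family), $\hn \subseteq C_n(\u - \{A\})$. Combining, $\u \subseteq \hn \subseteq C_n(\u - \{A\})$, which contradicts the right-hand condition of the lemma for this particular $A$. Hence if the right-hand condition holds for every $A \in \u$, then $\u$ must be $n$-minimal constructible.

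The main (really the only) subtlety worth checking is the edge case where $\u$ has at most one element — for instance $\u = \{\}$ or $\u$ a singleton — since then the families $\u - \{A\}$ and the proper subfamilies $\h \subsetneq \u$ behave somewhat degenerately. When $\u = \emptyset$ there is no $A$ and no proper $\h$, so both sides of the biconditional hold vacuously; when $\u = \{A_0\}$ the only proper subfamily is $\emptyset = \u - \{A_0\}$, so the two quantifications literally coincide. In all other cases the monotonicity step is the workhorse and there is no real obstacle; the proof is essentially a bookkeeping exercise in matching up the two quantifier ranges via the observation that every proper subfamily is contained in some $\u - \{A\}$, together with Lemma~\ref{LemmaBasicTripple}.
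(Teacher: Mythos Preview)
Your proof is correct and follows essentially the same route as the paper's: both arguments identify the ``hard'' direction as the contrapositive, pick $A\in\u\setminus\h$ for a witnessing proper subfamily $\h$, and use the monotonicity from Lemma~\ref{LemmaBasicTripple} to obtain $\u\sub\hn\sub C_n(\u-\{A\})$. Your explicit treatment of the degenerate cases $\u=\emptyset$ and $|\u|=1$ is a nice addition that the paper omits.
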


\begin{proof}
   We will prove the opposite of the statement, that is
    \[ \u \,\,\mathrm{is\ not\ } n\hy\mathrm{minimal\ constructible\ } \Leftrightarrow \ \, \exists \,A\in\u \, : \,\,\u\sub C_n(\u-\{A\}).
    \]
    It is clear that if there exists $A\in\u$ such that \,$\u\sub C_n(\u-\{A\}),$ then \su would not be $n$-minimal constructible. Conversely, if \su is not $n$-minimal constructible, there exists \sh a proper subfamily of \su such that $\u\sub\h_n.$ Take simply $A\in \u-\h,$ then clearly $\u\sub\h_n\sub C_n(\u-\{A\}).$
\end{proof}

If a family cannot be constructed from any of its subfamilies in $n$ steps, it is clear that it cannot be constructed from any of its subfamilies in less than $n$ steps. Similarly, if a family can be constructed in a given number of steps, it is also constructible in any higher number of steps. This leads to the following lemma whose simple proof we are omitting.

\begin{lemma}\label{lmmmoveback}
   If \su is $n$-minimal constructible and $k \leq n$ then \su is $k$-minimal constructible. If \su is not $n$-minimal constructible and $k \geq n$, then \su is not $k$-minimal constructible.
\end{lemma}

The following lemma, even though simple, will be used in future counting results.

\begin{lemma}\label{lmmdoesnothavex}
   Assume $n\ge 2$ and \,$|\u| > 1.$ \, If \su contains either $\emptyset$ or $X$, then \su is not $n$-minimal constructible.
\end{lemma}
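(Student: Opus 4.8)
The plan is to use Lemma~\ref{lmmremove1element}, which says that \su is $n$-minimal constructible if and only if no single element of \su can be constructed in $n$ steps from the remaining elements. Equivalently, to show \su is \emph{not} $n$-minimal constructible, it suffices to exhibit a single $A \in \u$ with $\u \sub C_n(\u - \{A\})$. Since $n \ge 2$ and $|\u| > 1$, and \su contains $\emptyset$ or $X$, the natural candidate for the removable element $A$ is precisely $\emptyset$ (respectively $X$), and the remaining family $\h := \u - \{\emptyset\}$ (respectively $\u - \{X\}$) is still nonempty.

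First I would treat the case $\emptyset \in \u$. Set $\h = \u - \{\emptyset\}$, which is nonempty because $|\u| > 1$. Every element of $\h$ is trivially in $\h_n \supseteq \h_1$ since $\h \subseteq \h_n$ by iterating the first bullet of Lemma~\ref{LemmaBasicTripple}. So the only thing to check is that $\emptyset \in \h_n$. Pick any $B \in \h$ (possible since $\h \ne \emptyset$). Then $\emptyset = B \cap \sc{B}$; here $\sc{B} \in \h_1$ and $B \in \h_1$, so $B \cap \sc{B} \in \h_2 \subseteq \h_n$ because $n \ge 2$. Hence $\u = \h \cup \{\emptyset\} \sub \h_n$, and $\h = \u - \{\emptyset\}$ is a proper subfamily, so \su is not $n$-minimal constructible. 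The case $X \in \u$ is symmetric: set $\h = \u - \{X\}$, pick any $B \in \h$, and observe $X = B \cup \sc{B} \in \h_2 \subseteq \h_n$, again using $n \ge 2$; the same conclusion follows from Lemma~\ref{lmmremove1element} (or directly from Definition~\ref{defnnmc}).

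There is essentially no obstacle here; the only mild subtleties are bookkeeping ones: ensuring $\h$ is nonempty (which is exactly why the hypothesis $|\u| > 1$ is needed — with $\u = \{\emptyset\}$ the empty subfamily cannot generate $\emptyset$), and ensuring the construction of $\emptyset$ or $X$ takes at most $n$ steps (which is why $n \ge 2$ is needed, since forming $\sc{B}$ and then $B \cap \sc{B}$ or $B \cup \sc{B}$ costs two steps and cannot in general be done in one). I would state the argument for the $\emptyset$ case in full and remark that the $X$ case is dual, replacing intersection with union throughout.
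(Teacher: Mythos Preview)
Your proof is correct and follows essentially the same approach as the paper's: remove $\emptyset$ (respectively $X$), pick any remaining set $B$, and build $\emptyset = B \cap \sc{B}$ (respectively $X = B \cup \sc{B}$) in two steps, then invoke Lemma~\ref{lmmremove1element}. The only cosmetic difference is that the paper first concludes \su is not $2$-minimal constructible and then applies Lemma~\ref{lmmmoveback} to reach $n\ge 2$, whereas you use the inclusion $\h_2 \subseteq \h_n$ directly --- both routes are equivalent.
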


\begin{proof}
   Suppose that $\emptyset \in \u$; since  \,$|\u| > 1$ there must exist some $A \in \u$ with $A \neq \emptyset$. Since $\sc{A} \in \uo$  and $A \cap \sc{A} = \emptyset \in \uoo,$ we conclude \,$\u \sub C_2(\u - \{\emptyset\})$. Lemma~\ref{lmmremove1element} implies that \su is not 2-minimal constructible and  Lemma~\ref{lmmmoveback} tells us that if $n\ge 2$ then \su is not $n$-minimal constructible. A similar argument gives the result in the case that $X \in \u$.
\end{proof}

Given an $n$-minimal constructible family, there are various elementary properties of the family that can be derived. A few of these are summarized in the next lemmas.

\begin{theorem}\label{TheoremMinimalConstructiblePreserved}
   If \su is $n$-minimal constructible and \,$\h\sub\u$ then \,$\h$ is $n$-minimal constructible.
\end{theorem}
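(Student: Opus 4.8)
The statement says: if $\u$ is $n$-minimal constructible and $\h \sub \u$, then $\h$ is $n$-minimal constructible. The natural route is the contrapositive via Lemma~\ref{lmmremove1element}: $\h$ is \emph{not} $n$-minimal constructible iff there is some $A \in \h$ with $\h \sub C_n(\h - \{A\})$. So assume such an $A$ exists and try to produce a set $A' \in \u$ witnessing that $\u \sub C_n(\u - \{A'\})$, which by Lemma~\ref{lmmremove1element} contradicts the $n$-minimal constructibility of $\u$.

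First I would take $A' = A$ (note $A \in \h \sub \u$). The goal is then to show $\u \sub C_n(\u - \{A\})$. Write $\u = \h \cup (\u \setminus \h)$. Every element of $\u \setminus \h$ lies in $\u - \{A\}$ (since $A \in \h$), hence trivially in $C_n(\u - \{A\})$ because $\csu{0} \sub \csu{n}$ by Lemma~\ref{LemmaBasicTripple} (iterating the first bullet). For the elements of $\h$: by hypothesis $\h \sub C_n(\h - \{A\})$, and since $\h - \{A\} \sub \u - \{A\}$, the monotonicity in the second bullet of Lemma~\ref{LemmaBasicTripple} gives $C_n(\h - \{A\}) \sub C_n(\u - \{A\})$. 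Combining, $\h \sub C_n(\u - \{A\})$. Therefore $\u = \h \cup (\u \setminus \h) \sub C_n(\u - \{A\})$, and Lemma~\ref{lmmremove1element} says $\u$ is not $n$-minimal constructible, contradicting the hypothesis.

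There is essentially no main obstacle here — the argument is a two-line application of the monotonicity properties already recorded in Lemma~\ref{LemmaBasicTripple} together with the reformulation in Lemma~\ref{lmmremove1element}. The only point requiring a moment's care is the edge case $A \notin \h$, which cannot occur since we chose $A \in \h$; and the degenerate case $\h = \u$, where the statement is immediate. One should also note that if $\h$ is empty or a singleton the conclusion is vacuous or trivial, so no separate treatment is needed. I would write the proof in just a few sentences, invoking the two cited lemmas, rather than expanding any calculation.
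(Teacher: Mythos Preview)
Your proof is correct and follows essentially the same contrapositive strategy as the paper: both augment a witnessing proper subfamily of $\h$ by $\u\setminus\h$ to obtain a proper subfamily of $\u$ that constructs $\u$ in $n$ steps, invoking the monotonicity in Lemma~\ref{LemmaBasicTripple}. The only cosmetic difference is that you first pass through Lemma~\ref{lmmremove1element} to reduce to the single-element-removal case $\h-\{A\}$, whereas the paper works directly from Definition~\ref{defnnmc} with an arbitrary proper subfamily $\mathcal{K}\subsetneq\h$ and then sets $\vv=\mathcal{K}\cup(\u-\h)$; taking $\mathcal{K}=\h-\{A\}$ in the paper's argument recovers exactly your $\vv=\u-\{A\}$.
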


\begin{proof}
   Assume the opposite:  then there exists $\mathcal{K} \subsetneq \h$ with $\h \sub \mathcal{K}_{n}$. Then $\u \sub \h \cup \left(\u-\h\right) \sub \mathcal{K}_{n} \cup \left(\u - \h\right)$.
   Lemma~\ref{LemmaBasicTripple} implies that
   \begin{align*}
      \u  \sub \mathcal{K}_{n} \cup \left(\u - \h\right)\nonumber  \sub \mathcal{K}_{n} \cup \left(\u - \h\right)_{n}\nonumber  \sub \left(\mathcal{K} \cup \left(\u - \h\right)\right)_{n}.
   \end{align*}
   If we call \,$\vv= \mathcal{K} \cup \left(\u - \h\right)$ then \,$\vv\sub \h\cup \left(\u - \h\right)\sub\u$. Since $\mathcal{K} \subsetneq \h$, there exists $A\in\h-\k.$  Therefore $A\in\u$; however, $A\notin \k$ and $A\notin \u-\h$ (otherwise $A\notin\h$, which is impossible). Hence $A\notin \vv$, and therefore $\vv\subsetneq \u,$  contradicting the fact that \su is $n$-minimal constructible.
\end{proof}

\begin{corollary}\label{corintersection}
   If \su and $\h$ are $n$-minimal constructible then $\u \cap \h$ is also $n$-minimal constructible.
\end{corollary}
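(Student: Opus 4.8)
The statement to prove is Corollary~\ref{corintersection}: if $\u$ and $\h$ are $n$-minimal constructible then $\u\cap\h$ is $n$-minimal constructible.

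The plan is to derive this immediately from Theorem~\ref{TheoremMinimalConstructiblePreserved}, which says that $n$-minimal constructibility is inherited by subfamilies. First I would observe that $\u\cap\h\sub\u$, since every set belonging to both $\u$ and $\h$ in particular belongs to $\u$. Then, since $\u$ is $n$-minimal constructible by hypothesis, Theorem~\ref{TheoremMinimalConstructiblePreserved} applied with the subfamily $\u\cap\h$ in place of $\h$ yields directly that $\u\cap\h$ is $n$-minimal constructible. The hypothesis that $\h$ is $n$-minimal constructible is not even needed; one could equally well note $\u\cap\h\sub\h$ and use that $\h$ is $n$-minimal constructible.

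There is essentially no obstacle here: the whole content is the monotonicity result already established in Theorem~\ref{TheoremMinimalConstructiblePreserved}, and the only thing to check is the trivial set-theoretic containment $\u\cap\h\sub\u$ (or $\sub\h$). I would write the proof in one or two sentences. If the paper wanted a self-contained argument not citing Theorem~\ref{TheoremMinimalConstructiblePreserved}, one could instead argue directly: given $\k\subsetneq\u\cap\h$ with $\u\cap\h\sub\k_n$, pad $\k$ back up to $\k\cup(\u-(\u\cap\h))$ and use Lemma~\ref{LemmaBasicTripple} to get $\u\sub(\k\cup(\u-(\u\cap\h)))_n$ with the padded family a proper subfamily of $\u$, contradicting $n$-minimal constructibility of $\u$ — but this is exactly the proof of Theorem~\ref{TheoremMinimalConstructiblePreserved} specialized, so citing it is cleaner.

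In summary, the proof I would give is: $\u\cap\h\sub\u$ and $\u$ is $n$-minimal constructible, so by Theorem~\ref{TheoremMinimalConstructiblePreserved}, $\u\cap\h$ is $n$-minimal constructible.
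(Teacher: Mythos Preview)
Your proposal is correct and matches the paper's intended approach: the corollary is placed immediately after Theorem~\ref{TheoremMinimalConstructiblePreserved} with no separate proof, precisely because it follows at once from that theorem via the containment $\u\cap\h\sub\u$. Your observation that the hypothesis on $\h$ is superfluous is also accurate.
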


\begin{observation}
   Note that \su is 1-minimal constructible  if and only if for any \,$\h$ a subfamily of \su such that \,$\u_0\sub\ho$\, we have that \,$\h=\u,\, $ (remember that \, $\u_0=\u$).  Hence we could have taken another path to define $n$-minimal constructibility, ``\su is $n$-minimal constructible if and only if for any \,$\h$ subfamily of \su such that \,$\u_{n-1}\sub\hn$\, we have that \,$\h=\u";$ however, this is not equivalent to our definition.
\end{observation}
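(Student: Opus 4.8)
The statement is two independent claims: the ``if and only if'' characterisation of $1$-minimal constructibility, and the assertion that the displayed alternative (``$\u$ is $n$-minimal constructible iff every subfamily $\h$ with $\u_{n-1}\subseteq\h_n$ equals $\u$'') is not equivalent to Definition~\ref{defnnmc}. The first claim is a purely logical rewriting of Definition~\ref{defn1mc}, and my plan is simply to spell it out. Since $\u_0=\u$, the hypothesis ``$\u_0\subseteq\h_1$'' is ``$\u\subseteq\h_1$''. For any subfamily $\h\subseteq\u$, the implication ``$\u\subseteq\h_1\Rightarrow\h=\u$'' holds automatically when $\h=\u$ (its conclusion is then $\u=\u$), and when $\h\subsetneq\u$ it is equivalent to $\u\nsubseteq\h_1$. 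Hence the condition ``for every $\h\subseteq\u$ with $\u_0\subseteq\h_1$ we have $\h=\u$'' is precisely ``for every $\h\subsetneq\u$, $\u\nsubseteq\h_1$'', which is Definition~\ref{defn1mc} (and the hypothesis is satisfiable, since $\h=\u$ satisfies $\u\subseteq\u_1$ by Lemma~\ref{LemmaBasicTripple}).

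For the second claim the key preliminary remark is that the alternative condition is \emph{implied by} Definition~\ref{defnnmc}, so the two can disagree in at most one direction. Indeed $\u=\u_0\subseteq\u_{n-1}$ by Lemma~\ref{LemmaBasicTripple}, so ``$\u_{n-1}\subseteq\h_n$'' implies ``$\u\subseteq\h_n$''; thus if $\u$ is $n$-minimal constructible in the sense of Definition~\ref{defnnmc} (no proper $\h\subsetneq\u$ has $\u\subseteq\h_n$), then no proper $\h$ has $\u_{n-1}\subseteq\h_n$ either, i.e.\ $\u$ satisfies the alternative. For $n=1$ the two conditions literally coincide, consistent with the first claim. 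So, to prove non-equivalence, it suffices to produce for some $n\ge 2$ a family satisfying the alternative condition but not $n$-minimal constructible.

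I would do this with $n=2$, universe $X=\{1,2,3,4\}$, and $\u=\{\{1\},\{2\},\{3\},\{1,2,3\}\}$. This $\u$ is not $2$-minimal constructible: taking $\h=\u\setminus\{\{1,2,3\}\}=\{\{1\},\{2\},\{3\}\}$ we have $\{1,2,3\}=(\{1\}\cup\{2\})\cup\{3\}\in\h_2$ while $\{1\},\{2\},\{3\}\in\h\subseteq\h_2$, so $\u\subseteq\h_2$ and Lemma~\ref{lmmremove1element} gives the conclusion. On the other hand $\u$ \emph{does} satisfy the alternative condition, i.e.\ $\u_1\nsubseteq\h_2$ for every proper $\h\subsetneq\u$. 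If $\{1,2,3\}\notin\h$, then $\{4\}=X\setminus\{1,2,3\}$ lies in $\u_1$ but, since $\h\subseteq\{\{1\},\{2\},\{3\}\}$ forces $\h_2\subseteq C_2(\{\{1\},\{2\},\{3\}\})$ by Lemma~\ref{LemmaBasicTripple}, not in $\h_2$, provided $\{4\}\notin C_2(\{\{1\},\{2\},\{3\}\})$. If instead $\{1,2,3\}\in\h$, then $\h$ omits some $\{i\}$ with $i\in\{1,2,3\}$, and $\{i\}\in\u\subseteq\u_1$ while $\h_2\subseteq C_2(\u\setminus\{\{i\}\})$; so it remains to know $\{i\}\notin C_2(\u\setminus\{\{i\}\})$, and by the $S_3$-symmetry of $\u$ in the labels $1,2,3$ this only has to be checked for $i=1$. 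Granting these two negative facts, $\u$ satisfies the alternative notion but not Definition~\ref{defnnmc}, so the two are not equivalent.

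The only genuine computation, which I expect to be the main obstacle, is verifying the two negative statements $\{4\}\notin C_2(\{\{1\},\{2\},\{3\}\})$ and $\{1\}\notin C_2(\{\{2\},\{3\},\{1,2,3\}\})$. Each is a finite inspection: write out $C_1$ of the given three-set family, then check that the target set is not the complement of any element of $C_1$ (the required complement does not appear there), not a union of two elements of $C_1$ (no nonempty subset of the target is available), and not an intersection of two elements of $C_1$ (the elements of $C_1$ containing the target point meet in a strictly larger set). A useful way to organise this is the observation that a point contained in no member of the starting family can only be introduced into a constructed set through a complement, so one merely tracks which sets become available as complements at each stage; with that, both checks are short and routine.
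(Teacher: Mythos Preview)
Your argument is correct. The first part (the iff for $n=1$) is the same logical unwinding the paper has in mind, and your proof that $n$-minimal constructibility implies the alternative condition is exactly the paper's argument in Lemma~\ref{LemmaMinimalConstructibleOption}. The only real difference is the counterexample.

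The paper establishes non-equivalence in Lemma~\ref{LemmaMinimalConstructibleOption} using $X=\{1,2,3,4,5\}$ and $\u=\{\{1\},\{2\},\{3\},\{4\},\{2,3,4\}\}$; it then checks five cases, one for each maximal proper subfamily, exhibiting in each an element of $\u_1$ missing from $\h_2$. Your example $X=\{1,2,3,4\}$, $\u=\{\{1\},\{2\},\{3\},\{1,2,3\}\}$ is genuinely smaller and, thanks to the $S_3$-symmetry on the labels $1,2,3$, collapses the case analysis to just two computations: $\{4\}\notin C_2(\{\{1\},\{2\},\{3\}\})$ and $\{1\}\notin C_2(\{\{2\},\{3\},\{1,2,3\}\})$. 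Both of these check out (in each case $C_1$ contains exactly three sets containing the target point, and their pairwise intersections all have size $2$; the needed complement $\{1,2,3\}$, respectively $\{2,3,4\}$, is absent from $C_1$; and no nonempty subset of the target lies in $C_1$). So your route is the same strategy with a tidier witness; the paper's example buys nothing extra, and yours is a slight economy.
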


\begin{definition}\label{defnFatnmc}
   We say \su is n-minimal-fat constructible if for any \,$\h$ subfamily of \su such that \,$\u_{n-1}\sub\hn$\, we have that \,$\h=\u.$
\end{definition}

The following lemma will not be used in this paper; however, it is proven here to explore another generalization of 1-minimal constructible.

\begin{lemma}\label{LemmaMinimalConstructibleOption}
   If \su is $n$-minimal constructible then \su is $n$-minimal-fat constructible. The opposite implication is true when $n=1$ but not always.
\end{lemma}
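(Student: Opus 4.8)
The plan is to prove each of the three assertions in turn. For the first, suppose \su is $n$-minimal constructible and suppose, for contradiction, that it is \emph{not} $n$-minimal-fat constructible; then there is a proper subfamily $\h\subsetneq\u$ with $\u_{n-1}\sub\hn$. The key observation is that since $\h\sub\u$, we have $\u_{n-1}\supseteq\u$ by Lemma~\ref{LemmaBasicTripple} (iterated), so in particular $\u\sub\u_{n-1}\sub\hn$. But $\h$ is a proper subfamily of \su, and this directly contradicts the definition of $n$-minimal constructibility (Definition~\ref{defnnmc}). Hence \su is $n$-minimal-fat constructible. This step is short; the only thing to be careful about is the chain $\u=\u_0\sub\u_1\sub\cdots\sub\u_{n-1}$, which is exactly the first bullet of Lemma~\ref{LemmaBasicTripple}.

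For the case $n=1$: here $\u_{n-1}=\u_0=\u$, so the $1$-minimal-fat condition reads ``for any $\h\sub\u$ with $\u\sub\ho$ we have $\h=\u$'', which is verbatim the $1$-minimal constructible condition of Definition~\ref{defn1mc} (phrased via the observation immediately preceding Definition~\ref{defnFatnmc}). So the two notions literally coincide when $n=1$, giving both implications.

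The main obstacle, and the real content, is producing a counterexample showing that for some $n\ge 2$ a family can be $n$-minimal-fat constructible without being $n$-minimal constructible. The difference between the two conditions is that $n$-minimal constructibility forbids $\u\sub\hn$ for \emph{all} proper $\h$, whereas the fat version only forbids it when additionally the larger set $\u_{n-1}$ (not just \su itself) is contained in $\hn$; since $\u_{n-1}$ can be strictly larger than \su, the fat condition is genuinely weaker to violate, hence genuinely stronger as a hypothesis-free-standing property — so I want a family where some proper $\h$ reconstructs \su in $n$ steps but fails to reconstruct the bigger family $\u_{n-1}$. I would look for a small concrete example with $n=2$: take $X$ small (say $|X|=4$ or $5$) and \su a short list of sets such that one element $A\in\u$ satisfies $A\in C_2(\u-\{A\})$ (so \su is not $2$-minimal constructible, by Lemma~\ref{lmmremove1element}), yet no proper $\h\subsetneq\u$ has $\u_1\sub\hn$ — the point being that $\u_1$ contains enough ``new'' sets (unions/intersections/complements of all pairs from \su) that any $\h$ capable of producing them in two steps is forced to already equal \su. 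Concretely one can try to exploit Lemma~\ref{lmmdoesnothavex}-type phenomena: for instance a family containing three singletons together with one redundant two-element set, where the redundant set is $2$-constructible from the singletons but the singletons themselves are not jointly $2$-constructible from any proper subfamily because dropping any singleton loses a point of $X$ that still appears (un-grouped) in $\u_1$. Verifying such an example is a finite check of $C_1$ and $C_2$ for the candidate and for each of its subfamilies; the delicate part is choosing the example so that the ``fat'' side really does hold, which amounts to checking that $\u_1$ is rich enough to pin down \su. I expect a worked instance along these lines — plausibly the $\u^4$ or $\u^5$ style families or a close variant on a four-point universe — to furnish the needed separation, and the proof would conclude by exhibiting it and listing the relevant $C_1$, $C_2$ computations.
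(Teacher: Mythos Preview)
Your argument for the forward implication is correct and essentially identical to the paper's: from $\u\sub\u_{n-1}\sub\hn$ with $\h\subsetneq\u$ you contradict $n$-minimal constructibility. Your treatment of the $n=1$ case is also fine (and in fact more explicit than the paper, which leaves that case to the preceding observation).

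The genuine gap is the counterexample. You outline a plausible strategy but never exhibit a family, and the candidates you float do not work. Both $\u^4=\{\{3\},\{2,3\},\{1,2,3\}\}$ and $\u^5=\{\{1\},\{2\},\{3\},\{1,2,3\}\}$ on a three-point universe fail to be $2$-minimal-fat constructible: in each case one checks that the proper subfamily obtained by deleting the ``redundant'' set already satisfies $\u_1\sub\h_2$ (for $\u^5$ one even has $\u_1=2^X=\h_2$). Your ``three singletons plus a redundant two-element set'' on a four-point universe fails for the same reason: with $X=\{1,2,3,4\}$ and $\u=\{\{1\},\{2\},\{3\},\{1,2\}\}$, taking $\h=\{\{1\},\{2\},\{3\}\}$ one finds that every element of $\u_1$ (including $\{3,4\}=\{1,3,4\}\cap\{2,3,4\}$) lies in $\h_2$. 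The difficulty is that on small universes $\h_2$ grows too fast relative to $\u_1$.

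The paper's example needs five points: $X=\{1,2,3,4,5\}$ and $\u=\{\{1\},\{2\},\{3\},\{4\},\{2,3,4\}\}$. The set $\{2,3,4\}$ is redundant (it lies in $C_2$ of the four singletons, so \su is not $2$-minimal constructible), but its complement $\{1,5\}$ lies in $\u_1$ and is \emph{not} in $\h_2$ for $\h=\{\{1\},\{2\},\{3\},\{4\}\}$, since building $\{1,5\}$ from the singletons requires a three-fold union (or intersection) that cannot be reached in two steps. One must then also verify the remaining four maximal proper subfamilies, each of which drops a singleton $\{i\}$ and thereby cannot produce $\{i\}$ in two steps. The fifth point $5$, lying outside every singleton but inside $\{2,3,4\}^c$, is exactly what makes $\u_1$ outrun $\h_2$; your four-point attempts lack this extra witness.
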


\begin{proof}
   Let \,$\h\sub\u$ such that $\u_{n-1}\sub\hn.$ Assume that $\h\neq\u.$ Since \su is $n$-minimal constructible and \,$\h\subsetneq\u$, we have that $\u\not\sub\hn$, but $\u\sub\u_{n-1}\sub\hn$ which is a contradiction.

   For the counterexample, consider $X=\{1,2,3,4,5\}$ and \,$\u=\{\{1\},\{2\},\{3\},\{4\},\{2,3,4\}\}.$
   We claim that \su is 2-minimal-fat constructible, to check this, we need to prove that if \sh is a proper subfamily of \su then $\uo\not\sub\h_2.$ It will be enough to check five different cases. \, Note that $\{1,5\}\in\uo.$
   \begin{itemize}
     \item $\h=\{\{1\},\{2\},\{3\},\{4\}\} \subsetneq\u;$ however, \,$\uo\not\sub\h_2$\, since $\{1,5\}\notin\h_2.$
     \item $\h=\{\{1\},\{2\},\{3\},\{2,3,4\}\} \subsetneq\u;$ however, \,$\uo\not\sub\h_2$\, since $\{4\}\notin\h_2.$
     \item $\h=\{\{1\},\{2\},\{4\},\{2,3,4\}\} \subsetneq\u;$ however, \,$\uo\not\sub\h_2$\, since $\{3\}\notin\h_2.$
     \item $\h=\{\{1\},\{3\},\{4\},\{2,3,4\}\} \subsetneq\u;$ however, \,$\uo\not\sub\h_2$\, since $\{2\}\notin\h_2.$
     \item $\h=\{\{2\},\{3\},\{4\},\{2,3,4\}\} \subsetneq\u;$ however, \,$\uo\not\sub\h_2$\, since $\{1\}\notin\h_2.$
   \end{itemize}
   If \,$\h\subsetneq\u$, then \sh is necessary a subfamily of one of the previous five.\\
   We claim that \su is not 2-minimal constructible. Consider the proper subfamily of \su, $\h=\{\{1\},\{2\},\{3\},\{4\}\}.$ \, Note that $\{2,3\},\{4\}\in\ho$\, and hence $\{2,3,4\}=\{2,3\}\cup\{4\}\in\h_2,$\, therefore $\u\sub\h_2.$
\end{proof}

Combining our results, we have proven two general classification theorems that allow the construction of relatively simple generating families for any given finite algebra of sets. These theorems have deep connections to the results on partitions and separable elements. The first theorem gives a method of determining a family with a number of desirable properties relating to elements in $\u_{\infty}$ and the number of steps required to construct the last family; on the other hand, the second theorem gives a construction of a simplest generating family for a given algebra.

\begin{theorem}\label{thmon1mc}
   Let \sa be a finite algebra of subsets of $X$ with \,$|\a|\ge 2^{4}$. Then there exists a family $\h$ of subsets of $X$ such that:
   \begin{itemize}
      \item $\h \sub \a$
      \item The elements of $\h$ form a partition of $X$
      \item $\h$ is 1-minimal constructible
      \item $\h_{\infty} = \a$
      \item $\left|\h\right| = \log_{2} \left|\a\right|$
      \item If $n = \left|\h\right|$, the number of steps required to construct \sa from $\h$ is given by:
             \begin{align}\label{FormulaAHMinimumNumberOfSteps}
                \defi{S_{\h}(\a)}{\ds{\left\lfloor\log_2 n \right\rfloor } & : \, \mathrm{\ if\ \,} {n \leq 1 + 3\cdot2^{\left\lfloor \log_2 n \right\rfloor - 1}},  \\[3mm]
                                   \ds{\left\lceil\log_2 n\right\rceil } & : \, \mathrm{\ if\ \, } {n > 1 + 3\cdot2^{\left\lfloor \log_2 n \right\rfloor - 1}}.}
             \end{align}
   \end{itemize}
\end{theorem}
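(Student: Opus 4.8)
The plan is to take $\h$ to be the partition $\p$ of $X$ induced by the unseparability equivalence relation on $\a$ from Lemma~\ref{LemmaEquivalenceRelationship}, and then to obtain each of the six bullet points by assembling the partition/algebra correspondence already developed. Since $\a$ is finite, $\p$ is finite, so Lemma~\ref{LemmaAlgebraPartitionAlgebra} applies directly and yields both that the elements of $\h=\p$ form a partition of $X$ and that $\h_\infty=\p_\infty=\a$; this settles the ``partition'' clause and the ``$\h_\infty=\a$'' clause at once. For the inclusion $\h\sub\a$ I would invoke Lemma~\ref{LemmaAlgebraClassEquivalence}: each equivalence class has the form $[a]=\bigcap_{A\in\a,\,a\in A}A$, a \emph{finite} intersection of members of the algebra $\a$ (here is where finiteness of $\a$ enters), hence $[a]\in\a$; thus every element of $\h=\p$ lies in $\a$.

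Next I would pin down the cardinality. Applying Lemma~\ref{LemmaCharactizeElementsAlgebraPartition} to the finite partition $\h$ gives $|\h_\infty|=2^{|\h|}$, and since $\h_\infty=\a$ this forces $|\a|=2^{|\h|}$, that is, $|\h|=\log_2|\a|$ (and incidentally exhibits $|\a|$ as a power of $2$). In particular the hypothesis $|\a|\ge 2^{4}$ says exactly that $n:=|\h|\ge 4$, a fact needed in the last two clauses.

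The only genuinely new point is that $\h$ is $1$-minimal constructible, and this is the step I expect to require a (short) argument rather than a citation. By Lemma~\ref{lmmremove1element} it suffices to show that for each $A\in\h$ we have $\h\not\sub \coso{\h-\{A\}}$; I will in fact show $A\notin \coso{\h-\{A\}}$. Every element of $\coso{\h-\{A\}}$ is of the form $E_1\cup E_2$, $E_1\cap E_2$, or $\sc{E_1}$ with $E_1,E_2\in\h-\{A\}$. Since the blocks of a partition are nonempty and pairwise disjoint, $E_1,E_2\sub X-A$, so $E_1\cup E_2$ and $E_1\cap E_2$ are contained in $X-A$ and hence differ from the nonempty set $A$; and $\sc{E_1}=\bigcup_{C\in\h,\,C\neq E_1}C$ can equal the single block $A$ only when $\h$ has exactly two elements, which is excluded because $|\h|=n\ge 4$. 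Hence no block of $\h$ is $1$-constructible from the remaining blocks, so $\h$ is $1$-minimal constructible.

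Finally, since $\h_\infty=\a$ we have $S_\h(\a)=S_\h(\h_\infty)$, and because $\h$ is a partition of $X$ with $n=|\h|\ge 4$, Theorem~\ref{TheoremMinimumNumberOfSteps} applies verbatim and gives exactly Formula~\ref{FormulaAHMinimumNumberOfSteps}. Thus every bullet follows. The main obstacle is therefore minor: the $1$-minimal-constructibility case analysis above, together with the bookkeeping observation that $|\a|\ge 2^{4}$ is precisely the $n\ge 4$ hypothesis needed to cite Theorem~\ref{TheoremMinimumNumberOfSteps}; the remaining clauses are immediate consequences of Lemmas~\ref{LemmaEquivalenceRelationship}, \ref{LemmaAlgebraClassEquivalence}, \ref{LemmaAlgebraPartitionAlgebra} and \ref{LemmaCharactizeElementsAlgebraPartition}.
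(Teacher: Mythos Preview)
Your proposal is correct and follows essentially the same route as the paper: take $\h=\p$ the unseparability partition, then read off each bullet from Lemmas~\ref{LemmaAlgebraClassEquivalence}, \ref{LemmaAlgebraPartitionAlgebra}, \ref{LemmaCharactizeElementsAlgebraPartition} and Theorem~\ref{TheoremMinimumNumberOfSteps}. The only difference is that where the paper asserts in one line that no partition block is $1$-constructible from the others, you actually supply the (correct) three-case check and make explicit that $|\a|\ge 2^{4}$ is exactly the hypothesis $n\ge 4$ needed both for that complement case and for invoking Theorem~\ref{TheoremMinimumNumberOfSteps}.
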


\begin{proof}
   Consider \,$\h=\p$\, the partition induced by the equivalence relationship given in Lemma~\ref{LemmaEquivalenceRelationship}. By Equation~\ref{EquationFormClassEquivalence} given in Lemma~\ref{LemmaAlgebraClassEquivalence}, any member of the partition is a member of  \sa as it is the union of elements of \saa. Hence necessarily \,$\h\sub\a$\, and then \sh is also finite. Consider $n=|\h|.$  Lemma~\ref{LemmaAlgebraPartitionAlgebra} implies that $\h_{\infty} = \a$ and Lemma~\ref{LemmaCharactizeElementsAlgebraPartition} implies that $|\hi|=2^n.$ Hence $\left|\h\right| = \log_{2} \left|\a\right|$.  Our hypotheses imply that $n\ge 4,$ therefore we can apply Theorem~\ref{TheoremMinimumNumberOfSteps} to the family \,$\u=\h$\,  to find that the number $S_{\u}(\ui)=S_{\h}(\a)$ satisfies Formula~\ref{FormulaAHMinimumNumberOfSteps}.

   It remains to show that \sh is 1-minimal constructible. Since any element of the partition can not be constructed in one step from the remaining others, we have that if $A_i\in\h$ then  \,$\h\not\sub C_1\left(\h-\{A_i\}\right).$\,  Hence, by  Lemma~\ref{lmmremove1element},  $\h$ is 1-minimal constructible.
\end{proof}

\begin{theorem}\label{thmonnmc}
   Let \sa be a finite algebra of subsets of $X$. Then there exists a family $\h$ of subsets of $X$ such that
   \begin{itemize}
      \item $\h \sub \a$
      \item For all $n\ge 1, \ \,\h$ is $n$-minimal constructible
      \item $\h_{\infty} = \a$
   \end{itemize}
\end{theorem}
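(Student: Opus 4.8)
The plan is to take $\h$ to be the partition $\p$ of $X$ induced by the unseparability equivalence relation of Lemma~\ref{LemmaEquivalenceRelationship}, but with one block removed. Recall (from the proof of Theorem~\ref{thmon1mc}, via Lemmas~\ref{LemmaAlgebraClassEquivalence} and~\ref{LemmaAlgebraPartitionAlgebra}) that $\p\sub\a$, that $\p$ is finite, and that $\p_\infty=\a$. The partition $\p$ itself is $1$-minimal constructible but, as Example~\ref{ex1} illustrates, it is usually not even $2$-minimal constructible: any block $A$ of $\p$ equals the complement of the union of the remaining blocks, so $A\in C_k(\p-\{A\})$ for $k$ roughly $\log_2|\p|$. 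The fix is to discard exactly one block. So: if $\p=\{X\}$ (equivalently $\a=\{\emptyset,X\}$), put $\h=\{X\}$; otherwise write $\p=\{A_1,\dots,A_m\}$ with $m\ge2$ and put $\h=\p-\{A_m\}=\{A_1,\dots,A_{m-1}\}$.

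The first and third bullets are then immediate. We have $\h\sub\p\sub\a$. Since $A_m=X-\bigcup_{j=1}^{m-1}A_j$ is a finite union followed by a complement, $A_m\in\hi$, so $\p\sub\hi$; as $\hi$ is an algebra (Lemma~\ref{LemmaUiIsAlgebra}), Lemma~\ref{LemmaAlgebraContains} gives $\a=\p_\infty\sub\hi$, while $\hi\sub\a$ holds because $\h\sub\a$ and $\a$ is an algebra; hence $\hi=\a$. (In the degenerate case $\h=\{X\}$ one has $\h_1=\{X,\emptyset\}=\a$ directly.)

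The real content is the middle bullet: $\h$ is $n$-minimal constructible for every $n\ge1$. By Lemma~\ref{lmmremove1element} it suffices, for each block $A_i\in\h$, to show $\h\not\sub C_n(\h-\{A_i\})$; in fact I will show the single set $A_i$ cannot be reached, i.e. $A_i\notin(\h-\{A_i\})_\infty$, which settles all $n$ at once since $C_n(\h-\{A_i\})\sub(\h-\{A_i\})_\infty$. The key is that $\h-\{A_i\}$ lives inside a \emph{coarser} partition algebra: let $\p'$ be the partition of $X$ obtained from $\p$ by merging the two blocks $A_i$ and $A_m$ into the single block $A_i\cup A_m$. Every set of $\h-\{A_i\}$ is a block of $\p'$, so $\h-\{A_i\}\sub\p'_\infty$; since $\p'_\infty$ is an algebra (Lemma~\ref{LemmaUiIsAlgebra}), Lemma~\ref{LemmaAlgebraContains} gives $C_n(\h-\{A_i\})\sub(\h-\{A_i\})_\infty\sub\p'_\infty$. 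But by Lemma~\ref{LemmaCharactizeElementsAlgebraPartition} the members of $\p'_\infty$ are exactly the unions of blocks of $\p'$, and $A_i$ is not one of them: it is a nonempty proper subset of the block $A_i\cup A_m$, proper because the $\p$-block $A_m$ is nonempty and disjoint from $A_i$. Hence $A_i\notin\p'_\infty$, so $A_i\notin C_n(\h-\{A_i\})$ and $\h\not\sub C_n(\h-\{A_i\})$, as required. (The argument still reads correctly when $\h-\{A_i\}=\emptyset$, i.e. $m=2$: then $\p'=\{X\}$, $\p'_\infty=\{X,\emptyset\}$, and $A_i$ remains a nonempty proper subset of $X$.)

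The one genuine obstacle is to recognize that the partition $\p$ must be trimmed and to spot the coarsening trick that proves irredundancy; everything else is bookkeeping with lemmas already in hand. It is worth noting that this $\h$ differs from the one in Theorem~\ref{thmon1mc}: removing a block forfeits the clean formula for $S_\h(\a)$ and the equality $|\h|=\log_2|\a|$ (it becomes $|\h|=\log_2|\a|-1$), but it gains the stronger property of being $n$-minimal constructible for all $n$.
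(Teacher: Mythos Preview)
Your proof is correct and follows essentially the same strategy as the paper's: take the unseparability partition $\p$, drop one block, and show $n$-minimal constructibility by observing that $\h-\{A_i\}$ sits inside the algebra generated by the coarser partition obtained by merging $A_i$ with the removed block $A_m$. The only cosmetic difference is that the paper finishes via separability---picking $x\in A_i$, $y\in A_m$ and arguing they are unseparable in $C_\infty(\h-\{A_i\})$ yet separated by $A_i\in\h$---whereas you argue directly that $A_i$ is not a union of blocks of $\p'$; these are two phrasings of the same fact from Lemma~\ref{LemmaCharactizeElementsAlgebraPartition}.
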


\begin{proof}
   As in the previous proof, consider  the partition \,$\p=\{A_1,A_2,...,A_k\}$\, induced by the equivalence relationship given in Lemma~\ref{LemmaEquivalenceRelationship}. If $k=1$, clearly \,$\h=\p$\, satisfies the conclusion of the theorem just as in the previous proof as any proper subfamily of \sh is empty. Similarly, if $k=2,$ consider \,$\h=\p-\{A_2\}.$ Clearly \sh satisfies \,$\h \sub \a, \, \hi = \p_\infty=\a$ and \sh is also $n$-minimal constructible as any proper subfamily is empty. Suppose $k>2$\, and consider \,$\h=\p-\{A_k\}.$ Clearly \sh satisfies \,$\h \sub \a$\, and  \,$\hi = \p_\infty.$\,

   It only remains to check that \sh is $n$-minimal constructible.  Consider $A_i\in\h,\ x\in A_i$ and $y\in A_k.$ Suppose that $\h\sub C_n\left(\h-\{A_i\}\right).$ \, It is not hard to see that if $B_1,...,B_{k-2}$ are the sets in $\p-\{A_i,A_k\}$ and $B_{k-1}=A_i\cup A_k$ then $\{B_1,B_2,...,B_{k-1}\}$ is a partition of $X$ and
   \begin{align}\label{EquationCinftyPartition}
      C_\infty\left(\h-\{A_i\}\right)=C_\infty\left(\p-\{A_i,A_k\}\right)=C_\infty\left(\{B_1,B_2,...,B_{k-1}\}\right).
   \end{align}
   Since $x,y$ are unseparable in $\{B_1,B_2,...,B_{k-1}\}$,  Lemma~\ref{LemmaCharactizeElementsAlgebraPartition} implies that $x,y$ are unseparable in $C_\infty\left(\{B_1,B_2,...,B_{n-1}\right\}$.\, Equation~\ref{EquationCinftyPartition} implies that $x,y$ are unseparable in $C_\infty\left(\h-\{A_i\}\right)$ and hence they are unseparable in a smaller family such as $C_n\left(\h-\{A_i\}\right)$ or even \shh; however, clearly $x,y$ are separable in $\h$ as $x\in A_i, y\notin A_i$ and $A_i\in\h.$ Therefore the relationship $\h\sub C_n\left(\h-\{A_i\}\right)$ is impossible. Lemma~\ref{lmmremove1element} shows now that \sh is $n$-minimal constructible.
\end{proof}


\section{Future Work and Open Questions} 
\label{SectionFutureWork}

   \indent There are a number of applications to research on constructible sets. In particular, \cite{Aprena2011} has examined some uses of the properties of set algebras in the context of economics; in this work, information can be modeled in terms of families of sets, while the exchange of information takes place through the mechanisms of union, intersection and complement. It is also possible that there exist applications in the areas of data storage and computer science; much of our work has been on generating large, complicated set algebras from small, simple families. This idea of finding simple generating families may have some use in data compression; a family representing some information could be replaced by a family which is 1- or $n$-minimal constructible.\\
   \indent There remain a number of open questions and possible areas of future work raised by our research. In particular, we have not arrived at a general formula for counting the number of $n$-minimal constructible families for a given finite universe.  Connections between this counting problem and the conjecture of Frankl (see \cite{Bosnjak2008}) ought to be examined as well.\\
   \indent In addition to continuing to examine the finite case, more work in infinite families is necessary. Separability is a property that needs to be studied in the infinite case. In the finite case separability implies \,$\ui=2^X$ but in the infinite case, this is not the case because even though $\R$ is separable in \,$\u=$ usual topology, we do not have \,$\ui=2^\R$ since $\Q\notin\ui.$ In the upcoming paper~\cite{GaBaMoBoPoCe} we   count the number of $1$-minimal constructible families and we  compute the number of $n$-minimal constructible families under some restrictions; however, a general formula needs to be found. Counting the families \sh of smallest size that are $n$-minimal constructible and generate an algebra \sa is also an interesting problem to be approached.

\end{document}